\providecommand{\U}[1]{\protect \rule{.1in}{.1in}}
\newtheorem{theorem}{Theorem}[section]
\newtheorem{corollary}{Corollary}[section]
\newtheorem{lemma}{Lemma}[section]
\newtheorem{proposition}{Proposition}[section]
\newtheorem{remark}{Remark}[section]
\def\sup{\mathop{\rm sup}}
\begin{document}

\title{$L^p$ estimates for fully coupled FBSDEs with jumps {\footnote {The work has been supported by the NSF of
P.R.China (No. 11071144, 11171187, 11222110), Shandong Province
(No. BS2011SF010, JQ201202), SRF for ROCS (SEM), supported by Program for New Century Excellent Talents in University (NCET, 2012), 111 Project (No.
B12023).}}}
\author{Juan Li\\{\small School of Mathematics and Statistics, Shandong University, Weihai,
Weihai 264209, P. R. China.}\\{\small \textit{E-mail: juanli@sdu.edu.cn}}\\Qingmeng Wei\\{\small School of Mathematics, Shandong University, Jinan 250100, P. R.
China.}\\{\small \textit{E-mail: qingmengwei@gmail.com}}}

\date{January 29, 2013}
\maketitle

\bigskip

\noindent \textbf{Abstract.}  In this paper  we study useful
estimates, in particular $L^p$-estimates, for fully coupled
forward-backward stochastic differential equations (FBSDEs) with
jumps. These estimates are proved at one hand for fully coupled
FBSDEs with jumps under the monotonicity assumption for arbitrary time
intervals and on the other hand for
such equations on small time intervals. Moreover, the well-posedness of this kind of equation is
studied and regularity results are obtained.

\bigskip

\noindent \textbf{Keyword.}
 Fully coupled FBSDEs with
jumps; $L^p$-estimates

\section{{\protect \large {Introduction}}}

General nonlinear backward stochastic differential equations (BSDEs,
for short) driven by a Brownian motion were introduced and studied
by Pardoux, Peng in \cite{PaPe1}. Since that pioneering paper from
1990, the theory of BSDEs has been intensively studied by a lot of
researchers attracted by its various applications, namely in
stochastic control (see Peng \cite{Pe2}), finance (see El Karoui,
Peng and Quenez \cite{ELPeQu}), and the theory of partial
differential equations (PDEs, for short) (see Pardoux, Peng
\cite{PaPe2}, Peng \cite{Pe3}, etc).

The study of BSDEs has led also to generalizations, among them BSDEs
driven by both a Brownian motion and an independent Poisson random
measure (first studied by Tang and Li \cite{TL}) but also fully
coupled forward-backward stochastic differential equations (FBSDEs)
governed by a Brownian motion and such FBSDEs governed by both a
Brownian motion and Poisson random measure.

As concerns the fully coupled FBSDEs driven by a Brownina motion,
they were intensively studied under different assumptions by
different authors. While Ma and Yong \cite{MY} developed under the
assumption of strict ellipticity of the diffusion coefficient of the
forward equation the so-called 4-step scheme for FBSDE, Hu and Peng
\cite{HP}, Peng and Wu \cite{PW} studied FBSDEs under the so-called monotonicity
assumption, while Pardoux and Tang \cite{PaT} used a different condition. All
these three conditions are of different type and not really
comparable. In recent works Ma, Wu, Zhang and Zhang \cite{Ma-Wu-Zhang-Zhang} have studied
fully coupled FBSDE which involve these three types of conditions.

Fully coupled FBSDEs driven by both a Brownian motion and a Poisson
random measure were studied by Wu \cite{W1999}, \cite{W2003} under
the monotonicity condition. For this he extended the arguments of
\cite{HP}, \cite{PW} to the case with jumps.  While in \cite{W1999} he obtained
the existence and the uniqueness for such fully coupled FBSDEs with
jumps, in Wu \cite{W2003} he proved the existence and the uniqueness
of the solution as well as a comparison theorem for fully coupled
FBSDEs with jumps over a stochastic interval.

The main objective of our paper is to study useful estimates, in
particular $L^p$ estimates for fully coupled FBSDEs with jumps which are not the same as $L^p$ estimates for fully coupled FBSDEs driven only by a
Brownian motion, refer to Proposition 3.2, Remark 3.4, and Theorem 3.4. These estimates, particularly
challenging for the case of fully coupled FBSDEs with jumps, have
been already well studied for fully coupled FBSDEs driven only by a
Brownian motion. We refer the reader, in particular, to the paper
\cite{D} by Delarue. His results and estimates for fully coupled
FBSDEs driven only by a Brownian motion over a sufficiently small time interval were extended by Li
and Wei \cite{LW} to controlled fully coupled FBSDEs in the frame of
their study of an optimal stochastic control problem with coupling
between the controlled forward and the controlled backward equation, while, in particular, the diffusion coefficient of the forward equation $\sigma$\ depends on $z$.
In the frame of their studies they proved some new $L^p$-estimates
for fully coupled FBSDEs on small time interval which were crucially
used for the link between the stochastic control problem and the
associated system of PDEs formed by a quasi-linear
Hamilton-Jacobi-Bellman (HJB, for short) equation and an algebraic equation.

Inspired by the control problems studied by \cite{BLH}, \cite{LP} and \cite{LW}, Li, Wei \cite{LiWei-SDG} have investigated
recently stochastic differential games defined through fully coupled
FBSDEs with jumps. These studies have required specific types of non-trivial $L^p$-estimates for fully coupled FBSDE with jumps, which
have also their own interest. They extend former results for coupled
FBSDEs without jumps and are based on rather technical proofs.

In this paper, we first study $L^2$-estimates (Proposition
\ref{pro6.1}) and $L^p$-estimates (Proposition
\ref{pro6.2}) for fully coupled FBSDEs with
jumps under the monotonicity condition. In our proofs we use a new
method, in particular in the proof of Proposition \ref{pro6.2}; the
estimates (\ref{ee6.1111}) and (\ref{ee6.11111}) concerning the jump
martingale part turn out to be crucial for other estimates in this
work.

In the second part of our paper, assuming the Lipschitz coefficients
with respect to $z$ and $k$ of the diffusion coefficient and the
coefficient in the jump integral to be sufficiently small, we first
prove the existence and uniqueness (Theorem \ref{pro6.3}) of the
solution of fully coupled FBSDEs with jumps on a small time interval
and also a generalized Comparison Theorem (Theorem \ref{th6.2}).
Then we derive the $L^p$-estimates (Theorem \ref{pro6.4}) for fully
coupled FBSDEs with jumps on the small time interval. This second
part provides estimates which turn out to be crucial in the study of
stochastic differential games and for the study of the existence of
the viscosity solution for the associated second order
integral-partial differential equation of Isaacs' type over an
arbitrary time interval, combined with an algebraic equation; see
\cite{LiWei-SDG}. Of course, the results of our paper can be also
applied to the study of other problems, as for instance, the optimal
control problems and the stochastic maximum principle of fully
coupled FBSDEs with jumps.

This paper is organized as follows: In Section 2 we recall some
preliminaries for  fully coupled FBSDEs with jumps, which will be
used later. In Section 3, on one hand, we prove some basic estimates
for fully coupled FBSDEs with jumps under monotonicity condition, on
the other hand,  assuming the Lipschitz coefficients of $\sigma, \
h$ with respect to $z,\ k$ to be sufficiently small, we establish
the well-posedness result and a generalized Comparison Theorem for
fully coupled FBSDEs with jumps on a small time interval. The
associated $L^p$-estimates ($p\geq2$) are then
derived.

\section{ {\protect \large Preliminaries}}

 Let $(\Omega, {\mathcal{F}}, \{\mathcal {F}_t\}_{t\geq 0}, P)$ be a complete probability space, where $\mathbb{F}=\{\mathcal {F}_t\}_{t\geq 0}$ is a natural filtration generated by the following two mutually independent processes, and completed by all $P$-null sets:

(i) a $d$-dimensional standard Brownian motion $\{B_t\}_{t\geq 0}$;

(ii) a Poisson random measure $\mu$ on $\mathbb{R}^+\times E$, where $E=\mathbb{R}^{l}\backslash \{0\}$ is equipped
with its Borel $\sigma$-field $\mathcal{B}(E)$, with the compensator $\hat{\mu}(dt, de)=dt\lambda(de)$ such that $\{\tilde{\mu}((0,t]\times A)=(\mu-\hat{\mu})((0,t]\times A)\}_{t\geq 0}$ being a martingale for all
$A\in \mathcal {B}(E)$ satisfying $\lambda(A) < \infty$.
  Here $\lambda$ is assumed to be a
$\sigma$-finite L\'{e}vy measure on $(E,\mathcal{B}(E))$ with the property
that $\int_{E}(1\wedge|e|^{2})\lambda(de)<\infty.$

For any $n\geq1,\ |z|$ denotes the Euclidean norm of $z\in \mathbb{R}^{n}.$  Fix $T>0$, and $[0, T]$ is called the time duration. Now we give some spaces of processes which will be used later:
\begin{itemize}
\item $\mathcal{M}^{2}(t,T;\mathbb{R}^{d}):=\Big \{ \varphi \mid \varphi
:\Omega \times[t,T]\rightarrow \mathbb{R}^{d} \mbox{ is an }\mathbb{F}
\mbox{-predictable process}:\ \parallel \varphi \parallel^{2}=E[\int^{T}_{t} |\varphi_{s}%
|^{2}ds]<+\infty \Big \}; $

\item ${\mathcal{S}}^{2}(t,T;\mathbb{R}):=\Big \{ \psi \mid \psi:\Omega
\times[t,T]\rightarrow \mathbb{R} \mbox{ is an } \mathbb{F}
\mbox{-adapted c\`{a}dl\`{a}g process}:\ E[\mathop{\rm
sup}\limits_{t\leq s\leq T}| \psi_{s} |^{2}]< +\infty \Big \}; $

\item $\mathcal{K}_{\lambda}^{2}(t,T;\mathbb{R}^{n}):=\Big \{K\mid
K:\Omega \times[t,T]\times E\rightarrow \mathbb{R}^{n} \mbox{ is
}\mathcal{P}\otimes \mathcal{B}(E)-\mbox{measurable}:\newline \mbox{ }\hskip3cm
\parallel K\parallel^{2}=E[\int_{t}^{T}\int
_{E}|K_{s}(e)|^{2}\lambda(de)ds]<+\infty \Big
\},$
\end{itemize}
where $t\in[0,T].$ Here  $\mathcal {P}$ denotes the $\sigma$-field of
$\mathbb{F}$-predictable subsets of $\Omega\times [0,T].$

\subsection{{\protect \large Fully coupled FBSDEs with jumps}}

 Now we consider the following fully coupled FBSDE with
jumps associated with $(b, \sigma, h, f, \zeta, \Phi)$\ on the time interval $[t, T]$ ($t\in [0, T]$):
\begin{equation}
\left \{
\begin{array}
[c]{llll}%
dX_{s} & = & b(s,X_{s},Y_{s},Z_{s},K_{s})ds+\sigma(s,X_{s},Y_{s},Z_{s}%
,K_{s})dB_{s} +\int_{E}h(s,X_{s-},Y_{s-},Z_{s},K_{s}(e),e)\tilde{\mu
}(dsde), & \\
dY_{s} & = & -f(s,X_{s},Y_{s},Z_{s},\int_{E}K_{s}(e)l(e)\lambda(de))ds+Z_{s}dB_{s}+\int_{E}K_{s}(e)\tilde{\mu
}(dsde),\  \  \ s\in \lbrack t,T], & \\
X_{t} & = & \zeta, & \\
Y_{T} & = & \Phi(X_{T}), &
\end{array}
\right.  \label{equ2.2}%
\end{equation}
where the solution $(X, Y, Z, K)$\ takes its values in $ \mathbb{R}^{n}\times \mathbb{R}^{m}\times
\mathbb{R}^{m\times d}\times \mathbb{R}^{m}$, and the coefficients
 $$\begin{array}{llll}
&&b:\Omega \times \lbrack0,T]\times \mathbb{R}^{n}\times
\mathbb{R}^{m}\times \mathbb{R}^{m\times d}\times
L^{2}(E,\mathcal{B}(E),\lambda;\mathbb{R}^m)\longrightarrow \mathbb{R}^{n}, \\
&& \sigma
:\Omega \times \lbrack0,T]\times \mathbb{R}^{n}\times \mathbb{R}^{m}%
\times \mathbb{R}^{m\times d}\times L^{2}(E,\mathcal{B}(E),\lambda;\mathbb{R}^m)\longrightarrow \mathbb{R}%
^{n\times d},\\
&& h:\Omega \times \lbrack0,T]\times \mathbb{R}^{n}%
\times \mathbb{R}^{m}\times \mathbb{R}^{m\times d}\times
\mathbb{R}^{m}\times E\longrightarrow \mathbb{R}^{n},\\
&&f:\Omega
\times \lbrack0,T]\times \mathbb{R}^{n}\times \mathbb{R}^{m}\times
\mathbb{R}^{m\times d}\times \mathbb{R}^{m}\longrightarrow
\mathbb{R}^{m}, \end{array}$$
 $l:E\longrightarrow \mathbb{R}$ and $\Phi:\Omega \times \mathbb{R}^{n}%
\longrightarrow \mathbb{R}^{m}$ satisfy
\begin{description}
\item[$( \mathbf{H2.1})$] (i) $b,\  \sigma,\ f$ are uniformly Lipschitz with respect to $(x,y,z,k),$ and there exists $\rho:E\rightarrow
\mathbb{R}^{+}$ with $\int_{E}\rho^{2}(e)\lambda(de)<+\infty$
such that, for any $t\in[0,T],\ x,\bar{x}\in \mathbb{R}^{n},\ y,\bar{y}%
\in \mathbb{R}^{m},\ z,\bar{z}\in \mathbb{R}^{m\times d},\ k,\bar{k}%
\in \mathbb{R}^{m}$ and $e\in E$,
\[
|h(t,x,y,z,k,e)-h(t,\bar{x},\bar{y},\bar{z},\bar{k},e)|\leq \rho(e)(|x-\bar
{x}|+|y-\bar{y}|+|z-\bar{z}|)+C|k-\bar{k}|;
\]
(ii) $k\rightarrow f(t,x,y,z,k)$ is non-decreasing, for all $(t,x,y,z)\in
[0,T]\times \mathbb{R}^{n}\times \mathbb{R}^{m}\times \mathbb{R}^{m\times d}; $

(iii) there exists a constant $C>0$ such that
\[
0\leq l(e)\leq C(1\wedge|e|),\ x\in \mathbb{R}^{n},\ e\in E;
\]
(iv) $\Phi(x)$ is uniformly Lipschitz with respect to $x\in \mathbb{R}^{n};$

(v) for every $(x,y,z,k)\in \mathbb{R}^{n}\times
\mathbb{R}^{m}\times \mathbb{R}^{m\times d}\times \mathbb{R}^{m}, \
\Phi(x)\in L^{2}(\Omega ,\mathcal{F}_{T},P;\mathbb{R}^m)$, $b,\
\sigma,\ h,\ f$ are $\mathbb{F}$-progressively measurable and
\[%
\begin{array}
[c]{ll}%
E\int_{0}^{T}|b(s,0,0,0,0)|^{2}ds+E\int_{0}%
^{T}|f(s,0,0,0,0)|^{2}ds+E\int_{0}^{T}|\sigma(s,0,0,0,0)|^{2}%
ds & \\
+E\int_{0}^{T}\int_{E}|h(s,0,0,0,0,e)|^{2}%
\lambda(de)ds<\infty. &
\end{array}
\]
\end{description}

Let
\[
g(s,x,y,z,k):=f(s,x,y,z,\int_{E}k(e)l(e)\lambda(de)),
\]
$(s,x,y,z,k)\in[0,T]\times \mathbb{R}^{n}\times \mathbb{R}^{m}\times
\mathbb{R}^{m\times d}\times L^{2}(E,\mathcal{B}(E),\lambda;\mathbb{R}).$

In this paper we use the usual inner product and the Euclidean norm in
$\mathbb{R}^{n},\  \mathbb{R}^{m}$ and $\mathbb{R}^{m\times d},$ respectively.
Given an $m \times n$ full-rank matrix $G$, we define:
\[
\pi= \  \left(
\begin{array}
[c]{c}%
x\\
y\\
z
\end{array}
\right)  \ , \  \  \  \  \  \  \  \  \  \ A(t,\pi,k)= \  \left(
\begin{array}
[c]{c}%
-G^{T}g\\
Gb\\
G\sigma
\end{array}
\right)  (t,\pi,k),
\]
where $G^{T}$ is the transposed matrix of $G$.

We assume the following monotonicity conditions:
\begin{description}
\item[$( \mathbf{H2.2})$] {\rm (i)}
 $\begin{array}[c]{llll}&&\langle
A(t,\pi,k)-A(t,\bar{\pi},\bar{k}),\pi-\bar{\pi} \rangle
+\int_{E}\langle G\widehat{h}(e), \widehat{k}(e)\rangle
\lambda(de) \\
&& \leq-\beta_{1}|G\widehat{x}|^{2}-\beta_{2}(  |G^{T} \widehat{y}%
|^{2}+|G^{T} \widehat{z}|^{2})-\beta_{3}\int_{E}|G^{T} \widehat{k}%
(e)|^{2}\lambda(de),
\end{array}
$

(ii) $\langle \Phi(x)-\Phi(\bar{x}),G(x-\bar{x}) \rangle \geq \mu_{1}%
|G\widehat{x}|^{2},\  \forall \pi=(x,y,z),\  \bar{\pi}=(\bar{x},\bar{y},\bar
{z}),\  \widehat{x}=x-\bar{x},\  \widehat{y}=y-\bar{y},\  \widehat{z}=z-\bar
{z},\  \widehat{k}=k-\bar{k},\  \widehat{h}(e)=h(t,\pi,k,e)-h(t,\bar{\pi}%
,\bar{k},e)$, \newline where $\beta_{1},\ \beta_{2},\ \beta_{3},\
\mu_{1}$ are nonnegative constants with $\beta_{1} + \beta_{2}>0,\
\beta_{1} + \beta_{3}>0,\ \beta_{2} + \mu_{1}>0,\ \beta_{3} +
\mu_{1}>0$. Moreover, we have $\beta_{1}>0,\ \mu_{1}>0 \
(\mbox{resp., }\beta_{2}>0,\ \beta_{3}>0)$, when $m>n$ (resp.,
$m<n$).
\end{description}

\begin{remark}
\begin{description}\item[$( \mathbf{H2.2})$-(ii)']  $( \mathbf{H2.2})$ {\rm(ii)} results in the weaker condition:
$\langle \Phi(x)-\Phi(\bar{x}),G(x-\bar{x})
\rangle \geq0$, for all $x,\ \bar{x}\in\mathbb{R}^n.$ \end{description}
\end{remark}

When $\Phi(x)=\xi\in L^{2}(\Omega,\mathcal{F}_{T},P;\mathbb{R}^{m})$, $( \mathbf{H2.2})$-(i) can be weaken as follows:

\begin{description}\item[$( \mathbf{H2.3})$] $\langle
A(t,\pi,k)-A(t,\bar{\pi},\bar{k}),\pi-\bar{\pi} \rangle
+\int_{E}\langle G\widehat{h}(e), \widehat{k}(e)\rangle
\lambda(de)  \leq-\beta_{1}|G\widehat{x}|^{2}-\beta_{2}|G^{T} \widehat{y}
|^{2},$

where $\beta_1,\ \beta_2$ are nonnegative constants with $\beta_1 +
\beta_2>0$. Moreover, we have $\beta_1>0$ (resp., $\beta_2>0$), when
$m>n$ (resp., $m<n$).
\end{description}

\begin{lemma}
\label{l3} Under the assumptions $( \mathbf{H2.1})$ and $(\mathbf{H2.2})$, for
any $\zeta \in L^{2}(\Omega,\mathcal{F}_{t},P;\mathbb{R}^{n})$, FBSDE
(\ref{equ2.2}) has a unique adapted solution $(X_{s},Y_{s},Z_{s},K_{s})_{s
\in[t,T]}\in{\mathcal{S}}^{2}(t, T; {\mathbb{R}^{n}})\times{\mathcal{S}}%
^{2}(t, T; {\mathbb{R}^{m}})\times{\mathcal{M}}^{2}(t, T; {\mathbb{R}^{m\times
d}})\times \mathcal{K}_{\lambda}^{2}(t, T; \mathbb{R}^{m}).$
\end{lemma}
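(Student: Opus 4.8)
This well-posedness statement is, in essence, that of Wu \cite{W1999}, and the plan is to prove it by the method of continuation in a parameter --- the homotopy method of Hu--Peng \cite{HP} and Peng--Wu \cite{PW} --- carried over to the present jump setting. The argument splits according to the sign of $m-n$; I describe the case $m>n$, where $\beta_{1}>0$, $\mu_{1}\ge0$ and $G$ is injective, the case $m<n$ (where $\beta_{2}>0$, $\beta_{3}>0$) being the mirror image obtained by interchanging the forward and the backward components, and $m=n$ being covered by either branch since then $G$ and $G^{T}$ are invertible and $\beta_{1}+\beta_{2}>0$. First note that, since $0\le l(e)\le C(1\wedge|e|)$ and $\int_{E}(1\wedge|e|^{2})\lambda(de)<\infty$, the functional $k\mapsto\int_{E}k(e)l(e)\lambda(de)$ is bounded and linear on $L^{2}(E,\mathcal B(E),\lambda;\mathbb R^{m})$, so $g$ inherits uniform Lipschitz continuity from $f$; together with $( \mathbf{H2.1})$ this makes the standard SDE estimates and the BSDE-with-jumps theory of Tang--Li \cite{TL} available throughout. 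Then I embed (\ref{equ2.2}) into a family $\Gamma_{\alpha}$, $\alpha\in[0,1]$, replacing $(b,\sigma,h,g)$ by $\alpha(b,\sigma,h,g)+(1-\alpha)A_{0}$ for a suitable \emph{linear} operator $A_{0}$ and $\Phi(X_{T})$ by $\alpha\Phi(X_{T})+(1-\alpha)GX_{T}$, and simultaneously allowing arbitrary additive sources $(I^{b},I^{\sigma},I^{h},I^{g})\in\mathcal M^{2}\times\mathcal M^{2}\times\mathcal K^{2}_{\lambda}\times\mathcal M^{2}$ in the equations and $\eta\in L^{2}(\Omega,\mathcal F_{T},P;\mathbb R^{m})$ in the terminal condition. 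The (standard) choice of $A_{0}$ is such that: (a) for every $\alpha\in[0,1]$ the interpolated coefficients still satisfy $( \mathbf{H2.2})$ with the \emph{same} constants $\beta_{1},\beta_{2},\beta_{3},\mu_{1}$; and (b) at $\alpha=0$ the system $\Gamma_{0}$ decouples --- a Lipschitz forward SDE for $X$, then a Lipschitz BSDE with jumps for $(Y,Z,K)$ --- hence is uniquely solvable in $\mathcal S^{2}\times\mathcal S^{2}\times\mathcal M^{2}\times\mathcal K^{2}_{\lambda}$; while $\Gamma_{1}$ with all sources zero is exactly (\ref{equ2.2}).

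The core is an a priori estimate with a step size independent of $\alpha$: there is $\delta_{0}>0$, depending only on the Lipschitz and monotonicity constants, such that whenever $\Gamma_{\alpha_{0}}$ is uniquely solvable for every $\zeta$ and every choice of sources, so is $\Gamma_{\alpha_{0}+\delta}$ for all $\delta\in(0,\delta_{0}]$. Fix such a $\delta$ and define a map $\mathcal T$ on $\mathcal M^{2}\times\mathcal M^{2}\times\mathcal M^{2}\times\mathcal K^{2}_{\lambda}$ by: given $\theta'=(X',Y',Z',K')$, transfer the ``$\delta$-part'' of the $\Gamma_{\alpha_{0}+\delta}$-coefficients evaluated along $\theta'$ into the source terms, solve the resulting $\Gamma_{\alpha_{0}}$-system, and let $\mathcal T(\theta')$ be its solution; a fixed point of $\mathcal T$ solves $\Gamma_{\alpha_{0}+\delta}$. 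To see $\mathcal T$ is a contraction for $\delta$ small, take two inputs $\theta',\bar\theta'$ with outputs $\theta,\bar\theta$ and apply It\^o's formula to $s\mapsto\langle G(X_{s}-\bar X_{s}),\,Y_{s}-\bar Y_{s}\rangle$ on $[t,T]$: the drift together with the jump-compensator terms is precisely the left-hand side of the inequality in $( \mathbf{H2.2})$-(i) evaluated along the two solutions (with $\widehat X=X-\bar X$, $\widehat Y=Y-\bar Y$, $\widehat Z=Z-\bar Z$, $\widehat K=K-\bar K$ and $\widehat h(e)$ the corresponding increment of $h$), so it is bounded above by $-\beta_{1}|G\widehat X|^{2}-\beta_{2}(|G^{T}\widehat Y|^{2}+|G^{T}\widehat Z|^{2})-\beta_{3}\int_{E}|G^{T}\widehat K(e)|^{2}\lambda(de)$ up to source contributions, while the terminal value is at least $\mu_{1}E|G(X_{T}-\bar X_{T})|^{2}\ge0$ by $( \mathbf{H2.2})$-(ii). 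Combining this with the energy identities from It\^o applied to $|G(X_{s}-\bar X_{s})|^{2}$ and to $|Y_{s}-\bar Y_{s}|^{2}$, the Burkholder--Davis--Gundy inequality (to reach the $\mathcal S^{2}$-norms of $X$ and $Y$) and the Lipschitz bound on $h$ with weight $\rho(e)$, one obtains $\|\mathcal T(\theta')-\mathcal T(\bar\theta')\|^{2}\le C\delta\,\|\theta'-\bar\theta'\|^{2}$ on the product space, so $\mathcal T$ contracts once $C\delta<1$; this fixes $\delta_{0}$.

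It then remains to iterate: $\Gamma_{0}$ is solvable by the base case, and $\lceil 1/\delta_{0}\rceil$ applications of the induction step reach $\alpha=1$, i.e. (\ref{equ2.2}), in the stated spaces. Uniqueness falls out of the same computation with vanishing sources: if $(X,Y,Z,K)$ and $(\bar X,\bar Y,\bar Z,\bar K)$ both solve (\ref{equ2.2}) with $X_{t}=\bar X_{t}=\zeta$, then It\^o on $\langle G(X-\bar X),Y-\bar Y\rangle$ gives $\mu_{1}E|G(X_{T}-\bar X_{T})|^{2}\le E[\langle G(X_{T}-\bar X_{T}),\,Y_{T}-\bar Y_{T}\rangle]\le-\beta_{1}E\int_{t}^{T}|G(X_{s}-\bar X_{s})|^{2}\,ds\le0$; since $\beta_{1}>0$ and $G$ is injective this forces $X\equiv\bar X$, hence $Y_{T}=\bar Y_{T}$, and then uniqueness for the Lipschitz BSDE with jumps gives $Y\equiv\bar Y$, $Z\equiv\bar Z$, $K\equiv\bar K$.

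The step I expect to be the main obstacle --- and the only one that genuinely differs from the Brownian case of \cite{HP}, \cite{PW} --- is the a priori control of the jump component $K$: one must bound $\|K-\bar K\|_{\mathcal K^{2}_{\lambda}}$ in terms of the data and the other components, which forces one to use the compensated-Poisson isometry for the backward jump integral and to keep careful track, in the It\^o expansions for $|G\widehat X|^{2}$ and $|\widehat Y|^{2}$, of the two terms $\rho(e)(|\widehat X|+|\widehat Y|+|\widehat Z|)$ and $C|\widehat K(e)|$ supplied by $( \mathbf{H2.1})$-(i) for the increments of $h$. These cannot be made small here (in contrast to the small-time results later in the paper), and it is precisely at this point that the extra structural sign conditions $\beta_{1}+\beta_{3}>0$ and $\beta_{3}+\mu_{1}>0$ in $( \mathbf{H2.2})$ are needed to close the estimate.
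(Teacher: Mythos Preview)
Your proposal is correct and follows exactly the approach the paper relies on: the paper does not give its own proof of this lemma but simply refers the reader to Wu \cite{W1999,W2003}, where precisely the Hu--Peng/Peng--Wu continuation (homotopy) method you outline is carried out in the jump setting. Your sketch, including the monotonicity-based a priori estimate via It\^o's formula applied to $\langle G\widehat X_{s},\widehat Y_{s}\rangle$, the decoupled base case $\alpha=0$, and the uniform step size $\delta_{0}$, is the standard argument in those references.
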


\begin{lemma}
\label{le2.2} Under the assumptions $( \mathbf{H2.2})$-{\rm(ii)'} and $(\mathbf{H2.3})$, for
any $\zeta \in L^{2}(\Omega,\mathcal{F}_{t},P;\mathbb{R}^{n})$ and the terminal condition $\Phi(x)=\xi\in L^{2}(\Omega,\mathcal{F}_{T},P;\mathbb{R}^{m})$, FBSDE
(\ref{equ2.2}) has a unique adapted solution $(X_{s},Y_{s},Z_{s},K_{s})_{s
\in[t,T]}\in{\mathcal{S}}^{2}(t, T; {\mathbb{R}^{n}})\times{\mathcal{S}}%
^{2}(t, T; {\mathbb{R}^{m}})\times{\mathcal{M}}^{2}(t, T; {\mathbb{R}^{m\times
d}})\times \mathcal{K}_{\lambda}^{2}(t, T; \mathbb{R}^{m}).$
\end{lemma}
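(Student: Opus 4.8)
The plan is to prove this by the method of continuation, adapting the argument of Wu \cite{W1999} that underlies Lemma \ref{l3} to the present weaker hypotheses. Note first that Lemma \ref{l3} cannot be quoted directly: when $\Phi\equiv\xi$ the terminal monotonicity required there (with $\mu_1>0$ in the case $m>n$) fails, and (H2.3) does not provide the dissipation in the $\widehat z$- and $\widehat k$-directions that (H2.2)-(i) does, so a self-contained continuation argument is needed. For $\alpha\in[0,1]$ I would introduce the family of FBSDEs $(E_\alpha)$ on $[t,T]$ obtained from (\ref{equ2.2}) by replacing $(b,\sigma,h,g)$ with their convex combinations $\alpha(b,\sigma,h,g)+(1-\alpha)(b_0,\sigma_0,h_0,g_0)$, adding generic $L^2$ forcing terms to the four equations, keeping $X_t=\zeta$, and allowing an arbitrary terminal datum $Y_T=\eta\in L^2(\Omega,\mathcal F_T,P;\mathbb R^m)$. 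Here $(b_0,\sigma_0,h_0,g_0)$ is the standard linear base (retaining only a strictly monotone term of type $-\beta_1 G^TGy$ when $m>n$, resp. $-\beta_2 GG^Tx$ when $m<n$, either when $m=n$, and no genuine forward--backward coupling otherwise), chosen so that $(E_0)$ decouples into a forward SDE followed by a Lipschitz jump BSDE, hence uniquely solvable by the classical theory recalled in Section 2, while every $A^\alpha$ still satisfies (H2.3) with constants bounded below, uniformly in $\alpha$, by the given positive ones.

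The core step is the uniform a priori estimate. Given two solutions $(X^i,Y^i,Z^i,K^i)$, $i=1,2$, of $(E_\alpha)$, I would apply It\^o's formula to $s\mapsto\langle G(X^1_s-X^2_s),\,Y^1_s-Y^2_s\rangle$ on $[t,T]$; writing $\widehat{\,\cdot\,}$ for the differences and using (H2.3), the Lipschitz bounds, the It\^o isometry for the compensated jump integrals, and the Burkholder--Davis--Gundy inequality to bound and absorb the $\widehat Z$- and $\widehat K$-martingale contributions, one arrives at
\[
E\!\int_t^T\!\big(\beta_1|G\widehat X_s|^2+\beta_2|G^T\widehat Y_s|^2\big)\,ds\ \le\ C\,\big(\|\widehat\zeta\|^2+\|\widehat\eta\|^2+\text{(forcing differences)}\big),
\]
with $C$ independent of $\alpha$. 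Then, distinguishing $m>n$ (where $\beta_1>0$ and $G$ has trivial kernel, so $\widehat X$ is controlled and $\widehat Y,\widehat Z,\widehat K$ follow from the forward SDE), $m<n$ (where $\beta_2>0$ and $G^T$ is injective, so $\widehat Y$ is controlled first), and $m=n$ (where $G$ is invertible and $\beta_1+\beta_2>0$), one upgrades this to a full stability estimate on $\mathcal S^2\times\mathcal S^2\times\mathcal M^2\times\mathcal K^2_\lambda$, again with $C$ uniform in $\alpha$; in particular $(E_\alpha)$ has at most one solution.

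With this in hand the continuation is routine: there is $\delta_0>0$, depending only on the Lipschitz and monotonicity constants, so that whenever $(E_{\alpha_0})$ is uniquely solvable for all admissible data, the same holds for $(E_{\alpha_0+\delta})$, $\delta\in[0,\delta_0]$ --- freeze $(X,Y,Z,K)$ in the $\alpha$-perturbation, solve the resulting $(E_{\alpha_0})$-problem, and check via the estimate above that the solution map is a contraction. Since $(E_0)$ is solvable, iterating $\lceil 1/\delta_0\rceil$ times reaches $(E_1)$; applying it with zero forcing, initial value $\zeta$ and terminal value $\eta=\xi$ yields existence, and the a priori estimate yields uniqueness.

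The hard part will be the a priori estimate under the weakened hypotheses: because (H2.3) no longer makes the terms $|G^T\widehat z|^2$ and $\int_E|G^T\widehat k(e)|^2\lambda(de)$ available on the dissipative side, the $\widehat Z$- and $\widehat K$-contributions thrown off by It\^o's formula must be recovered from the backward equation and the jump-martingale estimates and then absorbed, which requires the kind of careful bookkeeping for the jump part that recurs throughout the paper; and since the terminal data carry no monotonicity, the whole estimate has to close on the generator monotonicity alone, which is precisely what forces the separate treatment of the three sign regimes of $m-n$ through the rank properties of $G$ and $G^T$.
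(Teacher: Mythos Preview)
The paper gives no proof of this lemma; immediately after its statement it simply sends the reader to Wu \cite{W1999,W2003}, whose argument is precisely the method of continuation you outline. So your approach coincides with the one the paper invokes.

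A few minor imprecisions in your sketch are worth flagging, though none is fatal. First, the base coefficient you write as ``$-\beta_1 G^TGy$'' is dimensionally off: for $m>n$ one takes $g_0(x)=\beta_1 Gx$ (so $-G^T g_0=-\beta_1 G^TGx$ furnishes the $-\beta_1|G\widehat x|^2$ term), and for $m<n$ one takes $b_0(y)=-\beta_2 G^Ty$; with these choices the $\alpha=0$ system decouples as you want. Second, in the case $m>n$, once $\widehat X$ is controlled it is the \emph{backward} equation, via standard Lipschitz BSDE estimates, that yields $\widehat Y,\widehat Z,\widehat K$---not the forward SDE. Third, your closing worry is slightly misplaced: the cross terms $\langle G\widehat\sigma,\widehat Z\rangle$ and $\int_E\langle G\widehat h(e),\widehat K(e)\rangle\,\lambda(de)$ thrown off by It\^o's formula are exactly the left-hand side of (H2.3) and are therefore absorbed by the assumed inequality without any further work; what (H2.3) genuinely fails to give is dissipation \emph{in} $\widehat Z$ and $\widehat K$, so those components must be recovered afterwards from the BSDE (and, in the $m<n$ case, $\widehat X$ from the forward SDE), just as your case split describes.
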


For the proof, the reader can refer to Wu \cite{W1999, W2003}.

\section{{\protect \large {Regularity results for solutions of fully coupled
FBSDEs with jumps}}}

 In this section we will study some important estimates for solutions of fully coupled FBSDEs with jumps.
\subsection{{\protect \large {Regularity results under the monotonicity
condition}}}

 First, we derive some useful estimates for the solutions
under the monotonicity condition.

Let now be given the mappings
$$
\begin{array}
[c]{ll}
& \! \! \! \! \! b: \Omega \times[0,T] \times \mathbb{R}^{n} \times \mathbb{R}
\times \mathbb{R}^{d}\times L^{2}(E,\mathcal{B}(E),\lambda;\mathbb{R}) \longrightarrow \mathbb{R}^{n},\\
& \! \! \! \! \!  \sigma: \Omega \times[0,T] \times \mathbb{R}^{n}
\times \mathbb{R} \times \mathbb{R}^{d}\times L^{2}(E,\mathcal{B}(E),\lambda;\mathbb{R}) \longrightarrow
\mathbb{R}^{d},\\
& \! \! \! \! \! h: \Omega \times[0,T] \times \mathbb{R}^{n} \times \mathbb{R}
\times \mathbb{R}^{d}\times \mathbb{R} \longrightarrow \mathbb{R}^{n},\\
& \! \! \! \! \!  g: \Omega \times[0,T] \times \mathbb{R}^{n} \times \mathbb{R}
\times \mathbb{R}^{d}\times \mathbb{R} \longrightarrow \mathbb{R},
\end{array}
$$ and  $\Phi: \Omega \times \mathbb{R} \longrightarrow
\mathbb{R}$ satisfying $( \mathbf{H2.1}), \ ( \mathbf{H2.2}),$ and also assume
\begin{description}
\item[$( \mathbf{H3.1})$] For any $t\in[0,T],$ for any $(x,y,z,k)\in
\mathbb{R}^{n}\times \mathbb{R}\times \mathbb{R}^{d}\times L^{2}(E,\mathcal{B}(E),\lambda;\mathbb{R}),\  \mbox{P-a.s.},$
$$
 |b(t,x,y,z,k)|+|\sigma(t,x,y,z,k)|+ |g(t,x,y,z,k)|+|\Phi(x)| \leq L(1+|x|+|y|+|z|+|k|), $$
and there exists a measurable function $\rho: E\rightarrow
\mathbb{R}^{+}$ with $\int _{E}\rho^{2}(e)\lambda(de)<+\infty$ such
that, for any $t\in
[0,T],\ (x,y,z,$ $k)\in \mathbb{R}^{n}\times \mathbb{R}\times \mathbb{R}^{d}%
\times \mathbb{R}$ and $e\in E$,
$$
|h(t,x,y,z,k,e)|\leq \rho(e)(1+|x|+|y|+|z|+|k|).
$$
\end{description}

We consider the following fully coupled FBSDE with jumps, parameterized by the
initial condition $(t,\zeta) \in[0,T] \times L^{2}(\Omega,\mathcal{F}%
_{t},P;\mathbb{R}^{n}): $
\begin{equation}
\label{equ6.1}\left \{
\begin{array}
[c]{llll}%
dX_{s}^{t,\zeta} & = & b(s,\Pi_{s}^{t,\zeta},K_{s}^{t,\zeta})ds + \sigma
(s,\Pi_{s}^{t,\zeta},K_{s}^{t,\zeta}) dB_{s}+\int_{E}%
h(s,\Pi_{s-}^{t,\zeta},K_{s}^{t,\zeta}(e),e) \tilde{\mu}(dsde), & \\
dY_{s}^{t,\zeta} & = & -g(s,\Pi_{s}^{t,\zeta},K_{s}^{t,\zeta})ds +
Z_{s}^{t,\zeta}dB_{s}+\int_{E}K_{s}^{t,\zeta}(e)\tilde{\mu
}(dsde), \  \  \  \  \ s\in[t,T], & \\
X_{t}^{t,\zeta} & = & \zeta, & \\
Y_{T}^{t,\zeta} & = & \Phi(X_{T}^{t,\zeta}), &
\end{array}
\right.
\end{equation}
where we have put  $\Pi_s^{t,\zeta}=(X_s^{t,\zeta},Y_s^{t,\zeta},Z_s^{t,\zeta})$,
and $\Pi_{s-}^{t,\zeta}=(X^{t,\zeta}_{s-}, Y^{t,\zeta}_{s-},
Z_s^{t,\zeta})$.

\begin{proposition}
\label{pro6.1} Under the assumptions $( \mathbf{H2.1}),\ ( \mathbf{H2.2}),\ (
\mathbf{H3.1})$, for any $0 \leq t \leq T$ and any associated initial states
$\zeta,\  \zeta^{\prime}\in L^{2}(\Omega,\mathcal{F}_{t},P;\mathbb{R}^{n}),$ we
have the following estimates, \mbox{P-a.s.}:
$$
\begin{array}
[c]{llll}%
\mathrm{(i)} & E[\mathop{\rm sup}\limits_{t\leq s\leq T}|{X}_{s}^{t,\zeta}%
-{X}_{s}^{t,\zeta^{\prime}}|^{2}+\mathop{\rm sup}\limits_{t\leq s\leq T}%
|{Y}_{s}^{t,\zeta}-{Y}_{s}^{t,\zeta^{\prime}}|^{2} + \int
_{t}^{T}|{Z}_{s}^{t,\zeta}-{Z}_{s}^{t,\zeta^{\prime}}|^{2}ds &  & \\
& +\int_{t}^{T}\int_{E}|{K}_{s}^{t,\zeta}%
(e)-{K}_{s}^{t,\zeta^{\prime}}(e)|^{2}\lambda(de)ds\mid \mathcal{F}_{t}] \leq
C|\zeta- \zeta^{\prime}|^2, &  & \\
\mathrm{(ii)} & E[\mathop{\rm sup}\limits_{t\leq s\leq T}|X_{s}^{t,\zeta}%
|^{2}+\mathop{\rm sup}\limits_{t\leq s\leq T}|Y_{s}^{t,\zeta}|^{2}%
+\int_{t}^{T}|Z_{s}^{t,\zeta}|^{2}ds+\int_{t}%
^{T} \int_{E}|{K}_{s}^{t,\zeta}(e)|^{2}\lambda(de)ds\mid
\mathcal{F}_{t}] \leq C(1 +|\zeta|^{2}). &  &
\end{array}
$$ If $\sigma,\ h$ also satisfy: \newline$( \mathbf{H3.2})$ for any
$t\in[0,T]$,
for any $(x,y,z,k)\in \mathbb{R}^{n}\times \mathbb{R}\times \mathbb{R}^{d}%
\times \mathbb{R},\  \mbox{P-a.s.},$ $|\sigma(t,x,y,z,k)|\leq
L(1+|x|+|y|),\ |h(t,x,y,z,k,e)|\leq \rho(e)(1+|x|+|y|)$, \newline then we can get

{\rm(iii)}\  \ $E[\mathop{\rm sup}\limits_{t\leq s\leq t+\delta}%
|X_{s}^{t,\zeta}-\zeta|^{2}\mid \mathcal{F}_{t}]\leq C\delta(1 +|\zeta
|^{2}),\  \mbox{P-a.s.},\ 0\leq \delta \leq T-t.$
\end{proposition}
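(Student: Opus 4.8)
The plan is to prove (i), (ii) and (iii) in that order, exploiting the monotonicity condition $(\mathbf{H2.2})$ via the standard Itô-to-$G$ device, i.e.\ applying Itô's formula to the inner product $\langle G\widehat{X}_s,\widehat{Y}_s\rangle$ where $\widehat{X}=X^{t,\zeta}-X^{t,\zeta'}$, $\widehat{Y}=Y^{t,\zeta}-Y^{t,\zeta'}$ (and similarly $\widehat{Z},\widehat{K}$). For (i), I would first apply Itô's formula to $\langle G\widehat{X}_s,\widehat{Y}_s\rangle$ on $[t,T]$; the drift terms reassemble into $\langle A(s,\Pi_s,K_s)-A(s,\bar\Pi_s,\bar K_s),\Pi_s-\bar\Pi_s\rangle$ plus the jump term $\int_E\langle G\widehat h(e),\widehat k(e)\rangle\lambda(de)$, while the terminal value is controlled from below by $(\mathbf{H2.2})$-(ii) and the boundary value at $s=t$ is $\langle G(\zeta-\zeta'),\widehat Y_t\rangle$. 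The monotonicity inequality then yields, after taking conditional expectation $E[\,\cdot\mid\mathcal F_t]$, a bound of the form
\[
\beta_1 E\!\!\int_t^T\!|G\widehat X_s|^2ds+\beta_2 E\!\!\int_t^T\!(|G^T\widehat Y_s|^2+|G^T\widehat Z_s|^2)ds+\beta_3 E\!\!\int_t^T\!\!\int_E|G^T\widehat K_s(e)|^2\lambda(de)ds+\mu_1|G(\zeta-\zeta')|^2\le C|\zeta-\zeta'|\,E\big[|G^T\widehat Y_t|\,\big|\,\mathcal F_t\big].
\]

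The second step is to convert these weighted a priori bounds into genuine $L^2$-bounds on $(\widehat X,\widehat Y,\widehat Z,\widehat K)$. This is exactly the place where the case distinction $m>n$ versus $m<n$ versus $m=n$ in $(\mathbf{H2.2})$ enters: when $m>n$ one has $\beta_1,\mu_1>0$ and $G$ has full row rank $n$, so $|G\widehat X|^2\gtrsim|\widehat X|^2$ controls the forward component, and one then feeds this into the BSDE for $\widehat Y$ (standard BSDE-with-jumps a priori estimates, using the Lipschitz property of $g$ from $(\mathbf{H2.1})$ and the Lipschitz bound on $h$ with the integrable kernel $\rho$) to recover $\widehat Y,\widehat Z,\widehat K$; when $m<n$ one has $\beta_2,\beta_3>0$ and $G^T$ has full column rank, so the backward components are controlled directly and $\widehat X$ is recovered from the forward SDE via Gronwall. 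After the square-integrability is in hand, the supremum bounds $E[\sup_s|\widehat X_s|^2\mid\mathcal F_t]$ and $E[\sup_s|\widehat Y_s|^2\mid\mathcal F_t]$ follow from the Burkholder--Davis--Gundy inequality applied to the Brownian and the compensated-Poisson martingale parts of each equation. Statement (ii) is proved by exactly the same scheme applied to $(X^{t,\zeta},Y^{t,\zeta},Z^{t,\zeta},K^{t,\zeta})$ itself, using the linear-growth bound $(\mathbf{H3.1})$ in place of the Lipschitz increments (with the convention that the "comparison solution" is $0$ and picking up the additive $1$'s), which produces the $C(1+|\zeta|^2)$ on the right.

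For (iii), with the extra structural assumption $(\mathbf{H3.2})$ that $\sigma$ and $h$ depend on $(x,y)$ only in their growth (no $z,k$ in the bound), I would write, for $s\in[t,t+\delta]$,
\[
X_s^{t,\zeta}-\zeta=\int_t^s b(r,\Pi_r^{t,\zeta},K_r^{t,\zeta})\,dr+\int_t^s\sigma(r,\Pi_r^{t,\zeta},K_r^{t,\zeta})\,dB_r+\int_t^s\!\!\int_E h(r,\Pi_{r-}^{t,\zeta},K_r^{t,\zeta}(e),e)\,\tilde\mu(drde),
\]
take $\sup_{s\le t+\delta}$, apply BDG to the two martingale integrals, and bound the resulting $E\big[\int_t^{t+\delta}(\,\cdot\,)\big|\mathcal F_t\big]$ terms. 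The drift integral contributes $\delta^2$ times a bound involving $E[\sup|X|^2+\sup|Y|^2+\int|Z|^2+\int\!\int|K|^2]$, i.e.\ $\le C\delta^2(1+|\zeta|^2)$ by (ii); the diffusion and jump integrals, thanks to $(\mathbf{H3.2})$, contribute only $E\int_t^{t+\delta}(1+|X_r|^2+|Y_r|^2)\,dr\le C\delta(1+|\zeta|^2)$ — crucially \emph{without} the $\int|Z|^2$ and $\int\!\int|K|^2$ terms, which are merely $L^1$ in time and would not yield a factor $\delta$. Collecting the estimates gives $E[\sup_{t\le s\le t+\delta}|X_s^{t,\zeta}-\zeta|^2\mid\mathcal F_t]\le C\delta(1+|\zeta|^2)$ for $0\le\delta\le T-t$.

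The main obstacle is the first conversion step in (i)/(ii): the monotonicity inequality only delivers control of the $G$-weighted quantities $|G\widehat X|$, $|G^T\widehat Y|$, $|G^T\widehat Z|$, $|G^T\widehat K|$, and because the sign pattern of $\beta_1,\beta_2,\beta_3,\mu_1$ differs in the three dimension regimes, one must run genuinely different arguments (forward-first vs.\ backward-first) to bootstrap from these degenerate bounds to full bounds on all four components, each time closing the loop with the classical Brownian-plus-jump BSDE/SDE a priori estimates and a Gronwall argument. Handling the jump martingale terms carefully — in particular using $\int_E\rho^2(e)\lambda(de)<\infty$ to absorb the $h$-contributions and keeping track of the compensator — is the technically delicate part, but it is routine once the scheme is set up.
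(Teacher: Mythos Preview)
Your scheme is essentially the paper's: It\^o on $\langle G\widehat X_s,\widehat Y_s\rangle$ combined with $(\mathbf{H2.2})$, together with the standard forward/backward a priori estimates (which the paper writes out explicitly first, via It\^o on $|\widehat X_s|^2$ and on $e^{\beta s}|\widehat Y_s|^2$, before invoking the inner product), then BDG for the suprema, and the direct computation you describe for (iii). Two small corrections: in your displayed inequality the $\mu_1$-term should be $\mu_1\,E[\,|G\widehat X_T|^2\mid\mathcal F_t]$ rather than $\mu_1|G(\zeta-\zeta')|^2$ (condition $(\mathbf{H2.2})$-(ii) controls the \emph{terminal} value $\langle G\widehat X_T,\widehat Y_T\rangle$, not the initial one), and since $m=1$ in this section the operative dichotomy is $\beta_2,\beta_3>0$ versus $\beta_2=\beta_3=0$ (the latter forcing $m=n=1$), which is exactly how the paper organizes the case split.
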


\begin{proof}  From Lemma \ref{l3}, we know
there exist the unique solutions $(\Pi^{t,\zeta},K^{t,\zeta})\in{\cal{S}}^2(t, T;
{\mathbb{R}^n})\times{\cal{S}}^2(t, T; {\mathbb{R}})\times
{\cal{M}}^2(t, T; {\mathbb{R}^{d}})$ $\times \mathcal
{K}_\lambda^2(t, T; \mathbb{R}),$\ and
$(\Pi^{t,\zeta'},K^{t,\zeta'})\in{\cal{S}}^2(t, T;
{\mathbb{R}^n})\times{\cal{S}}^2(t, T; {\mathbb{R}})\times
{\cal{M}}^2(t, T; {\mathbb{R}^{d}})\times \mathcal
{K}_\lambda^2(t, T; \mathbb{R})$ for FBSDE (\ref{equ6.1})
associated with $\zeta$ and $\zeta'$, respectively. For convenience, we define
$$\begin{array}{llll}&&\hat{X}_s:=X_s^{t,\zeta}-X_s^{t, {\zeta'}}, \quad \hat{Y}_s:=Y_s^{t,\zeta}-Y_s^{t,{\zeta'}},\quad \hat{Z}_s:=Z_s^{t,\zeta}-Z_s^{t,{\zeta'}},\quad \hat{K}_s:=K_s^{t,\zeta}-K_s^{t,{\zeta'}},\\
&& \Delta
l(s):=l(s,\Pi_s^{t,\zeta},K_s^{t,\zeta})-l(s,\Pi_s^{t,\zeta'},K_s^{t,{\zeta'}}),\ \Delta
h(s,e):=h(s,\Pi_{s-}^{t,\zeta},K_{s}^{t,\zeta}(e),e)-h(s,\Pi_{s-}^{t,\zeta'},K_{s}^{t,{\zeta'}}(e),e),\end{array}$$
where $l=b,\  \sigma, \ g,\ A,$ respectively.\\
Applying It\^{o}'s formula to $|\hat{X}_s|^2$, we obtain from the
Gronwall inequality,
\begin{equation}\label{ee6.1} E[|\hat{X}_s|^2\mid \mathcal{F}_t] \leq
C(|\zeta-\zeta'|^2+E[\int_t^s(|\hat{Y}_r|^2+|\hat{Z}_r|^2+
\int_E|\hat{K}_r(e)|^2\lambda(de))dr|\mathcal {F}_t]),\  t\leq s\leq
T. \end{equation} Then, applying It\^{o}'s formula to $e^{\beta
s}|\hat{Y}_s|^2$, taking $\beta$ large enough, and taking into
account (\ref{ee6.1}), we get
\begin{equation}\label{ee6.3}
\begin{array} [c]{llll}&& E[|\hat{Y}_s|^2\mid \mathcal
{F}_t]+E[\int_s^T|\hat{Y}_r|^2dr+ \int_s^T|\hat{Z}_r|^2dr+\int_s^T
\int_E|\hat{K}_r(e)|^2\lambda(de)dr|\mathcal
{F}_t]\\
&\leq& C|\zeta-\zeta'|^2+CE[\int_t^T(|\hat{Y}_r|^2+|\hat{Z}_r|^2+
\int_E|\hat{K}_r(e)|^2\lambda(de))dr|\mathcal {F}_t],\  t\leq s\leq
T.\end{array}\end{equation} On the other hand, applying It\^{o}'s
formula to $\langle
G\hat{X}_r,\hat{Y}_r\rangle$, from the assumption $( \mathbf{H2.2})$ we get
\begin{equation}\label{ee6.4} \begin{array} [c]{llll} \langle
G\hat{X}_s,\hat{Y}_s\rangle
&=&E[\langle G\hat{X}_T,\hat{Y}_T\rangle \mid \mathcal {F}_s]-
E[\int_s^T(\langle \Delta
A(r),(\hat{X}_r,\hat{Y}_r,\hat{Z}_r)\rangle + \int_E\langle G\Delta
h(r,e),\hat{K}_r(e)\rangle \lambda(de))
dr|\mathcal {F}_s]\\
&\geq&E[\mu_1|G\hat{X}_T|^2\mid \mathcal
{F}_s]+E[\beta_1\int_s^T|G\hat{X}_r|^2dr|\mathcal
{F}_s]\\
&&+E[ \int_s^T\beta_2(|G^T\hat{Y}_r|^2+|G^T\hat{Z}_r|^2)\mid\mathcal {F}_s]+
E[\int_s^T\int_E\beta_3|G^T\hat{K}_r(e)|^2\lambda(de)dr\mid\mathcal {F}_s],
\end{array} \end{equation}
Therefore, $ \langle
G\hat{X}_s,\hat{Y}_s\rangle \geq 0,\ t\leq s\leq T, \  \mbox{P-a.s.}$ \\
If $\beta_2>0,\ \beta_3>0$, then we get
\begin{equation}\label{ee6.5}\begin{array} [c]{llll} \langle
G\hat{X}_t,\hat{Y}_t\rangle &=&E[\langle G\hat{X}_s,\hat{Y}_s\rangle
\mid \mathcal {F}_t]- E[\int_t^s(\langle \Delta
A(r),(\hat{X}_r,\hat{Y}_r,\hat{Z}_r)\rangle+ \int_E\langle G\Delta
h(r,e),\hat{K}_r(e) \rangle \lambda(de))
dr|\mathcal {F}_t]\\
&\geq& \beta_2E[ \int_t^s(|G^T\hat{Y}_r|^2+|G^T\hat{Z}_r|^2)\mid\mathcal {F}_t]+
\beta_3E\int_t^s\int_E|G^T\hat{K}_r(e)|^2\lambda(de)dr\mid\mathcal {F}_t], \ t\leq
s\leq T, \  \mbox{P-a.s.}
\end{array} \end{equation}
Therefore, noticing here $m=1$,\begin{equation}\label{ee6.6}
E[\int_t^s(|\hat{Y}_r|^2+|\hat{Z}_r|^2+
\int_E|\hat{K}_r(e)|^2\lambda(de))dr|\mathcal {F}_t]\leq C\langle
G\hat{X}_t,\hat{Y}_t\rangle,\ t\leq s\leq T, \
\mbox{P-a.s.}\end{equation} Then, from (\ref{ee6.1}) we can get
\begin{equation} \label{ee6.7}E[|\hat{X}_s|^2\mid \mathcal{F}_t] \leq
C|\zeta-\zeta'|^2+ C\langle G\hat{X}_t,\hat{Y}_t\rangle, \ t\leq
s\leq T, \  \mbox{P-a.s.}\end{equation} From (\ref{ee6.3}) we have
\begin{equation} \label{ee6.8}E[|\hat{Y}_s|^2\mid \mathcal
{F}_t]+E[\int_s^T(|\hat{Y}_r|^2+|\hat{Z}_r|^2+
\int_E|\hat{K}_r(e)|^2\lambda(de))dr|\mathcal
{F}_t]
\leq  C|\zeta-\zeta'|^2+ C\langle G\hat{X}_t,\hat{Y}_t\rangle, \
t\leq s\leq T, \  \mbox{P-a.s.}\end{equation} Therefore,
$$\begin{array} [c]{llll} |\hat{Y}_t|^2\leq C|\zeta-\zeta'|^2+
C|\hat{X}_t||\hat{Y}_t|\leq C|\zeta-\zeta'|^2+ C|
\hat{X}_t|^2+{\frac{1}{2}}|\hat{Y}_t|^2,\  \mbox{P-a.s.},
\end{array}$$
which means $|\hat{Y}_t|\leq C|\zeta-\zeta'|,\  \mbox{P-a.s.}$
Then, from (\ref{ee6.7}), (\ref{ee6.8}), we can get
$$\begin{array}{llll} &&E[|\hat{X}_s|^2\mid \mathcal
{F}_t]+E[|\hat{Y}_s|^2\mid \mathcal
{F}_t]+E[\int_s^T(|\hat{Y}_r|^2+|\hat{Z}_r|^2+
\int_E|\hat{K}_r(e)|^2\lambda(de))dr|\mathcal
{F}_t]\\
&\leq & C|\zeta-\zeta'|^2, \  t\leq s\leq T,\
\mbox{P-a.s.}\end{array}$$ If $\beta_2=0,\ \beta_3=0$, then from assumption $(
\mathbf{H2.2})$, we have $\beta_1>0,\
\mu_1>0,\ m=n=1,$ i.e. $G\in \mathbb{R}\setminus\{0\}$.\\
From (\ref{ee6.4}), $$ E[|\hat{X}_T|^2\mid \mathcal
{F}_t]+E[\int_t^T|\hat{X}_r|^2dr|\mathcal {F}_t]\leq CG\hat{X}_t\cdot
\hat{Y}_t,\ C>0.$$ From (\ref{ee6.3}) combined with (\ref{ee6.5}),
$$|\hat{Y}_t|^2+E[\int_t^T(|\hat{Y}_r|^2+|\hat{Z}_r|^2+\int_E|\hat{K}_r(e)|^2\lambda(de))dr|\mathcal
{F}_t]\leq CG\hat{X}_t\cdot \hat{Y}_t\leq
C|\zeta-\zeta'|^2+{\frac{1}{2}}|\hat{Y}_t|^2.$$ Therefore,
$$|\hat{Y}_t|^2+E[\int_t^T(|\hat{Y}_r|^2+|\hat{Z}_r|^2+\int_E|\hat{K}_r(e)|^2\lambda(de))dr|\mathcal
{F}_t]\leq C|\zeta-\zeta'|^2.$$ Furthermore, from (\ref{ee6.1}),$$
E[|\hat{X}_s|^2\mid \mathcal {F}_t]\leq C|\zeta-\zeta'|^2,\ t\leq
s\leq T, \  \mbox{P-a.s.} $$ Therefore,
$$\begin{array} [c]{llll}E[\sup \limits_{t\leq s\leq
T}|\hat{X}_s|^2\mid \mathcal {F}_t]
&\leq& 3|\zeta-\zeta'|^2+ CE[\int_t^T|\Delta b(r)|^2dr+
\int_t^T|\Delta \sigma(r)|^2dr+\int_t^T\int_E|\Delta
h(r,e)|^2\lambda(de)dr|\mathcal
{F}_t] \\
&\leq&
3|\zeta-\zeta'|^2+CE[\int_t^T(|\hat{X}_r|^2+|\hat{Y}_r|^2+|\hat{Z}_r|^2+
\int_E|\hat{K}_r(e)|^2\lambda(de))dr|\mathcal
{F}_t]\\
&\leq& C|\zeta-\zeta'|^2,\  \mbox{P-a.s.};
\end{array} $$
similarly, we have $$\begin{array} [c]{llll}E[\sup \limits_{t\leq
s\leq
T}|\hat{Y}_s|^2\mid \mathcal {F}_t]
&\leq& CE[|\hat{X}_T|^2\mid \mathcal {F}_t]+CE[
\int_t^T(|\hat{X}_r|^2+|\hat{Y}_r|^2+|\hat{Z}_r|^2+
\int_E|\hat{K}_r(e)|^2\lambda(de))dr|\mathcal
{F}_t]\\
&\leq& C|\zeta-\zeta'|^2,\  \mbox{P-a.s.}
\end{array} $$
In this way, we complete the proof of (i). Also, (ii) can be proved
similarly by making full use of the monotonic assumption  $(
\mathbf{H2.2})$. For (iii), similarly, using $( \mathbf{H3.2})$,
$$\begin{array} [c]{llll}&&E[\sup \limits_{t\leq s\leq
t+\delta}|X_s^{t,\zeta}-\zeta|^2\mid \mathcal {F}_t]\\
&\leq& 2E[|\int_t^{t+\delta}
|b(r,X_r^{t,\zeta},Y_r^{t,\zeta},Z_r^{t,\zeta},K_r^{t,\zeta})|dr|^2|\mathcal
{F}_t] + CE[\int_t^{t+\delta}
|\sigma(r,X_r^{t,\zeta},Y_r^{t,\zeta},Z_r^{t,\zeta},K_r^{t,\zeta})|^2dr|\mathcal
{F}_t] \\
&& + CE[\int_t^{t+\delta}\int_E
|h(r,X_{r-}^{t,\zeta},Y_{r-}^{t,\zeta},Z_{r}^{t,\zeta},K_{r}^{t,\zeta},e)|^2\lambda(de)dr|\mathcal
{F}_t]  \\
& \leq & C\delta
E[\int_t^{t+\delta}(1+|X_r^{t,\zeta}|^2+|Y_r^{t,\zeta}|^2+|Z_r^{t,\zeta}|^2+
\int_E|K_r^{t,\zeta}(e)|^2\lambda(de))dr|\mathcal
{F}_t]\\
&&+CE[
\int_t^{t+\delta}(1+|X_r^{t,\zeta}|^2+|Y_r^{t,\zeta}|^2)dr|\mathcal
{F}_t] \\
 & \leq &C\delta E[\sup \limits_{t\leq r\leq
t+\delta}(|X_r^{t,\zeta}|^2+|Y_r^{t,\zeta}|^2)+
\int_t^{t+\delta}(|Z_r^{t,\zeta}|^2+
\int_E|K_r^{t,\zeta}(e)|^2\lambda(de))dr|\mathcal
{F}_t]+C\delta\\
&&+C\delta  E[\sup \limits_{t\leq r\leq
t+\delta}(|X_r^{t,\zeta}|^2+|Y_r^{t,\zeta}|^2)|\mathcal {F}_t]\\
& \leq
&C\delta(1+|\zeta|^2).
\end{array} $$
\end{proof}

\begin{remark}
\label{re6.1} From Proposition \ref{pro6.1}, we have, immediately,
\begin{equation}
|Y_{t}^{t,\zeta}| \leq C(1+|\zeta|);\  \  \  \  \  \  \ |Y_{t}^{t,\zeta}%
-Y_{t}^{t,\zeta^{\prime}}| \leq C|\zeta-\zeta^{\prime}|,\  \  \mbox{P-a.s.},
\end{equation}
where the constant $C>0$ depends only on the Lipschitz constants of
$b,\  \sigma,\ h,\ g \ and \  \Phi.$
\end{remark}

Now we introduce the random field:
\[
u(t,x) = Y_{s}^{t,x}\mid_{s=t}, \  \ (t,x)   \in  [0,T]\times
\mathbb{R}^{n},
\]
where $Y^{t,x}$ is the solution of FBSDE (\ref{equ6.1}) with the initial state
$x\in \mathbb{R}^{n}$.

From Remark \ref{re6.1}, it is easy to check that, for all $t \in
[0,T],\  \mbox{P-a.s.},$
\begin{equation}\label{ee6.10}
\begin{array}
[c]{llll}
&  & \mathrm{(i)}\ |u(t,x)-u(t,y)| \leq C|x-y|, \  \mbox{for} \  \mbox{all}
\ x,y \in \mathbb{R}^{n}; & \\
&  & \mathrm{(ii)} \ |u(t,x)| \leq C(1+|x|), \  \mbox{for} \  \mbox{all}
\ x\in \mathbb{R}^{n}. &
\end{array}
\end{equation}

\begin{remark}
\label{re6.2} Moreover, it is well known that, under the additional assumption
that the functions
\[
b,\  \sigma,\ h,\ g \  \mbox{and} \  \Phi \ are \ deterministic,
\]
 also $u$ is a deterministic function of $(t,x)$.
\end{remark}

The random field $u$ and $Y^{t,\zeta},\ (t,\zeta)\in[0,T]\times L^{2}%
(\Omega,\mathcal{F}_{t},P;\mathbb{R}^{n}),$ are related by the following theorem.

\begin{theorem}
\label{th6.1} Under the assumptions $( \mathbf{H2.1}),\ ( \mathbf{H2.2})$, for
any $t\in[0,T]$ and $\zeta \in L^{2}(\Omega,\mathcal{F}_{t},P;\mathbb{R}^{n}),$
we have
\[
u(t,\zeta)=Y_{t}^{t,\zeta}, \  \  \mbox{P-a.s.}
\]

\end{theorem}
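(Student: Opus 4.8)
The plan is to prove the identity $u(t,\zeta)=Y_t^{t,\zeta}$ first for simple (step) random variables $\zeta$ and then pass to general $\zeta\in L^2(\Omega,\mathcal F_t,P;\mathbb R^n)$ by an approximation argument, using the stability estimate from Proposition \ref{pro6.1}(i) together with the Lipschitz bound in Remark \ref{re6.1}. Concretely, suppose first that $\zeta=\sum_{i=1}^N x_i\mathbf 1_{A_i}$ with $\{A_i\}_{i=1}^N$ a finite $\mathcal F_t$-measurable partition of $\Omega$ and $x_i\in\mathbb R^n$. The key point is that FBSDE (\ref{equ6.1}) has coefficients that do not depend on $\omega$ through $\mathcal F_t$ in a way that couples the pieces $A_i$ together: on each $A_i$ the equation driven by the shifted Brownian motion and Poisson random measure over $[t,T]$ coincides (pathwise, after restricting to $A_i$) with the equation started from the deterministic point $x_i$. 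Hence by uniqueness of the solution (Lemma \ref{l3}) one gets $(\Pi^{t,\zeta}_s,K^{t,\zeta}_s)=\sum_{i=1}^N(\Pi^{t,x_i}_s,K^{t,x_i}_s)\mathbf 1_{A_i}$ on $[t,T]$, and in particular $Y_t^{t,\zeta}=\sum_i Y_t^{t,x_i}\mathbf 1_{A_i}=\sum_i u(t,x_i)\mathbf 1_{A_i}=u(t,\zeta)$, P-a.s.

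For the passage to general $\zeta$, I would choose a sequence of simple $\mathcal F_t$-measurable random variables $\zeta_j\to\zeta$ in $L^2(\Omega,\mathcal F_t,P;\mathbb R^n)$. On one side, Proposition \ref{pro6.1}(i) gives $E|Y_t^{t,\zeta}-Y_t^{t,\zeta_j}|^2\le C\,E|\zeta-\zeta_j|^2\to0$, so $Y_t^{t,\zeta_j}\to Y_t^{t,\zeta}$ in $L^2$. On the other side, $|u(t,\zeta)-u(t,\zeta_j)|\le C|\zeta-\zeta_j|$ by (\ref{ee6.10})(i), so $u(t,\zeta_j)\to u(t,\zeta)$ in $L^2$ as well. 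Since $Y_t^{t,\zeta_j}=u(t,\zeta_j)$ P-a.s. for each $j$ by the simple-variable case, taking limits yields $Y_t^{t,\zeta}=u(t,\zeta)$ P-a.s. (passing, if desired, to an a.s.-convergent subsequence on both sides).

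The only genuine subtlety—and the step I would treat most carefully—is the "localization" claim that the solution restricted to $A_i$ agrees with the solution started from $x_i$. This requires knowing that $u(t,x)=Y_t^{t,x}$ for deterministic $x$ is characterized intrinsically, and that the stochastic integrals and the generator behave correctly under multiplication by the $\mathcal F_t$-measurable indicator $\mathbf 1_{A_i}$ (so that, e.g., $\mathbf 1_{A_i}\int_t^\cdot Z^{t,x_i}_r\,dB_r=\int_t^\cdot \mathbf 1_{A_i}Z^{t,x_i}_r\,dB_r$, which holds because $\mathbf 1_{A_i}$ is $\mathcal F_t$-measurable and hence may be pulled inside the Itô and compensated-Poisson integrals over $[t,T]$). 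Combined with uniqueness from Lemma \ref{l3} applied to the equation with initial datum $\zeta$, this pins down $(\Pi^{t,\zeta},K^{t,\zeta})$ as the announced sum. Everything else is a routine limiting argument resting on the two Lipschitz-type estimates already established.
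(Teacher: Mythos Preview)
Your proposal is correct and follows exactly the standard route the paper points to (it defers to the analogous Theorem~A.1 in \cite{LP} and Theorem~6.1 in \cite{BLH}, whose proofs proceed precisely by the simple-random-variable case via uniqueness, followed by $L^2$-approximation using the Lipschitz stability estimates). The localization step you single out---pulling the $\mathcal F_t$-measurable indicator $\mathbf 1_{A_i}$ inside the It\^o and compensated Poisson integrals and using that the $A_i$ partition $\Omega$ so that $\phi(s,\sum_i\pi_i\mathbf 1_{A_i},\sum_i k_i\mathbf 1_{A_i})=\sum_i\phi(s,\pi_i,k_i)\mathbf 1_{A_i}$ for each coefficient $\phi$---is exactly the point one has to verify, and together with Lemma~\ref{l3} it yields the claimed decomposition.
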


The proof of Theorem \ref{th6.1}  is similar to Theorem A.1 in
\cite{LP} for the decoupled FBSDE with jumps, or Theorem 6.1 in
\cite{BLH}.

\begin{remark}
\label{re6.3} {\rm(i)} From Theorem \ref{th6.1}, obviously, $Y_{s}^{t,\zeta}%
=Y_{s}^{s,X_{s}^{t,\zeta}}=u(s,X_{s}^{t,\zeta}).$

 {\rm(ii)} From now for convenience, we take $\rho(e)=C(1\wedge|e|)$, where $C$ is a constant.
\end{remark}

\begin{proposition}
\label{pro6.2} Under the assumptions $( \mathbf{H2.1}),\ ( \mathbf{H2.2}),\ (
\mathbf{H3.1}),\ ( \mathbf{H3.2})$, for any $p\geq 2$, $0 \leq t \leq T$ and
the associated initial states $\zeta,\  \zeta^{\prime}\in L^p(\Omega,\mathcal{F}%
_{t},P;\mathbb{R}^{n}),$ there exists $\tilde{\delta}_{0}>0$ which
depends on $p$ and the Lipschitz constant and the linear growth
constant $L$, such that
\[%
\begin{array}
[c]{llll}%
{(\mathrm{i)}} &  & E[\mathop{\rm sup}\limits_{t\leq s\leq
t+\delta }|{X}_{s}^{t,\zeta}|^{p}+\mathop{\rm
sup}\limits_{t\leq s\leq
t+\delta}|{Y}_{s}^{t,\zeta}|^{p} + (\int
_{t}^{t+\delta}|{Z}_{s}^{t,\zeta}|^{2}ds)^{\frac{p}{2}}\\
&& + (\int_{t}^{t+\delta}\int
_{E}|{K}_{s}^{t,\zeta}(e)|^{2}\lambda(de)ds)^{\frac{p}{2}}\mid \mathcal{F}_{t}]
\leq
C_{p}(1+|\zeta|^{p}),\  \mbox{P-a.s.} ; & \\
{(\mathrm{ii)}} &  & E[\mathop{\rm sup}\limits_{t\leq s\leq t+\delta}%
|X_{s}^{t,\zeta}-\zeta|^{p}\mid \mathcal{F}_{t}]\leq C_{p}\delta(1 +|\zeta|^{p}),\  \mbox{P-a.s.},\ 0\leq \delta \leq \tilde{\delta}_{0}. &
\end{array}
\]
\end{proposition}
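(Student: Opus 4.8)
The plan is to derive two conditional inequalities on $[t,t+\delta]$ and combine them into one self-improving estimate for $E[\sup|X|^p\mid\mathcal F_t]$ (I abbreviate $\sup_{t\le s\le t+\delta}|X_s^{t,\zeta}|^p$ by $\sup|X|^p$, and similarly for $Y$), which can be solved once $\delta$ is small. Existence and uniqueness of $(X^{t,\zeta},Y^{t,\zeta},Z^{t,\zeta},K^{t,\zeta})$ (here $m=1$) is given by Lemma \ref{l3}; write $\|K_r\|^2:=\int_E|K_r^{t,\zeta}(e)|^2\lambda(de)$. A first reduction removes $\sup|Y|^p$ from the bookkeeping: by Theorem \ref{th6.1} and Remark \ref{re6.3}(i), $Y_s^{t,\zeta}=u(s,X_s^{t,\zeta})$, and the linear growth of $u$ from Remark \ref{re6.1} gives $\sup|Y|^p\le C_p(1+\sup|X|^p)$, in particular $|Y_{t+\delta}^{t,\zeta}|^p\le C_p(1+\sup|X|^p)$. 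Hence it suffices to control $E[\sup|X|^p\mid\mathcal F_t]$, $E[(\int_t^{t+\delta}|Z_r|^2dr)^{p/2}\mid\mathcal F_t]$ and $E[(\int_t^{t+\delta}\|K_r\|^2dr)^{p/2}\mid\mathcal F_t]$. Their a priori finiteness, which is needed for the absorption steps, is obtained by first running the argument on $[t,\tau_N\wedge(t+\delta)]$ with $\tau_N$ the exit time of $|X^{t,\zeta}|$ from the ball of radius $N$; the bounds will be independent of $N$, so Fatou's lemma removes the localization.

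For the forward inequality, apply the Burkholder--Davis--Gundy inequality to $X^{t,\zeta}_\cdot-\zeta$ and a Kunita-type Burkholder inequality to its purely discontinuous part. The drift is handled by H\"older: $(\int_t^{t+\delta}|b(r,\cdot)|dr)^p\le C_p[\delta^{p}(1+\sup|X|+\sup|Y|)^p+\delta^{p/2}(\int_t^{t+\delta}|Z_r|^2dr)^{p/2}+\delta^{p/2}(\int_t^{t+\delta}\|K_r\|^2dr)^{p/2}]$, using $(\mathbf{H3.1})$ and $\int_t^{t+\delta}|Z_r|dr\le\delta^{1/2}(\int_t^{t+\delta}|Z_r|^2dr)^{1/2}$ (similarly for $\|K\|$). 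The diffusion and jump parts are handled with the sharper growth of $\sigma,h$ from $(\mathbf{H3.2})$ together with the normalization $\rho(e)=C(1\wedge|e|)$ (Remark \ref{re6.3}(ii)), which makes $\int_E\rho^p(e)\lambda(de)<\infty$ for $p\ge2$: one gets $(\int_t^{t+\delta}|\sigma|^2dr)^{p/2}+(\int_t^{t+\delta}\|h\|_\lambda^2dr)^{p/2}+\int_t^{t+\delta}\int_E|h|^p\lambda(de)dr\le C_p(\delta^{p/2}+\delta)(1+\sup|X|+\sup|Y|)^p$. Conditioning on $\mathcal F_t$ and using $\delta\le1$, this yields a forward inequality of the shape $E[\sup|X|^p\mid\mathcal F_t]\le C_p|\zeta|^p+C_p\delta(1+E[\sup|X|^p\mid\mathcal F_t]+E[\sup|Y|^p\mid\mathcal F_t]+E[(\int_t^{t+\delta}|Z|^2)^{p/2}\mid\mathcal F_t]+E[(\int_t^{t+\delta}\|K\|^2)^{p/2}\mid\mathcal F_t])$, where --- crucially, thanks to $(\mathbf{H3.2})$ --- the $Z$- and $K$-terms appear multiplied by a positive power of $\delta$.

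For the backward inequality, apply It\^o's formula to $|Y_s^{t,\zeta}|^2$ on $[s,t+\delta]$; its bracket term equals $\int_s^{t+\delta}|Z_r|^2dr+\int_s^{t+\delta}\int_E|K_r(e)|^2\mu(drde)$. Raising the identity (with $s=t$) to the power $p/2$ and taking $E[\,\cdot\mid\mathcal F_t]$, one bounds: the terminal term by $|Y_{t+\delta}|^p\le C_p(1+\sup|X|^p)$; the drift $\int_t^{t+\delta}|Y||g|dr$ by the linear growth of $g$, Cauchy--Schwarz and Young's inequality, which produces $\varepsilon$-multiples of $(\int|Z|^2)^{p/2}$ and $(\int\|K\|^2)^{p/2}$ and $(\delta+\varepsilon)$-controlled multiples of $\sup|X|^p$ and $\sup|Y|^p$; the Brownian martingale $\int YZdB$ by BDG and Young; and the jump martingale $\int\int Y_{r-}K\tilde\mu$ by BDG, which produces a small multiple of $(\int_t^{t+\delta}\int_E|K_r(e)|^2\mu(drde))^{p/2}$ plus $C_\varepsilon\sup|Y|^p$. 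The ingredient specific to the jump case --- the estimates on the jump-martingale part emphasized in the Introduction --- is the comparison of the predictable quadratic variation $\int_t^{t+\delta}\|K_r\|^2dr$ (the $\mathcal K_\lambda^2$-norm) with the optional one $\int_t^{t+\delta}\int_E|K_r(e)|^2\mu(drde)$: by Lenglart's domination inequality (valid since $p/2\ge1$) one has $E[(\int_t^{t+\delta}\|K_r\|^2dr)^{p/2}\mid\mathcal F_t]\le C_p\,E[(\int_t^{t+\delta}\int_E|K_r(e)|^2\mu(drde))^{p/2}\mid\mathcal F_t]$. This is used twice: to absorb the small $\|K\|$-terms on the right (which live in the $\lambda$-norm) into the left-hand side (which is the $\mu$-version), and, after the absorption, to convert the resulting $\mu$-bound back into a $\mathcal K_\lambda^2$-bound. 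Taking $\varepsilon$ small enough one obtains the backward inequality $E[(\int_t^{t+\delta}|Z|^2)^{p/2}\mid\mathcal F_t]+E[(\int_t^{t+\delta}\|K\|^2)^{p/2}\mid\mathcal F_t]\le C_p(1+E[\sup|X|^p\mid\mathcal F_t]+E[\sup|Y|^p\mid\mathcal F_t])$.

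To close the loop, substitute $\sup|Y|^p\le C_p(1+\sup|X|^p)$ into the backward inequality to get $E[(\int|Z|^2)^{p/2}\mid\mathcal F_t]+E[(\int\|K\|^2)^{p/2}\mid\mathcal F_t]\le C_p(1+E[\sup|X|^p\mid\mathcal F_t])$, and plug this and the same reduction into the forward inequality; the $Y$-, $Z$- and $K$-terms then collapse, leaving $E[\sup|X|^p\mid\mathcal F_t]\le C_p|\zeta|^p+C_p\delta(1+E[\sup|X|^p\mid\mathcal F_t])$. Choosing $\tilde\delta_0$ with $C_p\tilde\delta_0\le\tfrac12$ --- so $\tilde\delta_0$ depends only on $p$, the Lipschitz constants and the linear-growth constant $L$ --- and absorbing the last term gives $E[\sup|X|^p\mid\mathcal F_t]\le C_p(1+|\zeta|^p)$, and feeding this back into the forward and backward inequalities yields (i). For (ii), run the forward BDG/Kunita estimate once more but now keeping the explicit powers of $\delta$ instead of iterating: the $\sigma$- and $h$-contributions are bounded by $C_p\delta^{p/2}(1+\sup|X|+\sup|Y|)^p$ and $C_p\delta(1+\sup|X|+\sup|Y|)^p$, and the $b$-contribution by $C_p\delta^{p}(1+\sup|X|+\sup|Y|)^p+C_p\delta^{p/2}[(\int|Z|^2)^{p/2}+(\int\|K\|^2)^{p/2}]$; inserting part (i) to bound these integrals by $C_p(1+|\zeta|^p)$ and using $\delta\le\tilde\delta_0\le1$ (so $\delta^{p/2}\le\delta$) gives $E[\sup_{t\le s\le t+\delta}|X_s^{t,\zeta}-\zeta|^p\mid\mathcal F_t]\le C_p\delta(1+|\zeta|^p)$. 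The step I expect to be the main obstacle is the backward one: for $p>2$ the optional and predictable quadratic variations of the $K$-integral are not $L^{p/2}$-equivalent in both directions, so each must be used only in the direction where its inequality holds (Lenglart for the $K$-integral; the Kunita inequality --- at the cost of a harmless $\int\int|h|^p\lambda$-term --- for the forward $h$-integral), and keeping the bookkeeping of the $\varepsilon$'s and the powers of $\delta$ consistent, so that every term can be absorbed, is the technical core.
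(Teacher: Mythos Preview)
Your strategy is essentially the paper's: reduce $\sup|Y|^p$ to $\sup|X|^p$ via $Y_s=u(s,X_s)$, derive a backward bound on $Z,K$ and a forward bound on $X-\zeta$, and close the loop for small $\delta$. Two points of divergence are worth noting. First, the inequality you invoke as ``Lenglart's domination inequality'' is misnamed: Lenglart's result compares a process with its predictable dominator for exponents in $(0,1)$ and goes in the opposite direction; what you need (and what is true) is $E[(\int\!\!\int|K|^2\lambda\,dr)^{p/2}\mid\mathcal F_t]\le C_p\,E[(\int\!\!\int|K|^2\mu)^{p/2}\mid\mathcal F_t]$ for $p/2\ge1$, which the paper isolates and proves directly as Lemma~\ref{inequality} by writing $(\int f)^{p/2}=\tfrac p2\int f(\int_t^\cdot f)^{p/2-1}$ and using that $\lambda\,dr$ is the compensator of $\mu$. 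Second, your backward step goes through It\^o on $|Y|^2$ raised to $p/2$, whereas the paper works directly from $Y_t=Y_s+\int g-\int Z\,dB-\int\!\!\int K\,\tilde\mu$ and applies BDG to the sum martingale, which is a bit cleaner (no $\varepsilon$-splitting of the jump martingale is needed). On the forward side, instead of citing a Kunita-type inequality, the paper computes $E[(\int\!\!\int(1\wedge|e|^2)|X_{r-}-\zeta|^2\mu)^{p/2}\mid\mathcal F_t]\le C_p\delta\,E[\sup|X-\zeta|^p\mid\mathcal F_t]$ by an explicit jump-by-jump expansion (their (\ref{ee6.11111})); your Kunita route with the extra $\int\!\!\int|h|^p\lambda$ term achieves the same. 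Otherwise the logic and the absorption bookkeeping match.
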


\begin{remark}\label{wrong}
Let us point out that, unlike FBSDEs without jumps, estimates (ii) does not hold true with $\delta^\frac{p}{2}$ instead of $\delta$ at the right hand, that is, one can't get the following estimate like FBSDEs without jumps, even for the decoupled FBSDEs with jumps:
for all $p\geq2$, $E[\sup\limits_{t\leq s\leq t+\delta}|X_{s}^{t,\zeta}-\zeta|^{p}\mid \mathcal{F}_{t}]\leq C_{p}\delta^\frac{p}{2}(1 +|\zeta|^{p}),\  \mbox{P-a.s.},\ 0\leq \delta \leq \delta_{0}$.

Indeed, if the above estimate is true, then one can get, for all $t\leq s\leq s+\delta\leq t+\delta_0$, $$E[
|X_{s+\delta}^{t,\zeta}-X_{s}^{t,\zeta}|^{p}]\leq E[E[|X_{s+\delta}^{s,X_{s}^{t,\zeta}}-X_{s}^{t,\zeta}|^{p}\mid \mathcal{F}_{s}]]\leq C_{p}\delta^\frac{p}{2}E[(1 +|X_{s}^{t,\zeta}|^{p})],$$ and for $\frac{p}{2}>2$,  Kolmogorov's Continuity Criterion would imply the continuity of  the jump process $X^{t,\zeta}$  which is  impossible.

\end{remark}

In order to prove Proposition \ref{pro6.2}, we need the following lemma.
\begin{lemma} \label{inequality} Under the assumptions $( \mathbf{H2.1}),\ ( \mathbf{H2.2}),\ (
\mathbf{H3.1}),\ ( \mathbf{H3.2})$. For any $p\geq2$,
\begin{equation}\label{ee6.1111}
E[(\int_t^{t+\delta}\int_E|K_s^{t,\zeta}(e)|^2\lambda(de)ds)^\frac{p}{2}\mid\mathcal {F}_t]\leq (\frac{p}{2})^\frac{p}{2}E[(\int_t^{t+\delta}\int_E|K_s^{t,\zeta}(e)|^2\mu(dsde))^\frac{p}{2}\mid\mathcal {F}_t].
\end{equation}\end{lemma}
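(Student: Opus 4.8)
The plan is to read the left-hand side of (\ref{ee6.1111}) as the $q$-th moment, with $q:=p/2\ge 1$, of the predictable compensator of the increasing process
\[
N_r:=\int_t^r\!\!\int_E|K_s^{t,\zeta}(e)|^2\,\mu(ds\,de),\qquad r\in[t,t+\delta],
\]
and the right-hand side as $q^q$ times the $q$-th moment of $N$ itself; thus the lemma is the conditional form of the classical bound $\|\widetilde N\|_{L^q}\le q\,\|N\|_{L^q}$ for compensators of increasing processes. Writing $\widetilde N_r:=\int_t^r\!\int_E|K_s^{t,\zeta}(e)|^2\lambda(de)\,ds$, and using that $|K_s^{t,\zeta}(e)|^2$ is $\mathcal P\otimes\mathcal B(E)$-measurable while $\hat\mu(ds\,de)=\lambda(de)\,ds$, the process $\widetilde N$ is exactly the predictable compensator of $N$ (and, being absolutely continuous, it is continuous), and $N_{t+\delta},\widetilde N_{t+\delta}\in L^1$ because $K^{t,\zeta}\in\mathcal K^2_\lambda(t,T;\mathbb R^m)$. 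For $p=2$ the claimed inequality is just the identity $E[\widetilde N_{t+\delta}\mid\mathcal F_t]=E[N_{t+\delta}\mid\mathcal F_t]$, so from now on I take $q=p/2>1$.

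Since $\widetilde N$ need not be bounded, I would first localize along $\tau_n:=\inf\{r\ge t:\widetilde N_r\ge n\}\wedge(t+\delta)$; by continuity of $\widetilde N$ one has $\widetilde N_{\tau_n}\le n$, and $\tau_n\uparrow t+\delta$ a.s. The substitution rule for the $C^1$ map $x\mapsto x^q$ applied to the absolutely continuous path $\widetilde N$ (with $\widetilde N_t=0$) gives $\widetilde N_{\tau_n}^q=q\int_t^{\tau_n}\widetilde N_r^{q-1}\,d\widetilde N_r$. The integrand $\widetilde N_r^{q-1}\mathbf 1_{]\!]t,\tau_n]\!]}(r)$ is predictable and bounded by $n^{q-1}$, and $N-\widetilde N$ is a true martingale on $[t,t+\delta]$, so $\int_t^{\cdot\wedge\tau_n}\widetilde N_r^{q-1}\,d(N_r-\widetilde N_r)$ is a martingale vanishing at $t$; taking $E[\cdot\mid\mathcal F_t]$ therefore replaces $d\widetilde N$ by $dN$:
\[
E\big[\widetilde N_{\tau_n}^q\mid\mathcal F_t\big]=q\,E\Big[\int_t^{\tau_n}\widetilde N_r^{q-1}\,dN_r\ \Big|\ \mathcal F_t\Big].
\]
Because $\widetilde N$ is increasing, $\widetilde N_r^{q-1}\le\widetilde N_{\tau_n}^{q-1}$ for $r\le\tau_n$, so the right side is at most $q\,E[\widetilde N_{\tau_n}^{q-1}N_{\tau_n}\mid\mathcal F_t]\le q\,E[\widetilde N_{\tau_n}^{q-1}N_{t+\delta}\mid\mathcal F_t]$.

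Applying the conditional Hölder inequality with exponents $q/(q-1)$ and $q$ then yields
\[
E\big[\widetilde N_{\tau_n}^q\mid\mathcal F_t\big]\le q\,\big(E[\widetilde N_{\tau_n}^q\mid\mathcal F_t]\big)^{1-\frac1q}\big(E[N_{t+\delta}^q\mid\mathcal F_t]\big)^{\frac1q},
\]
and since $E[\widetilde N_{\tau_n}^q\mid\mathcal F_t]\le n^q<\infty$ I may divide (the estimate being trivial where this quantity vanishes) to obtain $E[\widetilde N_{\tau_n}^q\mid\mathcal F_t]\le q^q\,E[N_{t+\delta}^q\mid\mathcal F_t]$. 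Letting $n\to\infty$ and invoking conditional monotone convergence ($\widetilde N_{\tau_n}^q\uparrow\widetilde N_{t+\delta}^q$) gives $E[\widetilde N_{t+\delta}^q\mid\mathcal F_t]\le q^q\,E[N_{t+\delta}^q\mid\mathcal F_t]$, which is precisely (\ref{ee6.1111}) after inserting $q=p/2$ and the definitions of $\widetilde N_{t+\delta}$ and $N_{t+\delta}$. I expect the conceptual heart to be painless — the self-improving estimate $a\le q\,a^{1-1/q}b^{1/q}\Rightarrow a\le q^qb$ is what produces the constant $(p/2)^{p/2}$ — while the only genuinely delicate point is the localization bookkeeping: verifying that the compensator identity survives stopping at $\tau_n$, that $\widetilde N_r^{q-1}\mathbf 1_{]\!]t,\tau_n]\!]}$ is predictable and the stopped stochastic integral against $N-\widetilde N$ is a martingale, and that the division by $E[\widetilde N_{\tau_n}^q\mid\mathcal F_t]$ is legitimate before passing to the limit.
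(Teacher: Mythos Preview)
Your proof is correct and follows essentially the same approach as the paper: both differentiate $\widetilde N^{q}$, use the compensator identity to pass from $\lambda(de)\,ds$ to $\mu(ds\,de)$, bound the inner factor by monotonicity, apply H\"older, and then use the self-improving inequality $a\le q\,a^{1-1/q}b^{1/q}\Rightarrow a\le q^qb$. The only cosmetic difference is the handling of potential infiniteness --- the paper approximates $|K_s^{t,\zeta}(e)|^2$ from below by a sequence $K^n$ and takes a monotone limit, whereas you localize via stopping times $\tau_n$ on $\widetilde N$; both devices serve the same purpose and your version is perfectly fine.
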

\begin{proof} Setting $f_s:=\int_E|K_s^{t,\zeta}(e)|^2\lambda(de)$, we have
 $$
 \begin{array}{llll}
 &&E[(\int_t^{t+\delta}\int_E|K_s^{t,\zeta}(e)|^2\lambda(de)ds)^\frac{p}{2}\mid\mathcal {F}_t]=E[(\int_t^{t+\delta}f_sds)^\frac{p}{2}\mid\mathcal {F}_t]=\frac{p}{2}E[\int_t^{t+\delta}f_s(\int_t^sf_rdr)^{\frac{p}{2}-1} ds\mid\mathcal {F}_t]\\
 &&=\frac{p}{2}E[\int_t^{t+\delta}\int_E\{(\int_t^sf_rdr)^{\frac{p}{2}-1} \cdot |K_s^{t,\zeta}(e)|^2\}\lambda(de)ds\mid\mathcal {F}_t]\\
  &&=\frac{p}{2}E[\int_t^{t+\delta}\int_E\{(\int_t^sf_rdr)^{\frac{p}{2}-1} \cdot |K_s^{t,\zeta}(e)|^2\}\mu(dsde)\mid\mathcal {F}_t]\\
 &&\leq \frac{p}{2} E[(\int_t^{t+\delta}\int_E|K_s^{t,\zeta}(e)|^2\mu(dsde))(\int_t^{t+\delta}f_rdr)^{\frac{p}{2}-1}\mid\mathcal {F}_t]\\
&&\leq \frac{p}{2} (E[(\int_t^{t+\delta}\int_E|K_s^{t,\zeta}(e)|^2\mu(dsde))^\frac{p}{2}\mid\mathcal {F}_t])^\frac{2}{p}(E[(\int_t^{t+\delta}f_rdr)^{\frac{p}{2}}\mid\mathcal {F}_t])^{1-\frac{2}{p}}.
\end{array}
 $$
Therefore, we have (\ref{ee6.1111})
if $E[(\int_t^{t+\delta}\int_E|K_s^{t,\zeta}(e)|^2\lambda(de)ds)^\frac{p}{2}\mid\mathcal {F}_t]<+\infty,\ \mbox{P-a.s.}$ Otherwise, we approximate $|K_s^{t,\zeta}(e)|^2$ from below by an increasing sequence $K^n$ of non-negative predictable functions over $\Omega\times[0,T]\times E$ such that  $E[(\int_t^{t+\delta}\int_E|K_s^n(e)|^2\lambda(de)ds)^\frac{p}{2}\mid\mathcal {F}_t]<+\infty,\ n\geq 1.$ Then, with the same arguments as above we have
$$E[(\int_t^{t+\delta}\int_E|K_s^n(e)|^2\lambda(de)ds)^\frac{p}{2}\mid\mathcal {F}_t]\leq (\frac{p}{2})^\frac{p}{2} E[(\int_t^{t+\delta}\int_E|K_s^n(e)|^2\mu(dsde))^\frac{p}{2}\mid\mathcal {F}_t],\ n\geq 1, $$
and taking the limit as $n\rightarrow +\infty$ by using the monotone convergence theorem, we obtain (\ref{ee6.1111}).

\end{proof}

Now we give the proof of Proposition \ref{pro6.2}.

\begin{proof} Without loss of generality, we restrict ourselves to the proof for $p=2k,\ k\in \mathbb{Z}^+$.

From the Remarks \ref{re6.1} and \ref{re6.3} we have
$|{Y}_s^{t,\zeta}|=|{Y}_s^{s,{X}_s^{t,\zeta}}|\leq
C(1+|{X}_s^{t,\zeta}|),\  \mbox{P-a.s.}$\\
Since
$${Y}_t^{t,\zeta}={Y}_s^{t,\zeta}+\int_t^s
g(r,{X}_r^{t,\zeta},{Y}_r^{t,\zeta},{Z}_r^{t,\zeta},{K}_r^{t,\zeta})dr-
\int_t^s{Z}_r^{t,\zeta}dB_r-\int_t^s
\int_E{K}_r^{t,\zeta}(e)\tilde{\mu}(drde),\ t\leq s\leq t+\delta,$$
we get from Burkholder-Davis-Gundy inequality and (\ref{ee6.1111}), $$
\begin{array}{llll}
&&E[(\int_t^{t+\delta}|Z_s^{t,\zeta}|^2ds)^{\frac{p}{2}}\mid
\mathcal {F}_t]+E[(\int_t^{t+\delta}
\int_E|K_s^{t,\zeta}(e)|^2\lambda(de)ds)^{\frac{p}{2}}\mid \mathcal
{F}_t]\\
&\leq& E[(\int_t^{t+\delta}|Z_s^{t,\zeta}|^2ds)^{\frac{p}{2}}\mid
\mathcal {F}_t]+C_pE[(\int_t^{t+\delta}
\int_E|K_s^{t,\zeta}(e)|^2\mu(dsde))^{\frac{p}{2}}\mid \mathcal
{F}_t]\\
&\leq& C_pE[\sup \limits_{t\leq s\leq
t+\delta}|\int_t^s{Z}_r^{t,\zeta}dB_r+\int_t^s\int_EK_s^{t,\zeta}(e)\tilde{\mu}(dsde)|^p\mid \mathcal {F}_t]\\
&\leq&C_pE[\sup \limits_{t\leq s\leq
t+\delta}|{Y}_s^{t,\zeta}|^p+(\int_t^{t+\delta}
|g(s,{X}_s^{t,\zeta},{Y}_s^{t,\zeta},{Z}_s^{t,\zeta},{K}_s^{t,\zeta})|ds)^p\mid
\mathcal
{F}_t]\\
&\leq&C_pE[\sup \limits_{t\leq s\leq
t+\delta}|{Y}_s^{t,\zeta}|^p\mid \mathcal
{F}_t]+C_pE[(\int_t^{t+\delta}(1+|{X}_s^{t,\zeta}|+|{Y}_s^{t,\zeta}|+|{Z}_s^{t,\zeta}|+
\int_E|{K}_s^{t,\zeta}(e)|(1\wedge|e|)\lambda(de))ds)^p\mid \mathcal
{F}_t]\\
&\leq&C_pE[\sup \limits_{t\leq s\leq
t+\delta}|{Y}_s^{t,\zeta}|^p\mid \mathcal
{F}_t]+C_p\delta^p+C_pE[\sup \limits_{t\leq s\leq
t+\delta}|{X}_s^{t,\zeta}|^p+\sup \limits_{t\leq s\leq
t+\delta}|{Y}_s^{t,\zeta}|^p\mid \mathcal
{F}_t]\delta^p\\
&&+C_pE[(\int_t^{t+\delta}|Z_s^{t,\zeta}|^2ds)^{\frac{p}{2}}\mid
\mathcal {F}_t]\delta^{\frac{p}{2}}+C_pE[( \int_t^{t+\delta}
\int_E|K_s^{t,\zeta}(e)|^2\lambda(de)ds)^{\frac{p}{2}}\mid \mathcal
{F}_t]\delta^{\frac{p}{2}}\\
&= &C_p\delta^p+C_p\delta^pE[\sup \limits_{t\leq s\leq
t+\delta}|{X}_s^{t,\zeta}|^p\mid \mathcal
{F}_t]+(C_p+C_p\delta^p)E[\sup \limits_{t\leq s\leq
t+\delta}|{Y}_s^{t,\zeta}|^p\mid \mathcal
{F}_t]\\
&&+C_p\delta^{\frac{p}{2}}E[(
\int_t^{t+\delta}|Z_s^{t,\zeta}|^2ds)^{\frac{p}{2}}\mid \mathcal
{F}_t]+C_p\delta^{\frac{p}{2}}E[( \int_t^{t+\delta}
\int_E|K_s^{t,\zeta}(e)|^2\lambda(de)ds)^{\frac{p}{2}}\mid \mathcal
{F}_t].
\end{array}
$$
Choosing $\delta_0>0,$ such that $1-C_p\delta_0^{\frac{p}{2}}>0,$
 we get, for any $0\leq\delta \leq \delta_0$,
\begin{equation}\label{ee6.669}\begin{array}{llll}
&&E[(\int_t^{t+\delta}|Z_s^{t,\zeta}|^2ds)^{\frac{p}{2}}\mid
\mathcal {F}_t]+E[(\int_t^{t+\delta}
\int_E|K_s^{t,\zeta}(e)|^2\lambda(de)ds)^{\frac{p}{2}}\mid \mathcal
{F}_t]\\
&\leq& C_p\delta^{p}+C_p\delta^{p}E[\sup \limits_{t\leq s\leq
t+\delta}|{X}_s^{t,\zeta}|^p\mid \mathcal
{F}_t]+(C_p+C_p\delta^p)E[\sup \limits_{t\leq s\leq
t+\delta}|{Y}_s^{t,\zeta}|^p\mid \mathcal
{F}_t].\end{array}\end{equation} On the other hand, from Remark
\ref{re6.3} and (\ref{ee6.10}), for $t\leq s\leq T$, \begin{equation}\label{ee6.0000}
\begin{array}{llll}&&
E[\sup \limits_{t\leq r\leq
s}|X_r^{t,\zeta}-\zeta|^p\mid \mathcal {F}_t]\\
&\leq &C_pE[(\int_t^s
|b(r,X_r^{t,\zeta},Y_r^{t,\zeta},Z_r^{t,\zeta},K_r^{t,\zeta})|dr)^p\mid
\mathcal {F}_t]+C_pE[(\int_t^s|
\sigma(r,X_r^{t,\zeta},Y_r^{t,\zeta},Z_r^{t,\zeta},K_r^{t,\zeta})|^2dr)^{\frac{p}{2}}\mid
\mathcal
{F}_t]\\
&&+C_pE[(\int_t^s\int_E|
h(r,X_{r-}^{t,\zeta},Y_{r-}^{t,\zeta},Z_{r}^{t,\zeta},K_{r}^{t,\zeta}(e),e)|^2\mu(drde))^{\frac{p}{2}}\mid
\mathcal
{F}_t]\\
&\leq
&C_pE[(\int_t^s(1+|X_r^{t,\zeta}-\zeta|+|\zeta|+|Z_r^{t,\zeta}|+
\int_E|K_r^{t,\zeta}(e)|\lambda(de))dr)^p\mid \mathcal
{F}_t]\\
&&+C_pE[(\int_t^s(1+|X_r^{t,\zeta}|+|Y_r^{t,\zeta}|)^2dr)^{\frac{p}{2}}\mid \mathcal
{F}_t]\\
&&+C_pE[(\int_t^s\int_E|
h(r,X_{r-}^{t,\zeta},Y_{r-}^{t,\zeta},Z_{r}^{t,\zeta},K_{r}^{t,\zeta}(e),e)|^2\mu(drde))^{\frac{p}{2}}\mid
\mathcal
{F}_t]\\
&\leq& C_p(1+|\zeta|^p)(s-t)^{\frac{p}{2}}+C_p(s-t)^{\frac{p}{2}}E[(\int_t^{s}|Z_r^{t,\zeta}|^2dr)^{\frac{p}{2}}+(
\int_t^{s}
\int_E|K_r^{t,\zeta}(e)|^2\lambda(de)dr)^{\frac{p}{2}}\mid \mathcal
{F}_t]\\
&&+C_pE[\int_t^s|X_r^{t,\zeta}-\zeta|^pdr\mid
\mathcal
{F}_t]+C_pE[(\int_t^s\int_E|
h(r,X_{r-}^{t,\zeta},Y_{r-}^{t,\zeta},Z_{r}^{t,\zeta},K_{r}^{t,\zeta}(e),e)|^2\mu(drde))^{\frac{p}{2}}\mid
\mathcal
{F}_t],
\end{array}
\end{equation}
where
\begin{equation}\label{ee6.30}\begin{array}{llll}&&
E [(\int_t^{s}\int_E |h(r,X^{t,\zeta}_{r-} , Y^{t,\zeta}_{r-}
,Z^{t,\zeta}_r,K^{t,\zeta}_r,e)|^2\mu(drde))^{\frac{p}{2}} |
\mathcal {F}_t]\\
&\leq&  E [(\int_t^{t+\delta}\int_EC(1\wedge |e|^2)(1+|X^{t,\zeta}_{r-}|+|Y^{t,\zeta}_{r-}|)^2\mu(drde))^{\frac{p}{2}} |
\mathcal {F}_t]\\
&\leq& C_p E [(\int_t^{t+\delta}\int_E(1\wedge |e|^2)(1+|X^{t,\zeta}_{r-}|^2+|Y^{t,\zeta}_{r-}|^2)\mu(drde))^{\frac{p}{2}} |
\mathcal {F}_t]\\
&\leq& C_p E [(\int_t^{t+\delta}\int_E(1\wedge |e|^2)(1+|X^{t,\zeta}_{r-}-\zeta|^2+|\zeta|^2)\mu(drde))^{\frac{p}{2}} |
\mathcal {F}_t]\\
&\leq& C_p E [(\int_t^{t+\delta}\int_E(1\wedge |e|^2)\mu(drde))^{\frac{p}{2}} |
\mathcal {F}_t](1+|\zeta|^p)+C_pE[(\int_t^{t+\delta}\int_E(1\wedge |e|^2)|X^{t,\zeta}_{r-}-\zeta|^2\mu(drde))^{\frac{p}{2}} |\mathcal {F}_t].
\end{array}\end{equation}
Notice that  \begin{equation}\label{ee6.11111}E[(\int_t^{t+\delta}\int_E(1\wedge |e|^2)|X^{t,\zeta}_{r-}-\zeta|^2\mu(drde))^{\frac{p}{2}} |\mathcal {F}_t]\leq C_p\delta E[\sup\limits_{t\leq s\leq t+\delta}|X^{t,\zeta}_{r}-\zeta|^p|\mathcal {F}_t].\end{equation} Indeed, we denote $\tilde{X}^{t,\zeta}_{s-}:=X^{t,\zeta}_{s-}-\zeta, \ \rho_s(e):=(1\wedge |e|^2)|\tilde{X}^{t,\zeta}_{s}|^2,\ \bar{\rho}_s(e):=|\tilde{X}^{t,\zeta}_{s}|^2,\ A_r:=\int_t^r\int_E \rho_s(e)\mu(dsde)$. Then, from Young inequality we have
$$\begin{array}{llll}
&&A_r^p-A_{t-}^p=\sum\limits_{t\leq s\leq r}(A_s^p-A_{s-}^p)=\sum\limits_{t\leq s\leq r}((\int_t^s\int_E \rho_r(e')\mu(drde'))^p-(\int_t^{s-}\int_E \rho_r(e')\mu(drde'))^p)\\
&=& \sum\limits_{t\leq s\leq r}\int_E ((\int_t^{s-}\int_E \rho_r(e')\mu(drde')+\rho_s(e))^p-(\int_t^{s-}\int_E \rho_r(e')\mu(dsde'))^p)\mu(\{s\},de)\\
&=&  \sum\limits_{t\leq s\leq r}\int_E \sum\limits_{l=1}^p\left(
\begin{array}
[c]{c}
p\\
l
\end{array}
\right)(\int_t^{s-}\int_E \rho_r(e')\mu(drde'))^{p-l}\rho_s(e)^l\mu(\{s\},de)\\
&=&  \int_t^r\int_E \sum\limits_{l=1}^p\left(
\begin{array}
[c]{c}
p\\
l
\end{array}
\right)(\int_t^{s-}\int_E \rho_r(e')\mu(drde'))^{p-l}\rho_s(e)^l\mu(dsde)\\
&\leq& C_p \int_t^r\int_E(1\wedge |e|^2)((\int_t^{s-}\int_E \rho_r(e')\mu(drde'))^{p}+|\bar{\rho}_s(e)|^p)\mu(dsde).
\end{array}$$
Therefore, $$E[|\int_t^r\int_E \rho_s(e)\mu(dsde)|^p|\mathcal {F}_t]\leq C_p E[\int_t^r\int_E(1\wedge |e|^2)((\int_t^{s-}\int_E \rho_r(e')\mu(drde'))^{p}+|\bar{\rho}_s(e)|^p)\lambda(de)ds|\mathcal {F}_t].$$
From the Gronwall inequality, we get
$$E[|\int_t^r\int_E \rho_s(e)\mu(dsde)|^p|\mathcal {F}_t]\leq C_pE[ \int_t^r\int_E(1\wedge |e|^2)|\bar{\rho}_s(e)|^p\lambda(de)ds|\mathcal {F}_t].$$
Therefore, $$E[|\int_t^r\int_E|X_{s-}^{t,\zeta}-\zeta|^{2}(1\wedge |e|^2)\mu(dsde)|^p|\mathcal {F}_t]\leq C_p E[\int_t^r|X_{s-}^{t,\zeta}-\zeta|^{2p}ds|\mathcal {F}_t]\leq C_p(r-t)E[\sup\limits_{t\leq s\leq r}|X_{s-}^{t,\zeta}-\zeta|^{2p}|\mathcal {F}_t].$$
Similarly,
 $E[|\int_t^r\int_E(1\wedge |e|^2)\mu(dsde)|^\frac{p}{2}|\mathcal {F}_t]\leq C_p(r-t).$
 Thus, from (\ref{ee6.30}) we have
 \begin{equation}
\begin{array}{llll}&&E[(\int_t^s\int_E|h(r,X_{r-}^{t,\zeta},Y_{r-}^{t,\zeta},Z_{r}^{t,\zeta},K_{r}^{t,\zeta},e)|^2\mu(drde))^{\frac{p}{2}}\mid \mathcal {F}_t]\leq
C_p\delta(1+|\zeta|^p)+C_p\delta E[\sup \limits_{t\leq r\leq
s}|X_r^{t,\zeta}-\zeta|^p\mid \mathcal {F}_t].\end{array}\end{equation}
Consequently, from (\ref{ee6.0000}),
  \begin{equation}\label{ee6.9}
\begin{array}{llll}E[\sup \limits_{t\leq r\leq
t+\delta}|X_r^{t,\zeta}-\zeta|^p\mid \mathcal {F}_t]
&\leq&
C_p\delta(1+|\zeta|^p)+C_p\delta E[\sup \limits_{t\leq r\leq
t+\delta}|X_r^{t,\zeta}-\zeta|^p\mid \mathcal {F}_t]\\
&&+C_p\delta^{\frac{p}{2}}E[(\int_t^{t+\delta}|Z_r^{t,\zeta}|^2dr)^{\frac{p}{2}}+(
\int_t^{t+\delta}
\int_E|K_r^{t,\zeta}(e)|^2\lambda(de)dr)^{\frac{p}{2}}\mid \mathcal
{F}_t],\ \mbox{P-a.s.}\end{array}\end{equation}
Choosing $\delta_1>0$, such that $1-C_p\delta_1>0$, for any $0\leq\delta\leq\delta_1$, we have
\begin{equation}\label{ee6.9990}
\begin{array}{llll}&&E[\sup \limits_{t\leq r\leq
t+\delta}|X_r^{t,\zeta}-\zeta|^p\mid \mathcal {F}_t]\\
&\leq&
C_p\delta(1+|\zeta|^p)+C_p\delta^{\frac{p}{2}}E[(\int_t^{t+\delta}|Z_r^{t,\zeta}|^2dr)^{\frac{p}{2}}+(
\int_t^{t+\delta}
\int_E|K_r^{t,\zeta}(e)|^2\lambda(de)dr)^{\frac{p}{2}}\mid \mathcal
{F}_t],\ \mbox{P-a.s.}\end{array}\end{equation}
Then, from (\ref{ee6.669}), (\ref{ee6.9990}) and $|Y_s^{t,\zeta}|\leq
C(1+|X_s^{t,\zeta}|)$, we have
$$\begin{array}{llll}&&
E[(\int_t^{t+\delta}|Z_s^{t,\zeta}|^2ds)^{\frac{p}{2}}\mid \mathcal
{F}_t]+E[(\int_t^{t+\delta}
\int_E|K_s^{t,\zeta}(e)|^2\lambda(de)ds)^{\frac{p}{2}}\mid \mathcal
{F}_t]\\
&\leq& C_p\delta^p(1+|\zeta|^p)+ C_p\delta^p+C_p+(C_p+
C_p\delta^p)E[\sup \limits_{t\leq s\leq
t+\delta}|X_s^{t,\zeta}-\zeta|^p\mid \mathcal {F}_t]\\
&\leq& C_p\delta^p+C_p+C_p\delta(1+|\zeta|^p)\\
&&+(C_p+
C_p\delta^p)C_p\delta^{\frac{p}{2}}(E[(\int_t^{s}|Z_r^{t,\zeta}|^2dr)^{\frac{p}{2}}\mid
\mathcal {F}_t]+E[(\int_t^{t+\delta}
\int_E|K_s^{t,\zeta}(e)|^2\lambda(de)ds)^{\frac{p}{2}}\mid \mathcal
{F}_t]),
\end{array}$$
and taking $0<\tilde{\delta}_0\leq \min(\delta_0,\delta_1)$ such that $1-(C_p+
C_p\tilde{\delta}_0^p)C_p\tilde{\delta}_0^{\frac{p}{2}}>0$, we have for all $0\leq
\delta \leq \tilde{\delta}_0,$ $$ E[(
\int_t^{t+\delta}|Z_s^{t,\zeta}|^2ds)^{\frac{p}{2}}\mid \mathcal
{F}_t]+E[(\int_t^{t+\delta}
\int_E|K_s^{t,\zeta}(e)|^2\lambda(de)ds)^{\frac{p}{2}}\mid \mathcal
{F}_t]\leq  C_p(1+|\zeta|^p),\  \mbox{P-a.s.} $$ From
(\ref{ee6.9990}), we get
$$E[\sup \limits_{t\leq s\leq
t+\delta}|X_s^{t,\zeta}-\zeta|^p\mid \mathcal {F}_t]\leq
C_p\delta(1 +|\zeta|^p),\  \mbox{P-a.s.},\ 0\leq
\delta \leq \tilde{\delta}_0.$$ Hence, finally, from
$|{Y}_s^{t,\zeta}|\leq C(1+|{X}_s^{t,\zeta}|)$, we have $$E[\sup
\limits_{t\leq s\leq t+\delta}|Y_s^{t,\zeta}|^p\mid \mathcal
{F}_t]\leq C_p(1 +|\zeta|^p),\  \mbox{P-a.s.},\  0\leq \delta \leq
\tilde{\delta}_0.$$\end{proof}

\subsection{{\protect \large {Well-posedness and regularity results of fully
coupled FBSDEs with jumps on the small time interval}}}

In this subsection,  we first prove that the fully
coupled FBSDEs with jumps have a unique solution on a small time
interval, if the Lipschitz coefficients of $\sigma,\ h$ with respect
to $z,\ k$ are sufficiently small. Then, under these assumptions, we
prove some regularity results for the solutions of fully coupled
FBSDEs with jumps.
\begin{theorem}
\label{pro6.3} We suppose the assumptions $( \mathbf{H2.1}),\ ( \mathbf{H3.1}%
),\ ( \mathbf{H3.3})$ hold true, where assumption $( \mathbf{H3.3})$
is the following:
\begin{description}
\item[$( \mathbf{H3.3})$] The Lipschitz constant $L_{\sigma}\geq0$ of $\sigma$
with respect to $z,\ k$ is sufficiently small, i.e., there exists
some
$L_{\sigma}\geq0$ small enough such that, for all $t\in[0,T],\ x_{1},x_{2}%
\in \mathbb{R}^{n},\ y_{1},y_{2}\in \mathbb{R},\ z_{1},z_{2}\in \mathbb{R}%
^{d},\ k_{1},k_{2}\in \mathbb{R}$,
\[
|\sigma(t,x_{1},y_{1},z_{1},k_{1})-\sigma(t,x_{2},y_{2},z_{2},k_{2})|\leq
K(|x_{1}-x_{2}|+|y_{1}-y_{2}|)+L_{\sigma}(|z_{1}-z_{2}|+|k_{1}-k_{2}|).
\]
Also the Lipschitz coefficient $L_{h}(\cdot)$ of $h$ with respect to
$z,\ k$ is sufficiently small, i.e., there exists a  function
$L_{h}:E\rightarrow \mathbb{R}^{+}$ with $\tilde{C}_h:=\max(\sup\limits_{e\in E}L^2_h(e),\int_{E}L^{2}
_{h}(e)\lambda(de))<+\infty$ sufficiently small, and for all $t\in[0,T],\ x_{1},x_{2}\in
\mathbb{R}^{n},\ y_{1},y_{2}\in \mathbb{R},\ z_{1},z_{2}\in \mathbb{R}%
^{d},\ k_{1},k_{2}\in \mathbb{R}, \ e\in E$,
\[
|h(t,x_{1},y_{1},z_{1},k_{1},e)-h(t,x_{2},y_{2},z_{2},k_{2},e)|\leq
\rho(e)(|x_{1}-x_{2}|+|y_{1}-y_{2}|)+L_{h}(e)(|z_{1}-z_{2}|+|k_{1}-k_{2}|).
\]
\end{description}
Then, there exists a constant $\delta_{0}>0$ only depending on the
Lipschitz constants $K$ and $L_\sigma,\ \tilde{C}_h$, such that, for
every $0\leq \delta \leq \delta_{0}$, and $\zeta \in
L^{2}(\Omega,\mathcal{F}_{t},P;\mathbb{R}^{n})$, FBSDE
(\ref{equ6.1}) has a unique solution
$(\Pi_{s}^{t,\zeta},K_{s}^{t,\zeta})_{s\in[t,t+\delta]}$ on the time
interval $[t, t +\delta]$.
\end{theorem}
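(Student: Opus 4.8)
The plan is to prove Theorem \ref{pro6.3} by a Banach fixed-point argument on the small interval $[t,t+\delta]$, iterating on the backward triple $(Y,Z,K)$ and using the smallness of $L_\sigma$ and $\tilde{C}_h$ from $(\mathbf{H3.3})$ to close a contraction. I would work in the Banach space
\[
\mathcal{B}_\delta:=\mathcal{S}^2(t,t+\delta;\mathbb{R})\times\mathcal{M}^2(t,t+\delta;\mathbb{R}^d)\times\mathcal{K}_\lambda^2(t,t+\delta;\mathbb{R}),
\]
equipped with $\|(Y,Z,K)\|^2:=E[\sup_{t\leq s\leq t+\delta}|Y_s|^2]+E\int_t^{t+\delta}\big(|Z_s|^2+\int_E|K_s(e)|^2\lambda(de)\big)ds$. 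Given $(Y,Z,K)\in\mathcal{B}_\delta$, I first solve the forward SDE with jumps
\[
dX_s=b(s,X_s,Y_s,Z_s,K_s)\,ds+\sigma(s,X_s,Y_s,Z_s,K_s)\,dB_s+\int_E h(s,X_{s-},Y_{s-},Z_s,K_s(e),e)\,\tilde{\mu}(dsde),\quad X_t=\zeta;
\]
its coefficients are Lipschitz in $x$ and the frozen coefficients $b(\cdot,0,Y,Z,K)$, $\sigma(\cdot,0,Y,Z,K)$, $h(\cdot,0,Y,Z,K(\cdot),\cdot)$ are square-integrable by $(\mathbf{H2.1})$ (or $(\mathbf{H3.1})$) together with $(Y,Z,K)\in\mathcal{B}_\delta$, so this SDE has a unique solution $X\in\mathcal{S}^2(t,t+\delta;\mathbb{R}^n)$. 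Since then $\Phi(X_{t+\delta})\in L^2(\Omega,\mathcal{F}_{t+\delta},P;\mathbb{R})$ and $g$ is Lipschitz, I solve the BSDE with jumps
\[
\bar{Y}_s=\Phi(X_{t+\delta})+\int_s^{t+\delta}g(r,X_r,\bar{Y}_r,\bar{Z}_r,\bar{K}_r)\,dr-\int_s^{t+\delta}\bar{Z}_r\,dB_r-\int_s^{t+\delta}\int_E\bar{K}_r(e)\,\tilde{\mu}(drde),
\]
obtaining a unique $(\bar{Y},\bar{Z},\bar{K})\in\mathcal{B}_\delta$. This defines $\Psi:\mathcal{B}_\delta\to\mathcal{B}_\delta$, $\Psi(Y,Z,K):=(\bar{Y},\bar{Z},\bar{K})$, and $(X,Y,Z,K)$ solves FBSDE (\ref{equ6.1}) on $[t,t+\delta]$ if and only if $(Y,Z,K)$ is a fixed point of $\Psi$; hence it suffices to make $\Psi$ a strict contraction by taking $\delta$ small.

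For the contraction estimate I pick two inputs $(Y^i,Z^i,K^i)\in\mathcal{B}_\delta$, $i=1,2$, write $X^i$, $(\bar{Y}^i,\bar{Z}^i,\bar{K}^i)$ for the corresponding processes and set $\hat{X}:=X^1-X^2$, $\hat{Y}:=Y^1-Y^2$, and so on. Applying It\^{o}'s formula to $|\hat{X}_s|^2$, then the Burkholder--Davis--Gundy inequality to the martingale parts and Gronwall's inequality, and splitting each coefficient difference according to $(\mathbf{H3.3})$, one sees that: the $x$-differences are absorbed by Gronwall; the $y$-differences, occurring only under time or $\lambda(de)dr$ integrals, contribute a factor $\delta$; and the $(z,k)$-differences contribute a factor $\delta$ through the drift $b$ (Cauchy--Schwarz applied to $\sup_{r}|\hat{X}_r|\int_t^{s}|\Delta b_r|\,dr$), but a factor $L_\sigma^2$ through $\sigma$ and a factor $\tilde{C}_h$ through $h$ --- the latter because the $k$-Lipschitz coefficient $L_h(e)$ gets integrated against $|\hat{K}_r(e)|^2\lambda(de)$, which brings in both $\int_E L_h^2(e)\lambda(de)$ and $\sup_{e\in E}L_h^2(e)$, that is, $\tilde{C}_h$. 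This yields
\[
E\big[\sup_{t\leq s\leq t+\delta}|\hat{X}_s|^2\big]\leq C(\delta+L_\sigma^2+\tilde{C}_h)\Big(E\big[\sup_{t\leq s\leq t+\delta}|\hat{Y}_s|^2\big]+E\int_t^{t+\delta}\big(|\hat{Z}_r|^2+\int_E|\hat{K}_r(e)|^2\lambda(de)\big)dr\Big),
\]
with $C$ depending only on the Lipschitz constant $K$, on $\int_E\rho^2(e)\lambda(de)$, and on an upper bound $\delta_0$ for $\delta$.

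Next I apply the standard a priori estimate for Lipschitz BSDEs with jumps to the difference $(\hat{\bar{Y}},\hat{\bar{Z}},\hat{\bar{K}})$, whose terminal value is $\Phi(X^1_{t+\delta})-\Phi(X^2_{t+\delta})$ and whose generator difference, after freezing $(\bar{Y}^2,\bar{Z}^2,\bar{K}^2)$, is Lipschitz-dominated by $|\hat{X}_r|$; this gives $\|\Psi(Y^1,Z^1,K^1)-\Psi(Y^2,Z^2,K^2)\|^2\leq C\,E[\sup_{t\leq s\leq t+\delta}|\hat{X}_s|^2]$, with $C$ depending on the Lipschitz constant of $g$ (hence on $\int_E l^2(e)\lambda(de)$) and on $\delta_0$. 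Combining with the forward estimate,
\[
\|\Psi(Y^1,Z^1,K^1)-\Psi(Y^2,Z^2,K^2)\|^2\leq C(\delta+L_\sigma^2+\tilde{C}_h)\,\|(Y^1,Z^1,K^1)-(Y^2,Z^2,K^2)\|^2.
\]
The smallness hypotheses in $(\mathbf{H3.3})$ are precisely what guarantees $C(L_\sigma^2+\tilde{C}_h)\leq\tfrac12$; one then chooses $\delta_0>0$, depending only on $K,\ L_\sigma,\ \tilde{C}_h$ and uniform in $t$, so small that $C(\delta_0+L_\sigma^2+\tilde{C}_h)<1$. For every $0\leq\delta\leq\delta_0$ the map $\Psi$ is then a strict contraction of the complete space $\mathcal{B}_\delta$, so it has a unique fixed point, and the associated forward process $X$ furnishes the unique solution $(\Pi_s^{t,\zeta},K_s^{t,\zeta})_{s\in[t,t+\delta]}$ of (\ref{equ6.1}).

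The main obstacle is the dependence of $\sigma$ and $h$ on $(z,k)$: unlike the decoupled case, or the Brownian-only monotone case treated in Proposition \ref{pro6.1}, their contributions $L_\sigma^2$ and $\tilde{C}_h$ to the contraction constant do not vanish as $\delta\to0$, so shrinking the time interval alone would not suffice --- this is exactly why $(\mathbf{H3.3})$ is imposed, and controlling these two terms (rather than the routine It\^{o}/BDG/Gronwall bookkeeping) is the heart of the proof. The jump integrals require only a little extra care, notably the appearance of $\sup_{e\in E}L_h^2(e)$ alongside $\int_E L_h^2(e)\lambda(de)$ in the estimate of the small-jump part, but present no further essential difficulty once $(\mathbf{H3.3})$ is in force.
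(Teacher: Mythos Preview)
Your proposal is correct and follows essentially the same approach as the paper: both freeze the backward triple $(Y,Z,K)$, solve the forward SDE and then the BSDE to define a map on the backward space, and show that the contraction constant has the form $C(\delta+L_\sigma^2+\tilde{C}_h)$, so that the smallness of $L_\sigma$ and $\tilde{C}_h$ from $(\mathbf{H3.3})$ together with a small $\delta_0$ yields a strict contraction. The only cosmetic difference is that the paper states the fixed-point space as $\mathcal{M}^2(t,t+\delta;\mathbb{R}^{1+d})\times\mathcal{K}_\lambda^2$ rather than your $\mathcal{S}^2\times\mathcal{M}^2\times\mathcal{K}_\lambda^2$, though it then uses the sup norm for $\hat{y}$ in the estimates anyway; your identification of why both $\sup_{e}L_h^2(e)$ and $\int_E L_h^2(e)\lambda(de)$ enter (hence the definition of $\tilde{C}_h$) matches the paper's computation exactly.
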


\begin{proof}  It is easy to see, for any $v = ((y, z),k)\in \mathcal {M}^2(t, T;\mathbb{R}^{1+d})\times
K_\lambda^2(t, T;\mathbb{R})$, there exists a unique solution $V =
((Y,Z),K) \in \mathcal {M}^2(t, T;\mathbb{R}^{1+d})\times
K_\lambda^2(t, T;\mathbb{R})$ to the following decoupled FBSDE with
jumps:
\begin{equation}\label{equ6.2}\left \{ \begin{array}{llll}
dX_s &=& b(s,X_s, y_s, z_s,k_s)ds + \sigma(s,X_s, y_s,
z_s,k_s)dB_s+\int_Eh(s,X_{s-}, y_{s-}, z_{s},k_{s}(e),e)\tilde{\mu}(dsde),\\
dY_s &=&-g(s,X_s, Y_s,Z_s,K_s)ds + Z_sdB_s+\int_EK_s(e)\tilde{\mu}(dsde),\  s \in [t, T], \\
X_t& =& \zeta,\\
Y_T &=& \Phi(X_T). \end{array}\right.\end{equation} We will prove
that there exists a constant $ \delta_0>0,$ only depending on the
Lipschitz constants $K$, $L_\sigma$ and $L_h(\cdot)$ such that for
every $0\leq \delta \leq \delta_0$ the following mapping $$I:
\mathcal {M}^2(t, t + \delta;\mathbb{R}^{1+d})\times K_\lambda^2(t,
t + \delta;\mathbb{R}) \rightarrow \mathcal {M}^2(t, t +
\delta;\mathbb{R}^{1+d})\times K_\lambda^2(t, t +
\delta;\mathbb{R})$$ is a contraction. Let $v_i = ((y_i,
z_i),k_i)\in \mathcal {M}^2(t, t + \delta;\mathbb{R}^{1+d})\times
K_\lambda^2(t, t + \delta;\mathbb{R})$, and $V_i = I(v_i), \ i = 1,
2.$ We define $\hat{v} = ((y_1 -y_2, z_1 - z_2),k_1-k_2)$, and
$\hat{V} = ((Y_1 -Y_2,Z_1-Z_2),K_1-K_2),\  \hat{X}= X_1-X_2.$ Then,
by the usual techniques and the Gronwall inequality, we get
\begin{equation}\label{ee6.11}\begin{array}{llll}
&&E[\sup \limits_{t\leq
s\leq T} |\hat{X}_s|^2 | \mathcal {F}_t]\\
&\leq& CE[
\int^T_t|\hat{y}_s|^2ds |\mathcal {F}_t] + C((T-t) +L_\sigma^2+\int_EL^2_h(e)\lambda(de)+\sup \limits_{e\in E}L^2_h(e))E[\int^T_t(|\hat{z}_s|^2+\int_E|\hat{k}_s(e)|^2\lambda(de))ds|\mathcal{F}_t] \\
&\leq& C(T-t)E[\sup \limits_{t\leq s\leq
T}|\hat{y}_s|^2|\mathcal{F}_t] + C((T-t) + L_\sigma^2+\tilde{C}_h)E[ \int^T_t(|\hat{z}_s|^2+
\int_E|\hat{k}_s(e)|^2\lambda(de))ds|\mathcal{F}_t].\end{array}
\end{equation}

On the other hand, by using BSDE standard estimate, combined with
(\ref{ee6.11}), we get
$$\begin{array}{llll}&&E[\sup \limits_{t\leq s\leq T} |\hat{Y}_s|^2 +
\int^T_t |\hat{Z}_s|^2ds+\int^T_t\int_E|\hat{K}_s(e)|^2\lambda(de)ds]\\
&\leq & CE[|\Phi(X^1_T )-\Phi(X^2_T)|^2] + CE[\int^T_t
|g(r,X^1_r,V^1_r )-g(r,X^2_r,V^1_r)|^2dr]\\
&\leq& CE[|\hat{X}_T|^2] +
CE[\int^T_t|\hat{X}_r|^2dr]\\
&\leq& C(T-t)E[\sup \limits_{t\leq s\leq T} |\hat{y}_s|^2] + C((T- t)
+ L^2_\sigma+\tilde{C}_h)E[\int^T_t(|\hat{z}_s|^2+\int_E|\hat{k}_s(e)|^2\lambda(de))ds] \\
&\leq&C((T-t) + L^2_\sigma+ \tilde{C}_h)(E[\sup
\limits_{t\leq s\leq T} |\hat{y}_s|^2] + E[ \int^T_t(|\hat{z}_s|^2+
\int_E|\hat{k}_s(e)|^2\lambda(de))ds]).
\end{array}$$
As $L_\sigma,\ \tilde{C}_h$ are sufficiently small, there exists
$\delta_0>0$ such that $C\delta_0 + CL^2_\sigma+C\tilde{C}_h<{\frac{1}{2}}$, and therefore, for any $0\leq \delta \leq
\delta_0$, we have
\begin{equation}\begin{array}{llll}&&E[\sup \limits_{t\leq s\leq t+\delta}
|\hat{Y}_s|^2 +
\int_t^{t+\delta}|\hat{Z}_s|^2ds+\int^T_t\int_E|\hat{K}_s(e)|^2\lambda(de)ds]\\
&&\leq{\frac{1}{2}}(E[\sup \limits_{t\leq s\leq t+\delta}
|\hat{y}_s|^2+ \int_t^ {t+\delta}|\hat{z}_s|^2ds+ \int^T_t
\int_E|\hat{k}_s(e)|^2\lambda(de)ds]),\end{array}\end{equation}
which means, for any $0\leq \delta \leq \delta_0$ this mapping $I$
has a unique fixed point $I(V ) = V$, i.e., FBSDE (\ref{equ6.1}) has
a unique solution
$(\Lambda^{t,\zeta}_s,K^{t,\zeta}_s)_{s\in[t,t+\delta]}:=(X^{t,\zeta}_s,Y^{t,\zeta}_s,Z^{t,\zeta}_s,K^{t,\zeta}_s)_{s\in[t,t+\delta]}$
on $[t, t + \delta]$.

\end{proof}

\begin{remark}
\label{re6.4} In fact, from the proof we see that $L_{\sigma},\
\tilde{C}_h\geq0$ such that $CL^{2}_{\sigma}+C\tilde{C}_h<1$ is sufficient for Proposition
\ref{pro6.2}.
\end{remark}

Next we will prove a comparison theorem for the following fully
coupled FBSDE with jumps:
\begin{equation}
\label{equ6.4}\left \{
\begin{array}
[c]{llll}%
dX_{s} & = & b(s,X_{s}, Y_{s}, Z_{s})ds + \sigma(s,X_{s}, Y_{s}, Z_{s}%
)dB_{s}+\int_{E}h(s,X_{s-}, Y_{s-}, Z_{s},e)\tilde{\mu
}(dsde), & \\
dY_{s} & = & -g(s,X_{s}, Y_{s},Z_{s},K_{s})ds + Z_{s}dB_{s}+\int
_{E}K_{s}(e)\tilde{\mu}(dsde),\ s \in[t, t+\delta], & \\
X_{t} & = & \zeta, & \\
Y_{ t+\delta} & = & \Phi(X_{ t+\delta}). &
\end{array}
\right.
\end{equation}

\begin{theorem}
\label{th6.2} (Generalized Comparison Theorem) We suppose that the
assumptions $( \mathbf{H2.1}),\ ( \mathbf{H3.1}),\ ( \mathbf{H3.3})$
are satisfied. Let $\delta_{0} > 0$ be a constant, only depending on
the Lipschitz constants $K$, $L_\sigma$ and $L_h(\cdot)$,
such that for every $0\leq \delta \leq \delta_{0}$ and $\zeta \in L^{2}%
(\Omega,\mathcal{F}_{t}, P;\mathbb{R}^{n})$, FBSDE (\ref{equ6.4})
has a unique solution
$(X^{i}_{s},Y^{i}_{s},Z^{i}_{s},K^{i}_{s})_{s\in[t,t+\delta]}$
associated with $(b,\sigma,g, \zeta,\Phi_{i})$ on the time interval
$[t, t + \delta]$, respectively. Then, if for any $0\leq \delta \leq
\delta_{0}$ it holds
$\Phi_{1}(X_{t+\delta}^2)\geq \Phi_{2}(X_{t+\delta}^2),\ \mbox{P-a.s.}$ (resp., $\Phi_{1}(X_{t+\delta}^1)\geq \Phi_{2}(X_{t+\delta}^1),\ \mbox{P-a.s.}$), we also get $Y^{1}_{t}\geq Y^{2}_{t}%
$,\ \mbox{P-a.s.}
\end{theorem}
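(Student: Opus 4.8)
The plan is to reduce the assertion to a positivity statement for a linear backward SDE with jumps. Put $\widehat X=X^{1}-X^{2}$, $\widehat Y=Y^{1}-Y^{2}$, $\widehat Z=Z^{1}-Z^{2}$, $\widehat K=K^{1}-K^{2}$ and subtract the two copies of FBSDE (\ref{equ6.4}). Writing each increment of a Lipschitz coefficient as a bounded linear functional of the increments of its arguments (a parametrized mean value argument), one obtains a \emph{linear} FBSDE with jumps: the forward component $\widehat X$ solves a linear SDE with $\widehat X_{t}=0$ whose drift, diffusion and jump integrands are bounded affine combinations of $(\widehat X,\widehat Y,\widehat Z)$, the $\widehat Z$-coefficients in the diffusion and in the jump term being bounded by $L_{\sigma}$ and $L_{h}(\cdot)$; and $\widehat Y$ solves
\[
-d\widehat Y_{s}=\Big[a_{s}\widehat X_{s}+b_{s}\widehat Y_{s}+c_{s}\widehat Z_{s}+\int_{E}d_{s}(e)\widehat K_{s}(e)\lambda(de)\Big]ds-\widehat Z_{s}dB_{s}-\int_{E}\widehat K_{s}(e)\tilde\mu(dsde),
\]
with terminal value $\widehat Y_{t+\delta}=\Psi_{t+\delta}\widehat X_{t+\delta}+R$, where $\Psi_{t+\delta}$ is a bounded (random) row vector produced by the mean value expansion of $\Phi_{1}$ on the segment between $X^{2}_{t+\delta}$ and $X^{1}_{t+\delta}$, and $R:=\Phi_{1}(X^{2}_{t+\delta})-\Phi_{2}(X^{2}_{t+\delta})\ge0$ by hypothesis; for the ``resp.'' variant one instead expands $\Phi_{2}$ on that segment and sets $R:=\Phi_{1}(X^{1}_{t+\delta})-\Phi_{2}(X^{1}_{t+\delta})\ge0$. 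The crucial structural point is that, since $g(s,x,y,z,k)=f(s,x,y,z,\int_{E}k(e)l(e)\lambda(de))$ with $k\mapsto f(s,x,y,z,k)$ non-decreasing and $l\ge0$, the kernel $d_{s}(\cdot)$ is non-negative, in particular $d_{s}(e)>-1$.

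Next, because $0\le\delta\le\delta_{0}$, Theorem \ref{pro6.3} applies to this linear FBSDE on every sub-interval $[s,t+\delta]$, so it possesses a decoupling random field which, by linearity, is affine in the space variable: $\widehat Y_{s}=\mathcal U_{s}\widehat X_{s}+\mathcal V_{s}$ with $\mathcal U_{t+\delta}=\Psi_{t+\delta}$, $\mathcal V_{t+\delta}=R$, and with $\mathcal U$ uniformly bounded (it is the Lipschitz constant of the affine field, controlled by the Lipschitz constants of the data, cf.\ Theorem \ref{th6.1} and estimate (\ref{ee6.10})). Since $\widehat X_{t}=0$, evaluating at $s=t$ gives $Y^{1}_{t}-Y^{2}_{t}=\widehat Y_{t}=\mathcal V_{t}$. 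Inserting the affine ansatz back into the system and separating the terms that do not involve $\widehat X$, one finds that $\mathcal U$ solves a Riccati-type backward equation, while $\mathcal V$ solves a \emph{homogeneous} linear BSDE with jumps whose terminal value is $R\ge0$ and whose jump kernel is, as the computation shows, exactly $d_{s}(\cdot)\ge0$ (the forward--backward coupling, because the forward coefficients in (\ref{equ6.4}) do not depend on $K$, only modifies the drift and diffusion coefficients of $\mathcal V$, not its jump kernel). The positivity property for linear BSDEs with jumps with jump kernel $>-1$ then yields $\mathcal V_{s}\ge0$ for all $s\in[t,t+\delta]$, hence $Y^{1}_{t}\ge Y^{2}_{t}$, $\mathrm P$-a.s. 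As a heuristic cross-check, the same conclusion is suggested by a Picard iteration for FBSDE (\ref{equ6.4}) with terminal $\Phi_{1}$ initialized at $(X^{2},Y^{2},Z^{2},K^{2})$: the first iterate has forward part $X^{2}$, so its backward part dominates $Y^{2}$ by the comparison theorem for BSDEs with jumps; the decoupling argument above is the device that forces this inequality to survive in the limit.

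The main obstacle is the forward--backward coupling combined with the absence of a comparison principle for the forward equation. The two systems genuinely have different forward processes once $(Y^{i},Z^{i})$ are substituted into $b,\sigma,h$, and there is no order relation between $X^{1}$ and $X^{2}$ in $\mathbb R^{n}$, so one cannot simply compare the terminal data pathwise or propagate a naive iteration. The identity $\widehat X_{t}=0$ together with the affine decoupling is precisely what neutralizes this: it removes the $\widehat X$-contribution to $\widehat Y_{t}$ and leaves a genuinely decoupled scalar BSDE for which the jump positivity argument applies cleanly. Consequently the two delicate points are: (i) justifying the affine decoupling $\widehat Y_{s}=\mathcal U_{s}\widehat X_{s}+\mathcal V_{s}$ and the resulting dynamics of $\mathcal V$ for merely progressively measurable coefficients on $[t,t+\delta]$, which rests on the small-time well-posedness of Theorem \ref{pro6.3} and on the smallness of $L_{\sigma}$, $\tilde C_{h}$ and $\delta$ (these guarantee in particular that the algebraic relation expressing $\widehat Z$ through $\mathcal U$, $\widehat X$ and the $\mathcal V$-part is invertible); and (ii) the careful bookkeeping of the $O(L_{\sigma})$, $O(L_{h}(\cdot))$ and $O(\delta)$ error terms in the reduction, which is needed to confirm that the jump kernel entering $\mathcal V$ stays $>-1$ — here this is automatic since it equals $d_{s}(\cdot)\ge0$.
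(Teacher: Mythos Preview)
Your strategy is genuinely different from the paper's. After the same linearization step, the paper does \emph{not} use a decoupling field; instead it introduces a \emph{dual} linear FBSDE $(P,Q,M,N)$ with forward part $P$ starting at $P_t=1$ and backward part $Q$ with terminal $Q_{t+\delta}=-\bar\Phi P_{t+\delta}$. It\^o's formula applied to $\hat X_sQ_s+\hat Y_sP_s$ gives the clean representation $\hat Y_t=E[(\Phi_1(X^2_{t+\delta})-\Phi_2(X^2_{t+\delta}))\,P_{t+\delta}\mid\mathcal F_t]$, so the whole proof reduces to $P_{t+\delta}\ge0$. This is obtained by a stopping-time argument: at $\tau=\inf\{s>t:P_s\le0\}\wedge(t+\delta)$ one has $P_{\tau-}\ge0$, and since the jumps of $P$ come only through $g^4_sP_{s-}l(e)$ with $g^4_s\ge0$, $l\ge0$, also $\Delta P_\tau\ge0$; hence $P_\tau=0$ on $\{\tau<t+\delta\}$, and restarting the dual FBSDE there with zero initial data, uniqueness (Theorem~\ref{pro6.3}) forces $P\equiv0$ on $(\tau,t+\delta]$, giving $P_{t+\delta}\ge0$. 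The positivity of your $d_s(\cdot)$ is exactly what controls the sign of $\Delta P_\tau$ in this adjoint picture, not a BSDE-comparison kernel condition.

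Your route could in principle work, but the step you flag as delicate is a real gap, not a formality. Existence of the affine field $\hat Y_s=\mathcal U_s\hat X_s+\mathcal V_s$ does follow from linearity plus uniqueness, but what you need is the \emph{semimartingale dynamics} of $\mathcal V$. Obtaining these amounts to solving a backward stochastic Riccati equation with jumps for $\mathcal U$ (quadratic in $\mathcal U$, requiring the algebraic inversion $(1-\mathcal U_s\sigma^3_s)^{-1}$ for $\hat Z$ and its jump analogue), and then applying It\^o to $\hat Y_s-\mathcal U_s\hat X_s$ with the jump cross-variation $\sum\Delta\mathcal U_s\,\Delta\hat X_s$ tracked carefully. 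None of this machinery is available in the paper, and it is substantially heavier than the paper's duality, which only needs well-posedness of a second \emph{linear} FBSDE of the same type plus a three-line stopping-time computation. Your assertion that the jump kernel of $\mathcal V$ is exactly $d_s(\cdot)$ is plausible, but only after the coupling contributions $(\mathcal U_{s-}+\Theta^U_s(e))(h^2_s\mathcal V_{s-}+h^3_s\Gamma^V_s)$ arising from $\hat K_s(e)$ have been correctly pushed into the drift; that bookkeeping is precisely what is missing.
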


The proof is similar to that of Theorem 4.1 in Wu \cite{W2003}; we
sketch it. For notational simplification, we assume $d = n = 1$.
\begin{proof} We define $(\hat{X},\hat{Y},\hat{Z},\hat{K}) :=(
X^1-X^2,\ Y^1-Y^2, Z^1-Z^2, K^1-K^2).$ Then $(\hat{X},
\hat{Y},\hat{Z},\hat{K})$ satisfies the following FBSDE:
\begin{equation}\label{equ6.6}\left \{ \begin{array}{llll}d\hat{X}_s &=& (b^1_s\hat{X}_s + b^2_s\hat{Y}_s + b^3_s\hat{Z}_s)ds + (\sigma^1_s\hat{X}_s + \sigma^2_s\hat{Y}_s+\sigma^3_s\hat{Z}_s)dB_s+\int_E(h^1_s\hat{X}_{s-} + h^2_s\hat{Y}_{s-} + h^3_s\hat{Z}_s)\tilde{\mu}(dsde),\\
d\hat{Y}_s &=& -(g^{1}_s\hat{X}_s + g^{2}_s\hat{Y}_s + g^{3}_s\hat{Z}_s+ g^{4}_s\int_E\hat{K}_s(e)l(e)\lambda(de))ds+\hat{Z}_sdB_s+\int_E\hat{K}_s(e)\tilde{\mu}(dsde), \\
\hat{X}_t& =& 0,\\
\hat{Y}_{t+\delta}&=& \bar{\Phi}\hat{X}_{t+\delta}+
\Phi_1(X^2_{t+\delta})-\Phi_2(X^2_{t+\delta}),
\end{array}\right.\end{equation} where
$$\begin{array}{llll}
&&\bar{\Phi
}=\begin{cases}\frac{\Phi^1(X^1_{t+\delta})-\Phi^1(X^2_{t+\delta})}{X^1_{t+\delta}-X^2_{t+\delta}},\qquad \quad \    \  \hat{X}_{t+\delta}\neq0,\\
0, \hskip3.7cm  \mbox{otherwise};\end{cases}\
l^1_s =\begin{cases}
\frac{l(s,X^1_s,Y^1_s,Z^1_s)-l(s,X^2_s,Y^1_s,Z^1_s)}{X^1_s-X^2_s},\  \ \ \  \hat{X}_s\neq 0,\\
0, \hskip3,7cm  \mbox{otherwise};\end{cases}\\
&&l^2_s =\begin{cases} \frac{l(s,X^2_s,Y^1_s,Z^1_s)-l(s,X^2_s,Y^2_s,Z^1_s)}{Y^1_s-Y^2_s},\  \ \ \ \hat{Y}_s \neq 0,\\
0,\hskip3.7cm  \mbox{otherwise};\end{cases}\
l^3_s =\begin{cases} \frac{l(s,X^2_s,Y^2_s,Z^1_s)-l(s,X^2_s,Y^2_s,Z^2_s)}{Z^1_s -Z^2_s},\  \ \ \  \hat{Z}_s \neq0,\\
0, \hskip3.7cm  \mbox{otherwise};\end{cases}\\
\end{array}$$
where $l(\cdot) = b(\cdot),\ \sigma(\cdot),\ h(\cdot,e),$ respectively, when $l=h$, in the above
representation, $X^1_s,\ X^2_s,\ Y^1_s,\ Y^2_s$ become  $X^1_{s-},\
X^2_{s-},\ Y^1_{s-},\ Y^2_{s-}$, respectively, and
$$\begin{array}{llll}
&&g^{1}_s =\begin{cases}
\frac{g(s,X^1_s,Y^1_s,Z^1_s,K^1_s)-g(s,X^2_s,Y^1_s,Z^1_s,K^1_s)}{X^1_s-X^2_s},\ \ \ \hat{X}_s\neq 0,\\
0, \hskip4.7cm \mbox{otherwise};\end{cases}\
g^{2}_s =\begin{cases} \frac{g(s,X^2_s,Y^1_s,Z^1_s,K^1_s)-g(s,X^2_s,Y^2_s,Z^1_s,K^1_s)}{Y^1_s-Y^2_s},\ \ \ \hat{Y}_s \neq 0,\\
0,\hskip4.7cm \mbox{otherwise};\end{cases}\\
&&g^{3}_s =\begin{cases} \frac{g(s,X^2_s,Y^2_s,Z^1_s,K^1_s)-g(s,X^2_s,Y^2_s,Z^2_s,K^1_s)}{Z^1_s -Z^2_s},\ \ \ \hat{Z}_s \neq0,\\
0, \hskip4.7cm \mbox{otherwise};\end{cases}\\
&&g^{4}_s =\begin{cases} \frac{g(s,X^2_{s},Y^2_{s},Z^2_s,K^1_s)-g(s,X^2_{s},Y^2_{s},Z^2_s,K^2_s)}{\int_EK^1_s(e)l(e)\lambda(de) -\int_EK^2_s(e)l(e)\lambda(de)},\  \int_E\hat{K}_s(e)l(e)\lambda(de) \neq0,\\
0, \hskip4.7cm \mbox{otherwise}.\end{cases}\\
\end{array}$$
It's easy to check that (\ref{equ6.6}) satisfies $( \mathbf{H2.1}),\ ( \mathbf{H3.1}),\ ( \mathbf{H3.3})$. Therefore, from Proposition
\ref{pro6.3}, there exists a constant $0 < \delta_1\leq \delta_0,$
such that for every $0 \leq\delta \leq \delta_1$, (\ref{equ6.6}) has a
unique solution $(\hat{X} , \hat{Y} , \hat{Z}, \hat{K})$ on $[t,
t+\delta]$. Now we want to prove $\hat{Y}_t\geq0.$ For this, we introduce
the dual FBSDE with jumps
\begin{equation}\label{equ6.7}\left \{ \begin{array}{llll}dP_s& =& (g^{2}_sP_s -b^2_sQ_s-
\sigma^2_sM_s- h^2_sN_s)ds +
(g^{3}_sP_s -b^3_s Q_s- \sigma^3_sM_s- h^3_sN_s)dB_s+\int_Eg_s^4P_{s-}l(e)\widetilde{\mu}(dsde), \\
dQ_s&=& (g^{1}_s P_s -b^1_s Q_s- \sigma^1_sM_s- h^1_sN_s)ds+M_sdB_s+\int_EN_{s-}(e)\widetilde{\mu}(dsde),\\
P_t &=& 1, \\
Q_{t+\delta}& = &-\bar{\Phi}P_{t+\delta}.\end{array}\right.
\end{equation} Notice that also  (\ref{equ6.7}) satisfies $( \mathbf{H2.1}),\ ( \mathbf{H3.1}),\ ( \mathbf{H3.3})$. Consequently, due to
Theorem \ref{pro6.3}, there exists a constant $0 < \delta_2 \leq
\delta_1$, such that for every $0\leq \delta \leq \delta_2$,
(\ref{equ6.7}) has a unique solution $(P,Q,M,N)$ on $[t, t +
\delta]$. Applying It\^{o}'s formula to $\hat{X}_sQ_s+
\hat{Y}_sP_s$, we deduce from the equations (\ref{equ6.6}) and
(\ref{equ6.7}),
$$\hat{Y}_t = E[(\Phi_1(X^2_{t+\delta}) -
\Phi_2(X^2_{t+\delta}))P_{t+\delta}|\mathcal {F}_t]. $$ Since
$\Phi_1(X^2_{t+\delta})\geq \Phi_2(X^2_{t+\delta}),$ \mbox{P-a.s.},
if we can prove $P_{t+\delta}\geq 0,$ \mbox{P-a.s.}, then we  get
$\hat{Y}_t \geq0,$ \mbox{P-a.s.} For this we define the following
stopping time: $\tau = \inf \{s > t : P_s \leq0\} \wedge ( t +
\delta ) $. So, $\tau \leq t+\delta,\ a.s.$ and $P_{\tau-}\geq0.$ In
the first equation of (\ref{equ6.7}), the jumps of
$P_t$ are only produced by the random measure $\mu$, from $(
\mathbf{H2.1})$-(ii) and $l\geq0$ on $E$, $$\Delta P_\tau \geq 0, \
P_\tau=P_{\tau-}+\Delta P_\tau \geq0.$$ Therefore, $P_\tau=0,$ when $\tau<
t+\delta,$ and $P_\tau\geq0$, when $\tau=t+\delta.$ Consider the
following FBSDE on $[\tau, t + \delta]$:
\begin{equation}\label{equ6.8}\left \{ \begin{array}{llll}
d \tilde{P}_s &=& (g^{2}_s \tilde{P}_s - b^2_s \tilde{Q}_s
-\sigma^2_s \tilde{M}_s-h^2_s\tilde{N}_s)ds + (g^{3}_s \tilde{P}_s-
b^3_s \tilde{Q}_s -\sigma^3_s
\tilde{M}_s-h^3_s\tilde{N}_s)dB_s+\int_Eg_s^4\tilde{P}_{s-}l(e)\widetilde{\mu}(dsde), \\
d\tilde{Q}_s&=& (g^{1}_s \tilde{P}_s - b^1_s \tilde{Q}_s -
\sigma^1_s\tilde{M}_s-h^1_s\tilde{N}_s)ds - \tilde{M}_sdB_s-\int_E\tilde{N}_{s-}\widetilde{\mu}(dsde), \\
\tilde{P}_\tau  &=& 0, \\
\tilde{M}_{t+\delta} &=&
-\bar{\Phi}\tilde{P}_{t+\delta}.
\end{array}\right.\end{equation}
Due to Theorem \ref{pro6.3} there exists $0 < \delta_3 \leq
\delta_2$ such that for every $ 0 \leq \delta \leq \delta_3$,
(\ref{equ6.8}) has a unique solution
$(\tilde{P},\tilde{Q},\tilde{M}_s,\tilde{N}_s)$ on $[\tau, t +
\delta]$. Clearly, $( \tilde{P}_s,
\tilde{Q}_s,\tilde{M}_s,\tilde{N}_s)\equiv (0, 0, 0,0)$ is the
unique solution of (\ref{equ6.8}). Let
$$\begin{array}{llll}&&\bar{P}_s =
I_{[t,\tau ]}(s)P_s + I_{(\tau,t+\delta]}(s) \tilde{P}_s,\ \ \ \ \bar{Q}_s= I_{[t,\tau ]}(s)Q_s+ I_{(\tau,t+\delta]}(s)\tilde{Q}_s, \\
&&\bar{M}_s = I_{[t,\tau ]}(s)M_s + I_{(\tau,t+\delta]}(s)\tilde{M}_s,\ \bar{N}_s = I_{[t,\tau ]}(s)N_s +
I_{(\tau,t+\delta]}(s)\tilde{N}_s,\  \ s \in[t, t +
\delta].\end{array}$$ Considering that $P_\tau=0$ on
$\{\tau<t+\delta\}$, it's easy to show that $(\bar{P},
\bar{Q},\bar{M},\bar{N})$ is a solution of FBSDE (\ref{equ6.7}).
Therefore, from the uniqueness of solution of FBSDE (\ref{equ6.7})
on $[t, t + \delta]$, where $0\leq \delta \leq \delta_3,$ we have
$\bar{P}_t = P_t = 1 > 0$. Furthermore, from the definition of
$\tau$ we have $\bar{P}_{t+\delta} \geq 0$, \mbox{P-a.s.}, that is,
$P_{t+\delta}\geq 0,$ \mbox{P-a.s.} Therefore, we have $Y^1_t\geq
Y^2_t,\ \mbox{P-a.s.}$

\end{proof}

In order to derive some regularity results, we need the following condition:
\begin{description}
\item[$( \mathbf{H3.4})$]  For any
$t\in[0,T]$,
for any $(x,y,z)\in \mathbb{R}^{n}\times \mathbb{R}\times \mathbb{R}^{d},\  \mbox{P-a.s.},$ $|h(t,x,y,z,e)|\leq \rho(e)(1+|x|+|y|),$ where $\rho(e)=C(1\wedge|e|).$
\end{description}

\begin{theorem}
\label{pro6.4} Let $\Phi$ be deterministic, and suppose the
assumptions $( \mathbf{H2.1}),\ ( \mathbf{H3.1}),\ ( \mathbf{H3.3}),\ ( \mathbf{H3.4})$
hold true. Then, for every $p\geq 2,$ there exists a sufficiently
small constant $\tilde{\delta} > 0$, only depending on the Lipschitz
constants $K$ and $L_\sigma,\ L_h(\cdot),$ and some constant $\tilde
{C}_{p,K}$, only depending on $p$, the Lipschitz constants $K,\
L_\sigma,\ L_h(\cdot)$ and the linear growth constant $L$, such that
for every $0\leq \delta \leq \tilde{\delta}$ and $\zeta \in
L^{p}(\Omega,\mathcal{F}_{t}, P;\mathbb{R}^{n}),$
\[%
\begin{array}
[c]{llll}%
{\rm(i)} &  & E [ \mathop{\rm sup}\limits_{t\leq s\leq t+\delta}
|X^{t,\zeta}_{s} |^{p} + \mathop{\rm sup}\limits_{t\leq s\leq
t+\delta} |Y^{t,\zeta}_{s} |^{p} + (\int_{t}^{t+\delta}
|Z^{t,\zeta}_{s}|^{2}ds)^{\frac{p}{2}} \\
&&+( \int_{t}^{t+\delta}\int_{E} |K^{t,\zeta
}_{s}(e)|^{2}\lambda(de)ds)^{\frac{p}{2}}|\mathcal{F}_{t}]\leq \tilde{C}%
_{p,K}(1 + |\zeta|^{p}),\ \mbox{P-a.s.}; & \\
{\rm(ii)} &  & E [ \mathop{\rm sup}\limits_{t\leq s\leq t+\delta}
|X^{t,\zeta}_{s} -\zeta|^{p} | \mathcal{F}_{t} ] \leq
\tilde{C}_{p,K}\delta(1 +
|\zeta|^{p}),\ \mbox{P-a.s.}, & \\
{\rm(iii)} &  & E [ ( \int_{t}^{t+\delta} |Z^{t,\zeta}_{s}%
|^{2}ds)^{\frac{p}{2}} +( \int_{t}^{t+\delta}\int _{E}
|K^{t,\zeta}_{s}(e)|^{2}\lambda(de)ds)^{\frac{p}{2}} |
\mathcal{F}_{t} ] \leq \tilde{C}_{p,K}\delta^{\frac{p}{2}}(1 +
|\zeta|^{p}),\ \mbox{P-a.s.}, &
\end{array}
\]
where $(X^{t,\zeta}_{s} , Y^{t,\zeta}_{s}
,Z^{t,\zeta}_{s},K^{t,\zeta}_{s} )_{s\in[t,t+\delta]}$ is the
solution of FBSDE (\ref{equ6.4}) associated with $(b,\sigma,g,
\zeta, \Phi)$ and with the time horizon $t + \delta.$
\end{theorem}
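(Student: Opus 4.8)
The plan is to run a closed loop on the small interval $[t,t+\delta]$ between an $L^{p}$-estimate for the backward component and a Burkholder--Davis--Gundy/H\"older estimate for the forward component, breaking the circularity by a pointwise-in-time bound on $Z^{t,\zeta}$ and $K^{t,\zeta}$ drawn from the Markovian structure; the jump martingale in the forward estimate is handled by the Young-inequality computation already used in the proof of Proposition \ref{pro6.2} (cf.\ (\ref{ee6.30})--(\ref{ee6.11111}) and Lemma \ref{inequality}). First I collect the preliminaries: for $\delta\le\delta_{0}$ the solution of FBSDE (\ref{equ6.4}) exists and is unique by Theorem \ref{pro6.3}, and its $p$-th moments are finite (re-run the contraction of Theorem \ref{pro6.3} in the corresponding $L^{p}$-spaces, admissible since the coefficients are of linear growth and $L_{\sigma}$ is small, possibly after shrinking $\tilde\delta$; or truncate $\zeta$ and use continuous dependence). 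As in Theorem \ref{th6.1}, $Y^{t,\zeta}_{s}=\bar u(s,X^{t,\zeta}_{s})$ on $[t,t+\delta]$, where $\bar u(s,x)$ is the value at time $s$ of FBSDE (\ref{equ6.4}) started from $x$ with horizon $t+\delta$; arguing as in the continuous-dependence part of the proof of Theorem \ref{pro6.3} (compare Remark \ref{re6.1}, valid here by Remark \ref{re6.4}), $\bar u(s,\cdot)$ is Lipschitz with linear growth with constants uniform in $s\in[t,t+\delta]$ and $\delta\le\tilde\delta$, and $\bar u$ is deterministic since $\Phi$ is. I abbreviate $\mathcal X:=E[\sup_{[t,t+\delta]}|X^{t,\zeta}_{s}|^{p}\mid\mathcal F_{t}]$, $\mathcal Y:=E[\sup_{[t,t+\delta]}|Y^{t,\zeta}_{s}|^{p}\mid\mathcal F_{t}]$, $\mathcal Z:=E[(\int_{t}^{t+\delta}|Z^{t,\zeta}_{s}|^{2}ds)^{p/2}+(\int_{t}^{t+\delta}\int_{E}|K^{t,\zeta}_{s}(e)|^{2}\lambda(de)ds)^{p/2}\mid\mathcal F_{t}]$ and $B:=E[\sup_{[t,t+\delta]}|X^{t,\zeta}_{s}-\zeta|^{p}\mid\mathcal F_{t}]$.

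The first ingredient is the backward estimate: applying the standard $L^{p}$-estimate for BSDEs with jumps to the backward equation of (\ref{equ6.4}), using the Lipschitz and linear-growth properties of $g$ and $\Phi$, H\"older's inequality in time and BDG for the jump term (passing from $\mu$ to $\lambda\,ds$ in the bracket via Lemma \ref{inequality}), and absorbing on the left the $Z,K$-terms carrying $\delta^{p/2}$ and the $Y$-terms carrying $\delta^{p}$, one obtains $\mathcal Y+\mathcal Z\le C_{p}(1+\mathcal X)$. The second ingredient is the pointwise bound on $Z,K$: from $Y^{t,\zeta}_{r}=\bar u(r,X^{t,\zeta}_{r})$, the Lipschitz-in-$x$ and $\tfrac12$-H\"older-in-$r$ regularity of $\bar u$, the $L^{2}$-isometry for the martingale increment $\int_{s}^{s+\varepsilon}Z^{t,\zeta}dB+\int_{s}^{s+\varepsilon}\int_{E}K^{t,\zeta}\tilde\mu$ (which is the purely-martingale part of $Y^{t,\zeta}_{s+\varepsilon}-Y^{t,\zeta}_{s}$), and the elementary bound $E[|X^{t,\zeta}_{s+\varepsilon}-X^{t,\zeta}_{s}|^{2}\mid\mathcal F_{s}]\le C\varepsilon\,E[\sup_{[s,s+\varepsilon]}(1+|X^{t,\zeta}|^{2}+|Y^{t,\zeta}|^{2})\mid\mathcal F_{s}]$, a Lebesgue-differentiation argument in $s$ yields $|Z^{t,\zeta}_{s}|^{2}+\int_{E}|K^{t,\zeta}_{s}(e)|^{2}\lambda(de)\le C(1+|X^{t,\zeta}_{s}|^{2}+|Y^{t,\zeta}_{s}|^{2})$ for a.e.\ $s$, whence $\mathcal Z\le C_{p}\,\delta^{p/2}(1+\mathcal X+\mathcal Y)$.

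Now the forward estimate and the loop. Applying BDG and H\"older to $X^{t,\zeta}_{s}-\zeta=\int_{t}^{s}b\,dr+\int_{t}^{s}\sigma\,dB_{r}+\int_{t}^{s}\int_{E}h\,\tilde\mu(drde)$, using the linear growth of $b$, the bound $|\sigma|\le L(1+|X^{t,\zeta}|+|Y^{t,\zeta}|)+L_{\sigma}|Z^{t,\zeta}|$ (from $(\mathbf{H3.1})$ and $(\mathbf{H3.3})$) and $(\mathbf{H3.4})$ for $h$, and estimating $(\int_{t}^{t+\delta}\int_{E}|h|^{2}\mu(drde))^{p/2}$ exactly as in the proof of Proposition \ref{pro6.2} (split $|h|^{2}\le C(1\wedge|e|^{2})(1+|\zeta|^{2}+|X^{t,\zeta}_{r-}-\zeta|^{2}+|Y^{t,\zeta}_{r-}|^{2})$ and treat the three pieces by the Young/Gronwall argument leading to (\ref{ee6.11111})), one gets an inequality for $B$ whose right-hand side, after inserting $\mathcal X\le C(|\zeta|^{p}+B)$, the backward estimate ($\mathcal Y\le C_{p}(1+|\zeta|^{p}+B)$) and the refined bound $\mathcal Z\le C_{p}\delta^{p/2}(1+|\zeta|^{p}+B)$ (used precisely to control the $L_{\sigma}|Z^{t,\zeta}|$-part of $\sigma$), consists only of terms of the form $C_{p}\delta(1+|\zeta|^{p})$ and $C_{p}\delta\,B$; choosing $\tilde\delta$ small (with $L_{\sigma}$ small) to absorb the latter gives $B\le C_{p}\delta(1+|\zeta|^{p})$, i.e.\ (ii). Then $\mathcal X\le C(|\zeta|^{p}+B)\le C_{p}(1+|\zeta|^{p})$, hence by the backward estimate $\mathcal Y+\mathcal Z\le C_{p}(1+|\zeta|^{p})$, which gives (i); finally feeding (i) into $\mathcal Z\le C_{p}\delta^{p/2}(1+\mathcal X+\mathcal Y)$ gives (iii).

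The hard part will be the pointwise control of $Z^{t,\zeta}$ and $K^{t,\zeta}$ (and the regularity of $\bar u$ underlying it). Unlike the case without jumps, the sharp factor $\delta^{p/2}$ in (iii) cannot be reached by the soft route — BDG applied to the martingale part $\int Z^{t,\zeta}dB+\int\int K^{t,\zeta}\tilde\mu$ of $Y^{t,\zeta}$ together with Lemma \ref{inequality} — which only yields $O(\delta)$, exactly the loss recorded in Remark \ref{wrong}; one is forced to exploit the Markov identity $Y^{t,\zeta}_{s}=\bar u(s,X^{t,\zeta}_{s})$. Moreover this refined $\delta^{p/2}$-bound on $\int_{t}^{t+\delta}|Z^{t,\zeta}_{s}|^{2}ds$ is precisely what lets the $L_{\sigma}|Z^{t,\zeta}|$-term of $\sigma$ be absorbed so as to recover the clean factor $\delta$ (rather than $\delta+L_{\sigma}^{p}$) in estimate (ii).
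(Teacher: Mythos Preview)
Your treatment of (i) and (ii) is close to the paper's, but you both complicate it and introduce a circularity by calling on the ``refined'' bound $\mathcal Z\le C_p\delta^{p/2}(1+|\zeta|^p+B)$ at that stage. The paper does not need it: from the centred backward estimate (It\^o on $e^{\beta r}|Y_r-\Phi(\zeta)|^2$, then Doob to pass to $L^p$) one gets $E[\sup|\tilde Y|^p\mid\mathcal F_t]\le C_pB+C_p\delta^{p/2}(1+|\zeta|^p)$ and, via BDG and Lemma~\ref{inequality}, the \emph{non-refined} bound $\mathcal Z\le C_pB+C_p\delta^{p/2}(1+|\zeta|^p)$. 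Plugging these into the forward estimate yields $B\le C_p\delta(1+|\zeta|^p)+C_p(\delta+\delta^{p/2}+L_\sigma^p)B$, and the term $C_pL_\sigma^pB$ is absorbed on the left because $L_\sigma$ is small. The ``clean'' factor $\delta$ in (ii) therefore comes out without any sharp a priori control on $\int|Z|^2$; your last paragraph misdiagnoses this.

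For (iii) the two arguments diverge, and yours has a genuine gap. The paper does \emph{not} go through any pointwise bound on $(Z,K)$ or through time-regularity of the value function. It removes the jump obstruction by the decomposition $\theta_s=\int_t^s\!\int_E h\,\mu(drde)$, $\widetilde X=X-\theta$, $\widehat Y_s=\int_t^s\!\int_E\eta_r\,\tilde\mu(drde)$ with $\eta_s(e)=\Phi(\zeta+\theta_{s-}+h)-\Phi(\zeta+\theta_{s-})$, $\widetilde Y=Y-\widehat Y-\Phi(\zeta)$, $\widetilde K=K-\eta$. Then $\widetilde X$ has \emph{no} jump martingale part (the $\tilde\mu$-integral becomes a $\lambda(de)ds$-integral), so $E[\sup|\widetilde X-\zeta|^p\mid\mathcal F_t]\le C_p\delta^{p/2}(1+|\zeta|^p)+C_pL_\sigma^pE[(\int|Z|^2)^{p/2}\mid\mathcal F_t]$; a BDG estimate on the transformed BSDE for $\widetilde Y$ then gives $E[(\int|Z|^2)^{p/2}+(\int\!\!\int|\widetilde K|^2\mu)^{p/2}\mid\mathcal F_t]\le C_p\delta^{p/2}(1+|\zeta|^p)+C_pL_\sigma^pE[(\int|Z|^2)^{p/2}\mid\mathcal F_t]$, and smallness of $L_\sigma$ closes the loop. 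This is the key idea you are missing.

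By contrast, your route to (iii) rests on the $\tfrac12$-H\"older-in-time regularity of $\bar u$, which you assert as a preliminary but do not prove. Establishing it requires $E[|X^{s,x}_{s+\varepsilon}-x|^2]=O(\varepsilon)$, and since $\sigma$ depends on $Z$ this in turn needs $E[\int_s^{s+\varepsilon}|Z^{s,x}_r|^2dr]=O(\varepsilon)$ --- essentially (iii) for $p=2$ on each subinterval, not merely on $[t,t+\delta]$. One can try to close this self-consistently (absorbing the $L_\sigma^2\!\int|Z|^2$ contribution), but that argument is at least as delicate as the paper's decomposition and is entirely hidden in your phrase ``the $\tfrac12$-H\"older-in-$r$ regularity of $\bar u$''. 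Moreover, even granting the incremental $L^2$ bound, a Lebesgue-differentiation step yields only an $\mathcal F_s$-conditional inequality, not the $\omega$-wise bound $|Z_s|^2+\int_E|K_s|^2\lambda(de)\le C(1+|X_s|^2+|Y_s|^2)$ you state; turning that into the $L^p$ estimate (iii) needs extra work (e.g.\ Doob on the resulting supermartingale bound), which you also omit.
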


\begin{proof}Without loss of generality, we restrict ourselves to the proof for $p=2k,\ k\in \mathbb{Z}^+$.

Due to Theorem \ref{pro6.3}, there exists a constant $\delta_0
> 0$ depending on $K,\
L_\sigma,\ \tilde{C}_h$, such that for every $0\leq \delta \leq
\delta_0,$ (\ref{equ6.4}) has a unique solution on $[t, t +
\delta]$, i.e.,
\begin{equation}\label{ee6.12}Y^{t,\zeta}_s  =
\Phi(X^{t,\zeta}_{t+\delta}) + \int_s^{t+\delta} g(r,X^{t,\zeta}_r ,
Y^{t,\zeta}_r ,Z^{t,\zeta}_r,K^{t,\zeta}_r)dr - \int_s^{t+\delta}
Z^{t,\zeta}_r dB_r-   \int_s^{t+\delta}\int_E K^{t,\zeta}_r(e)
\tilde{\mu}(drde).
\end{equation} Set $\tilde{Y}^{t,\zeta}_s  = Y^{t,\zeta}_s -
\Phi(\zeta ).$ For any $\beta \geq0$,  by applying It\^{o}'s formula
to $e^{\beta s}|\tilde{Y}^{t,\zeta}_s|^2$,  taking $\beta$ large
enough, using BSDE standard methods, and by considering that $|g(r,
\zeta , \Phi(\zeta ), 0,0)| \leq C(1 + |\zeta |)$, we get
\begin{equation}\label{ee6.13}\begin{array}{llll}&& |\tilde{Y}^{t,\zeta}_s |^2 + E[
\int_s^{t+\delta} (|\tilde{Y}^{t,\zeta}_r|^2 + |Z^{t,\zeta}_r |^2+
\int_E|K^{t,\zeta}_r(e)
|^2\lambda(de))dr|\mathcal {F}_s] \\
&\leq& CE[\sup \limits_{s\leq r\leq t+\delta} |X^{t,\zeta}_r -\zeta
|^2|\mathcal {F}_s] + C(t + \delta- s)(1 + |\zeta |^2),\
\mbox{P-a.s.},\end{array}\end{equation} where $C$ only depends on
$K$ and $L$. Therefore, from (\ref{ee6.12}) and (\ref{ee6.13}) and
Burkholder-Davis-Gundy inequality, \begin{equation}\label{ee6.14}E[
\sup \limits_{t\leq s\leq t+\delta} |\tilde{Y}^{t,\zeta}_s
|^2|\mathcal {F}_t] \leq CE[ \sup \limits_{t\leq r\leq t+\delta}
|X^{t,\zeta}_r - \zeta |^2|\mathcal {F}_t] + C\delta(1 + |\zeta
|^2),\  \mbox{P-a.s.}
\end{equation} On the other hand, from (\ref{ee6.13}) \begin{equation}\label{ee6.15}|\tilde{Y}^{t,\zeta}_s
|^2\leq CE[ \sup \limits_{t\leq r\leq t+\delta}|X^{t,\zeta}_r -
\zeta |^2|\mathcal {F}_s] + C\delta(1 + |\zeta |^2),\
\mbox{P-a.s.},\ t \leq s \leq t+\delta.
\end{equation}
When $p
> 2$, we define $\eta = \sup \limits_{t\leq r\leq t+\delta} |X^{t,\zeta}_r- \zeta |^2 \in L^2(\Omega,
\mathcal {F}_{t + \delta},P;\mathbb{R}^n).$ Then $M_s := E[\eta|\mathcal {F}_s], s
\in [t, t + \delta],$ is a martingale, and from Doob's martingale
inequality we have
\begin{equation}\label{ee6.16}E[\sup \limits_{t\leq s\leq t+\delta}
|M_s|^ {\frac{p}{2}} |\mathcal {F}_t]\leq C_pE[|M_{t+\delta}|^
{\frac{p}{2}} |\mathcal {F}_t]\leq C_pE[\eta^ {\frac{p}{2}}
|\mathcal {F}_t] = C_pE[\sup \limits_{t\leq r\leq t+\delta}
|X^{t,\zeta}_r-\zeta |^p|\mathcal {F}_t],\  \mbox{P-a.s.}
\end{equation}
Therefore, from (\ref{ee6.15}) and (\ref{ee6.16})
\begin{equation}\label{ee6.17}E[ \sup \limits_{t\leq s\leq t+\delta}
|\tilde{Y}^{t,\zeta}_s |^p|\mathcal {F}_t] \leq C_pE[ \sup
\limits_{t\leq r\leq t+\delta} |X^{t,\zeta}_r -\zeta |^p|\mathcal
{F}_t] + C_p\delta^{\frac{p}{2}} (1 + |\zeta |^p),\ \mbox{P-a.s.}
\end{equation} Now we consider $$Y^{t,\zeta}_s - \Phi(\zeta ) =
\Phi(X^{t,\zeta}_{t+\delta})- \Phi(\zeta ) +
\int_s^{t+\delta}g(r,X^{t,\zeta}_r , Y^{t,\zeta}_r
,Z^{t,\zeta}_r,K^{t,\zeta}_r )dr- \int_s^{t+\delta}Z^{t,\zeta}_r
dB_r-  \int_s^{t+\delta}\int_EK^{t,\zeta}_r(e) \tilde{\mu}(dsde).$$
From Burkholder-Davis-Gundy inequality and (\ref{ee6.1111}), (\ref{ee6.17}),
\begin{equation}\label{BDG}\begin{array}{llll}&&
E [ ( \int_t^{t+\delta} |Z^{t,\zeta}_s |^2ds)^{\frac{p}{2}} |
\mathcal {F}_t ]+E [ ( \int_t^{t+\delta}\int_E |K^{t,\zeta}_s(e)
|^2\lambda(de)ds)^{\frac{p}{2}} | \mathcal {F}_t ]\\
&\leq&E [ ( \int_t^{t+\delta} |Z^{t,\zeta}_s |^2ds)^{\frac{p}{2}} |
\mathcal {F}_t ]+C_pE [ ( \int_t^{t+\delta}\int_E |K^{t,\zeta}_s(e)
|^2\mu(dsde))^{\frac{p}{2}} | \mathcal {F}_t ]\\
&\leq& C_pE [ \sup \limits_{t\leq s\leq t+\delta} | \int_t^s
Z^{t,\zeta}_r dB_r+\int_t^s\int_E K^{t,\zeta}_r(e)
\tilde{\mu}(dsde)|^p |
\mathcal {F}_t]\\
&\leq& C_pE  [ \sup \limits_{t\leq s\leq t+\delta}
|\tilde{Y}^{t,\zeta}_s |^p + ( \int_t^{t+\delta} |g(s,X^{t,\zeta}_s
, Y^{t,\zeta}_s ,Z^{t,\zeta}_s ,K^{t,\zeta}_s )|ds)^p | \mathcal
{F}_t ]\\
&=& C_pE  [ \sup \limits_{t\leq s\leq t+\delta}
|\tilde{Y}^{t,\zeta}_s  |^p | \mathcal {F}_t ] + C_pE [ ( \int_t^{t+\delta} |g(s,X^{t,\zeta}_s , Y^{t,\zeta}_s
,Z^{t,\zeta}_s,K^{t,\zeta}_s)- g(s, \zeta , \Phi(\zeta ), 0,0) \\
&&+
g(s, \zeta ,
\Phi(\zeta ), 0,0)|ds)^p | \mathcal {F}_t ] \\
&\leq& ( C_p + C_p\delta^p ) E [ \sup \limits_{t\leq s\leq t+\delta}
|X^{t,\zeta}_s -\zeta |^p | \mathcal {F}_t] + C_p\delta^{\frac{p}{2}}
(1 + |\zeta |^p) + C_p\delta^{\frac{p}{2}} E [ ( \int_t^{t+\delta}
|Z^{t,\zeta}_s |^2ds)^{\frac{p}{2}} |
\mathcal {F}_t ]\\
&&+C_p\delta^{\frac{p}{2}}E [ ( \int_t^{t+\delta} \int_E
|K^{t,\zeta}_s(e) |^2\lambda(de)ds)^{\frac{p}{2}} | \mathcal {F}_t ].
\end{array}\end{equation}
By choosing $0 < \delta_1\leq \delta_0$ such that $1 -
C_p\delta^{\frac{p}{2}}_1 > 0$, we get, for any $0\leq \delta \leq
\delta_1, \ \mbox{P-a.s.},$
\begin{equation}\label{ee6.18} \begin{array}{llll} &&E [(
\int_t^{t+\delta} |Z^{t,\zeta}_s |^2ds)^{\frac{p}{2}} | \mathcal
{F}_t ]+E [ ( \int_t^{t+\delta}\int_E |K^{t,\zeta}_s(e)
|^2\lambda(de)ds)^{\frac{p}{2}} | \mathcal {F}_t ]\\
&&\leq ( C_p + C_p\delta^p ) E [ \sup \limits_{t\leq s\leq T}|X^{t,\zeta}_s - \zeta |^p | \mathcal {F}_t ] +C_p\delta^{\frac{p}{2}} (1 + |\zeta |^p).\end{array}\end{equation}
Therefore, from the second line and the latter estimate of (\ref{BDG}) we know
\begin{equation}\label{ee6.28}\begin{array}{llll} &&
E [ ( \int_t^{t+\delta} |Z^{t,\zeta}_s |^2ds)^{\frac{p}{2}} |
\mathcal {F}_t ]+E [ ( \int_t^{t+\delta}\int_E |K^{t,\zeta}_s(e)
|^2\mu(dsde)^{\frac{p}{2}} | \mathcal {F}_t ]\\
&&\leq ( C_p + C_p\delta^p ) E [ \sup \limits_{t\leq s\leq T}|X^{t,\zeta}_s - \zeta |^p | \mathcal {F}_t ] +C_p\delta^{\frac{p}{2}} (1 + |\zeta |^p),\ \mbox{P-a.s.}\end{array}\end{equation}
Similarly, equation (\ref{equ6.4}) and the estimates (\ref{ee6.17}),
(\ref{ee6.18}), (\ref{ee6.28})  yield \begin{equation}\label{ee6.29}\begin{array}{llll}&&E [ \sup \limits_{t\leq
r\leq
t+\delta} |X^{t,\zeta}_r - \zeta |^p | \mathcal {F}_t ] \\
&\leq& C_pE [ ( \int_t^{t+\delta} b(r,X^{t,\zeta}_r , Y^{t,\zeta}_r
,Z^{t,\zeta}_r)dr)^p | \mathcal
{F}_t ] + C_pE [ ( \int_t^{t+\delta} |\sigma(r,X^{t,\zeta}_r ,
Y^{t,\zeta}_r ,Z^{t,\zeta}_r)|^2dr)^{\frac{p}{2}}
| \mathcal {F}_t ]
\\&&+ C_pE [ (
\int_t^{t+\delta}\int_E |h(r,X^{t,\zeta}_{r-} , Y^{t,\zeta}_{r-}
,Z^{t,\zeta}_r,e)|^2\mu(drde)^{\frac{p}{2}} |
\mathcal {F}_t
]\\
&\leq& C_pE [ (\int_t^{t+\delta}(1+|X_r^{t,\zeta}-\zeta|+|\zeta|+|Y_r^{t,\zeta}-\Phi(\zeta)|+|Z_r^{t,\zeta}|)dr)^p|
\mathcal {F}_t]\\
&&+C_pE [ (\int_t^{t+\delta}(1+|X_r^{t,\zeta}-\zeta|^2+|\zeta|^2+|Y_r^{t,\zeta}-\Phi(\zeta)|^2+L_\sigma^2|Z_r^{t,\zeta}|^2)dr)^\frac{p}{2}|
\mathcal {F}_t]\\
&&+C_pE [ (
\int_t^{t+\delta}\int_E |h(r,X^{t,\zeta}_{r-} , Y^{t,\zeta}_{r-}
,Z^{t,\zeta}_r,e)|^2\mu(drde))^{\frac{p}{2}} |
\mathcal {F}_t
],
\end{array}\end{equation}
where
\begin{equation}\label{ee6.300}\begin{array}{llll}&&
E [(\int_t^{t+\delta}\int_E |h(r,X^{t,\zeta}_{r-} , Y^{t,\zeta}_{r-}
,Z^{t,\zeta}_r,e)|^2\mu(drde))^{\frac{p}{2}} |
\mathcal {F}_t]\\
&\leq&  E [(\int_t^{t+\delta}\int_EC(1\wedge |e|^2)(1+|X^{t,\zeta}_{r-}|+|Y^{t,\zeta}_{r-}|)^2\mu(drde))^{\frac{p}{2}} |
\mathcal {F}_t]\\
&\leq&  E [(\int_t^{t+\delta}\int_EC_p(1\wedge |e|^2)(1+|X^{t,\zeta}_{r-}|^2+|Y^{t,\zeta}_{r-}|^2)\mu(drde))^{\frac{p}{2}} |
\mathcal {F}_t]\\
&\leq& E [(\int_t^{t+\delta}\int_EC_p(1\wedge |e|^2)(1+|X^{t,\zeta}_{r-}-\zeta|^2+|Y^{t,\zeta}_{r-}-\Phi(\zeta)|^2+|\zeta|^2)\mu(drde))^{\frac{p}{2}} |
\mathcal {F}_t]\\
&\leq&  E [(\int_t^{t+\delta}\int_EC_p(1\wedge |e|^2)\mu(drde))^{\frac{p}{2}} |
\mathcal {F}_t](1+|\zeta|^p)+C_pE[(\int_t^{t+\delta}\int_E(1\wedge |e|^2)|X^{t,\zeta}_{r-}-\zeta|^2\mu(drde))^{\frac{p}{2}} |\mathcal {F}_t]\\
&&
+C_pE[(\int_t^{t+\delta}\int_E(1\wedge |e|^2)|\tilde{Y}^{t,\zeta}_{r-}|^2\mu(drde))^{\frac{p}{2}} |\mathcal {F}_t]
\\
&\leq&C_p\delta(1+|\zeta|^p)+C_p\delta E[\sup\limits_{t\leq s\leq t+\delta}|X^{t,\zeta}_{r-}-\zeta|^p|\mathcal {F}_t]+C_p\delta E[\sup\limits_{t\leq s\leq t+\delta}|\tilde{Y}^{t,\zeta}_{s}|^p|\mathcal {F}_t],
\end{array}\end{equation}
where we have used that \begin{equation}\label{ee111}E[(\int_t^{t+\delta}\int_E(1\wedge |e|^2)|\tilde{Y}^{t,\zeta}_{r-}|^2\mu(drde))^{\frac{p}{2}} |\mathcal {F}_t]\leq C_p\delta E[\sup\limits_{t\leq s\leq t+\delta}|\tilde{Y}^{t,\zeta}_{s}|^p|\mathcal {F}_t].\end{equation}
Indeed, we denote $\gamma_s(e):=(1\wedge |e|^2)|\tilde{Y}^{t,\zeta}_{s}|^2,\ \bar{\gamma}_s(e):=|\tilde{Y}^{t,\zeta}_{s}|^2,\ A_r:=\int_t^{t+\delta}\int_E \gamma_s(e)\mu(dsde)$. Similarly to (\ref{ee6.11111}) in the proof of Proposition \ref{pro6.2} we can prove that (\ref{ee111}).

In the same way, we  have
$$E[|\int_t^r\int_E|X_{s-}^{t,\zeta}-\zeta|^{2}(1\wedge |e|^2)\mu(dsde)|^\frac{p}{2}|\mathcal {F}_t]\leq C_p(r-t)E[\sup\limits_{t\leq s\leq r}|X_s^{t,\zeta}-\zeta|^{p}|\mathcal {F}_t],$$
and $$E[|\int_t^r\int_E(1\wedge |e|^2)\mu(dsde)|^\frac{p}{2}|\mathcal {F}_t]\leq C_p(r-t).$$
From (\ref{ee6.29}), we have \begin{equation}\label{ee6.31}
\begin{array}{llll}
E [ \sup \limits_{t\leq r\leq t+\delta} |X^{t,\zeta}_r - \zeta |^p | \mathcal {F}_t ]&\leq&C_p(1+|\zeta|^p)\delta+C_p(\delta^\frac{p}{2}+L_\sigma^p)E[(\int_t^{t+\delta}|Z^{t,\zeta}_r|^2dr)^\frac{p}{2}| \mathcal {F}_t]\\
&&+C_p\delta E [ \sup \limits_{t\leq r\leq t+\delta} |X^{t,\zeta}_r - \zeta |^p | \mathcal {F}_t ]+C_p\delta E [ \sup \limits_{t\leq r\leq t+\delta} |Y^{t,\zeta}_r -\Phi( \zeta) |^p | \mathcal {F}_t].
\end{array}
\end{equation}
From (\ref{ee6.17}), (\ref{ee6.28}) and (\ref{ee6.31}), we get
\begin{equation}\begin{array}{llll}
&&E [\sup\limits_{t\leq r\leq t+\delta} |X^{t,\zeta}_r - \zeta |^p | \mathcal {F}_t]\\
&\leq &C_p\delta(1+|\zeta|^p)+C_p(\delta+\delta^\frac{p}{2}+\delta^\frac{3p}{2}+L_{\sigma}^p+L_{\sigma}^p\delta^p)E [\sup\limits_{t\leq r\leq t+\delta} |X^{t,\zeta}_r - \zeta |^p | \mathcal {F}_t],\ \mbox{P-a.s.},\ 0\leq\delta\leq \delta_0.
\end{array}\end{equation}
Due to  $L_{\sigma}$ is sufficiently small, we choose $L_{\sigma}$ satisfying $C_pL_{\sigma}^p<1$. Then there exists a constant $0<\delta_2\leq\delta_1$ such that $1-C_p(\delta_2+\delta_2^\frac{p}{2}+\delta_2^\frac{3p}{2}+L_{\sigma}^p+L_{\sigma}^p\delta_2^p)>0$, therefore we get for any $0\leq\delta\leq \delta_2,$ P-a.s.
\begin{equation}\label{ee6.32}
E [\sup\limits_{t\leq r\leq t+\delta} |X^{t,\zeta}_r - \zeta |^p | \mathcal {F}_t]\leq C_p\delta(1+|\zeta|^p).
\end{equation}\\
Furthermore, from (\ref{ee6.17}), (\ref{ee6.18}), (\ref{ee6.28}), and (\ref{ee6.32}),
\begin{equation}\label{ee6.33}
E [\sup\limits_{t\leq s\leq t+\delta} |X^{t,\zeta}_s|^p+\sup\limits_{t\leq s\leq t+\delta} |Y^{t,\zeta}_s|^p+(\int_t^{t+\delta}|Z^{t,\zeta}_s|^2ds)^\frac{p}{2}+(\int_t^{t+\delta}\int_E |K^{t,\zeta}_s(e)|^2\lambda(de)ds)^\frac{p}{2} | \mathcal {F}_t]\leq \tilde{C}_{p,K}(1+|\zeta|^p),\ \mbox{P-a.s.},
\end{equation}
and $$E[(\int_t^{t+\delta}\int_E |K^{t,\zeta}_s(e)|^2\mu(dsde))^\frac{p}{2} | \mathcal {F}_t]\leq \tilde{C}_{p,K}(1+|\zeta|^p),\ \mbox{P-a.s.}$$
 Now we prove (iii). For convenience, we denote
$$\begin{array}{llll}&&\theta_s^{t,\zeta}:=\int_t^s\int_E h(r,X^{t,\zeta}_{r-} , Y^{t,\zeta}_{r-},Z^{t,\zeta}_r,e)\mu(drde),\ \eta_s^{t,\zeta}(e):=\Phi(\zeta+\theta_{s-}^{t,\zeta}+h(s,\Pi_{s-}^{t,\zeta},e))-\Phi(\zeta+\theta_{s-}^{t,\zeta}),\\
&& \widehat{Y}_s^{t,\zeta}:=\int_t^s\int_E\eta_r^{t,\zeta}(e)\tilde{\mu}(drde),\ \widetilde{X}_s^{t,\zeta}:=X_s^{t,\zeta}-\theta_s^{t,\zeta},\  \widetilde{Y}_s^{t,\zeta}:=Y_s^{t,\zeta}-\widehat{Y}_s^{t,\zeta}-\Phi(\zeta),\\
&& \widetilde{K}_s^{t,\zeta}(e):=K_s^{t,\zeta}(e)-\eta_s^{t,\zeta}(e),\ \Pi^{t,\zeta}_{s-}:=(X^{t,\zeta}_{s-},Y^{t,\zeta}_{s-},Z^{t,\zeta}_{s}).
\end{array}$$
We know \begin{equation}\label{ee6.3888}|\eta_s^{t,\zeta}(e)|\leq C|h(s,\Pi_{s-}^{t,\zeta},e)|\leq C(1\wedge |e|^2)(1+|X_{s-}^{t,\zeta}|+|Y_{s-}^{t,\zeta}|).\end{equation}
And it is easy to check
$$
\begin{array}{llll}
&&\Phi(\zeta+\theta_{t+\delta}^{t,\zeta})-\Phi(\zeta)=\sum\limits_{t<s\leq t+\delta}(\Phi(\zeta+\theta_s^{t,\zeta})-\Phi(\zeta+\theta_{s-}^{t,\zeta}))\\
&=&\sum\limits_{t<s\leq t+\delta}\int_E [\Phi(\zeta+\theta_{s-}^{t,\zeta}+h(s,\Pi_{s-}^{t,\zeta},e))-\Phi(\zeta+\theta_{s-}^{t,\zeta})]\mu(\{s\},de)\\
&=&\int_t^{t+\delta}\int_E[\Phi(\zeta+\theta_{s-}^{t,\zeta}+h(s,\Pi_{s-}^{t,\zeta},e))-\Phi(\zeta+\theta_{s-}^{t,\zeta})]\mu(dsde)=\int_t^{t+\delta}\int_E\eta_s^{t,\zeta}(e)\mu(dsde).
\end{array}$$
Therefore, $$\Phi(\zeta+\theta_{t+\delta}^{t,\zeta})-\Phi(\zeta)-\int_t^{t+\delta}\int_E\eta_s^{t,\zeta}(e)\lambda(de)ds=\int_t^{t+\delta}\int_E\eta_s^{t,\zeta}(e)\tilde{\mu}(dsde).$$
Then, $$\widehat{Y}_s^{t,\zeta}=\Phi(\zeta+\theta_{t+\delta}^{t,\zeta})-\Phi(\zeta)-\int_t^{t+\delta}\int_E\eta_s^{t,\zeta}(e)\lambda(de)ds-\int_s^{t+\delta}\int_E\eta_s^{t,\zeta}(e)\tilde{\mu}(dsde).$$
From equation (\ref{equ6.4}), we have
\begin{equation}
\label{equ6.444}\left \{
\begin{array}
[c]{llll}%
d\widetilde{X}_s^{t,\zeta} & = & b(s,\widetilde{X}_s^{t,\zeta}+\theta_s^{t,\zeta}, \widetilde{Y}_s^{t,\zeta}+\widehat{Y}_s^{t,\zeta}+\Phi(\zeta), Z_s^{t,\zeta})ds + \sigma(s,\widetilde{X}_s^{t,\zeta}+\theta_s^{t,\zeta}, \widetilde{Y}_s^{t,\zeta}+\widehat{Y}_s^{t,\zeta}+\Phi(\zeta), Z_s^{t,\zeta})dB_{s}\\
&&+\int_{E}h(s,\widetilde{X}_{s-}^{t,\zeta}+\theta_s^{t,\zeta}, \widetilde{Y}_{s-}^{t,\zeta}+\widehat{Y}_{s-}^{t,\zeta}+\Phi(\zeta), Z_s^{t,\zeta},e)\lambda(de)ds,  \ \ \  s\in[t, t+\delta], & \\
d\widetilde{Y}_s^{t,\zeta} & = & -g(s,\widetilde{X}_s^{t,\zeta}+\theta_s^{t,\zeta}, \widetilde{Y}_s^{t,\zeta}+\widehat{Y}_s^{t,\zeta}+\Phi(\zeta),Z_s^{t,\zeta},\widetilde{K}_s^{t,\zeta}+\eta_s^{t,\zeta})ds + Z_s^{t,\zeta}dB_{s}+\int
_{E}\widetilde{K}_s^{t,\zeta}(e)\tilde{\mu}(dsde), \\
\widetilde{X}_t^{t,\zeta} & = & \zeta, \\
\widetilde{Y}_{ t+\delta}^{t,\zeta} & = & \Phi(\widetilde{X}_{t+\delta}^{t,\zeta}+\theta_{t+\delta}^{t,\zeta})-\Phi(\zeta+\theta_{t+\delta}^{t,\zeta})+\int_t^{t+\delta}\int_E\eta_s^{t,\zeta}(e)\lambda(de)ds. &
\end{array}
\right.
\end{equation}
For $(X^{t,\zeta},Y^{t,\zeta},Z^{t,\zeta},K^{t,\zeta})$, (\ref{ee6.33}) holds true, for any $\delta\in[0,\tilde{\delta}].$
For the backward part of equation (\ref{equ6.444}),
$$|\widetilde{Y}_s^{t,\zeta}|\leq E[|\widetilde{Y}_{t+\delta}^{t,\zeta}|+\int_t^{t+\delta}|g(r,X_r^{t,\zeta}, Y_r^{t,\zeta},Z_r^{t,\zeta},K_r^{t,\zeta})|dr\mid\mathcal {F}_s],\ \ s\in[t,t+\delta].$$
From (\ref{ee6.33}) and Doob's martingale inequality for $p>2$,
\begin{equation}\label{ee6.400}\begin{array}{llll}E[\sup\limits_{t\leq s\leq t+\delta}|\widetilde{Y}_s^{t,\zeta}|^p\mid\mathcal {F}_t]&\leq& C_pE[|\widetilde{Y}_{t+\delta}^{t,\zeta}|^p\mid\mathcal {F}_t]+C_p\delta^\frac{p}{2}E[(\int_t^{t+\delta}|g(r,X_r^{t,\zeta}, Y_r^{t,\zeta},Z_r^{t,\zeta},K_r^{t,\zeta})|^2dr)^\frac{p}{2}\mid\mathcal {F}_t]\\
&\leq& C_p\delta^\frac{p}{2}(1+|\zeta|^p)+C_pE[|\widetilde{Y}_{t+\delta}^{t,\zeta}|^p\mid\mathcal {F}_t].
\end{array}\end{equation}
We need to estimate $|\widetilde{Y}_{t+\delta}^{t,\zeta}|$, and notice \begin{equation}\label{ee3.33}|\widetilde{Y}_{t+\delta}^{t,\zeta}|\leq C|\widetilde{X}_{t+\delta}^{t,\zeta}-\zeta|+\int_t^{t+\delta}\int_E|\eta_s^{t,\zeta}(e)|\lambda(de)ds.\end{equation}
From (\ref{ee6.33}) and (\ref{ee6.3888}), we get
\begin{equation} \label{6.422}E[(\int_t^{t+\delta}\int_E|\eta_s^{t,\zeta}(e)|\lambda(de)ds)^p\mid\mathcal {F}_t]\leq C_p\delta^\frac{p}{2}(1+|\zeta|^p).
\end{equation}
On the other hand, from (\ref{ee6.33}), we have
\begin{equation}\label{ee6.4404}\begin{array}{llll}
&&E[|\int_t^{s}|b(s,X_{r}^{t,\zeta},Y_{r}^{t,\zeta},Z_{r}^{t,\zeta})|dr|^p\mid\mathcal {F}_t]
\leq  C_p\delta^\frac{p}{2}(1+|\zeta|^p),\\
&&E[|\int_t^{s}|\sigma(s,X_{r}^{t,\zeta},Y_{r}^{t,\zeta},Z_{r}^{t,\zeta})|dB_r|^p\mid\mathcal {F}_t]
\leq  C_p\delta^\frac{p}{2}(1+|\zeta|^p)+C_pL_\sigma^pE[(\int_t^{t+\delta}|Z_{r}^{t,\zeta}|^2dr)^\frac{p}{2}\mid\mathcal {F}_t],\\
&&E[(\int_t^{t+\delta}\int_E|h(s,X_{s-}^{t,\zeta},Y_{s-}^{t,\zeta},Z_{s}^{t,\zeta},e)|\lambda(de)ds)^p\mid\mathcal {F}_t]\\
&&\leq  C_pE[(\int_E(1\wedge|e|^2)\lambda(de))^p(\int_t^{t+\delta}(1+|X_{s}^{t,\zeta}|+|Y_{s}^{t,\zeta}|)ds)^p\mid\mathcal {F}_t]\leq C_p\delta^\frac{p}{2}(1+|\zeta|^p).
\end{array}
\end{equation}
From (\ref{equ6.444}) and the above estimates (\ref{ee6.4404}),
we know
\begin{equation}\label{ee6.445}\begin{array}{llll}
E[\sup\limits_{t\leq s\leq t+\delta}|\widetilde{X}_{t+\delta}^{t,\zeta}-\zeta|^p\mid\mathcal {F}_t]\leq  C_p\delta^\frac{p}{2}(1+|\zeta|^p)+C_pL_\sigma^pE[(\int_t^{t+\delta}|Z_{r}^{t,\zeta}|^2dr)^\frac{p}{2}\mid\mathcal {F}_t].
\end{array}
\end{equation}
From (\ref{ee6.400}), (\ref{ee3.33}), (\ref{6.422}) and (\ref{ee6.445}),
 \begin{equation}\label{ee6.488}
 E[\sup\limits_{t\leq s\leq t+\delta}|\widetilde{Y}_s^{t,\zeta}|^p\mid\mathcal {F}_t]
 \leq  C_p\delta^\frac{p}{2}(1+|\zeta|^p)+C_pL_\sigma^pE[(\int_t^{t+\delta}|Z_{r}^{t,\zeta}|^2dr)^\frac{p}{2}\mid\mathcal {F}_t].
 \end{equation}
 From Burkholder-Davis-Gundy inequality and (\ref{ee6.33}), we have
  \begin{equation}\label{ee6.499}\begin{array}{llll}
  &&E[(\int_t^{t+\delta}|Z_{r}^{t,\zeta}|^2dr)^\frac{p}{2}\mid\mathcal {F}_t]+E[(\int_t^{t+\delta}\int_E|\widetilde{K}_{r}^{t,\zeta}(e)|^2\mu(drde))^\frac{p}{2}\mid\mathcal {F}_t]\\
  &\leq&C_pE[\sup\limits_{t\leq s\leq t+\delta}|\int_t^sZ_{r}^{t,\zeta}dB_r+\int_t^s\int_E\widetilde{K}_{r}^{t,\zeta}(e)\tilde{\mu}(drde)|^p\mid\mathcal {F}_t]\\
  &\leq&C_pE[\sup\limits_{t\leq s\leq t+\delta}|\widetilde{Y}_s^{t,\zeta}|^p\mid\mathcal {F}_t]+C_pE[(\int_t^{t+\delta}|g(r,X_{r}^{t,\zeta},Y_{r}^{t,\zeta},Z_{r}^{t,\zeta},K_{r}^{t,\zeta})|dr)^p\mid\mathcal {F}_t]\\
  &\leq&C_pE[\sup\limits_{t\leq s\leq t+\delta}|\widetilde{Y}_s^{t,\zeta}|^p\mid\mathcal {F}_t]+C_p\delta^\frac{p}{2}E[(\int_t^{t+\delta}(1+|X_{r}^{t,\zeta}|^2+|Y_{r}^{t,\zeta}|^2+|Z_{r}^{t,\zeta}|^2)dr)^\frac{p}{2}\mid\mathcal {F}_t]\\
  &&+C_pE[(\int_t^{t+\delta}\int_E|K_{r}^{t,\zeta}(e)|(1\wedge|e|)\lambda(de)dr)^p\mid\mathcal {F}_t]\\
  &\leq& C_p\delta^\frac{p}{2}(1+|\zeta|^p)+C_pL_\sigma^pE[(\int_t^{t+\delta}|Z_{r}^{t,\zeta}|^2dr)^\frac{p}{2}\mid\mathcal {F}_t].
 \end{array}\end{equation}
 As $L_\sigma$ is sufficiently small, for $C_pL_\sigma^p<1,$ we have
 \begin{equation}\label{ee6.599}
  E[(\int_t^{t+\delta}|Z_{r}^{t,\zeta}|^2dr)^\frac{p}{2}\mid\mathcal {F}_t]+E[(\int_t^{t+\delta}\int_E|\widetilde{K}_{r}^{t,\zeta}(e)|^2\mu(drde))^\frac{p}{2}\mid\mathcal {F}_t]
  \leq  C_p\delta^\frac{p}{2}(1+|\zeta|^p).
 \end{equation}
 From (\ref{ee6.1111}), (\ref{6.422}), (\ref{ee6.599}) and $K_s^{t,\zeta}(e)=\widetilde{K}_s^{t,\zeta}(e)+\eta_s^{t,\zeta}(e)$, we get
 $$E[(\int_t^{t+\delta}\int_E|K_{r}^{t,\zeta}(e)|^2\lambda(de)dr)^\frac{p}{2}\mid\mathcal {F}_t]
  \leq  C_p\delta^\frac{p}{2}(1+|\zeta|^p).$$
 Therefore, the estimate (iii) is derived.

\end{proof}

\begin{remark}
If the initial state $\zeta=x\in \mathbb{R}^n$ is given, the terminal condition $\Phi$ becomes $\Phi(x)$, that is, FBSDE (\ref{equ6.4}) becomes the following
\begin{equation}
\label{equ6.12124}\left \{
\begin{array}
[c]{llll}%
dX_{s} & = & b(s,X_{s}, Y_{s}, Z_{s})ds + \sigma(s,X_{s}, Y_{s}, Z_{s}%
)dB_{s}+\int_{E}h(s,X_{s-}, Y_{s-}, Z_{s},e)\tilde{\mu
}(dsde), & \\
dY_{s} & = & -g(s,X_{s}, Y_{s},Z_{s},K_{s})ds + Z_{s}dB_{s}+\int
_{E}K_{s}(e)\tilde{\mu}(dsde),\ s \in[t, t+\delta], & \\
X_{t} & = & x, & \\
Y_{ t+\delta} & = & \Phi(x), &
\end{array}
\right.
\end{equation}
then Theorem \ref{pro6.4} still holds.

Indeed, from Lemma \ref{le2.2}, FBSDE (\ref{equ6.12124}) has a unique solution $(X,Y,Z,K)$. We consider the following FBSDE:
\begin{equation}\left \{
\begin{array}
[c]{llll}%
d\widehat{X}_{s} & = & b(s,\widehat{X}_{s}, \widehat{Y}_{s}+\Phi(x), \widehat{Z}_{s})ds + \sigma(s,\widehat{X}_{s}, \widehat{Y}_{s}+\Phi(x), \widehat{Z}_{s}
)dB_{s}+\int_{E}h(s,\widehat{X}_{s-}, \widehat{Y}_{s-}+\Phi(x), \widehat{Z}_{s},e)\tilde{\mu
}(dsde), & \\
d\widehat{Y}_{s} & = & -g(s,\widehat{X}_{s}, \widehat{Y}_{s}+\Phi(x),\widehat{Z}_{s},\widehat{K}_{s})ds + \widehat{Z}_{s}dB_{s}+\int
_{E}\widehat{K}_{s}(e)\tilde{\mu}(dsde),\ s \in[t, t+\delta], & \\
\widehat{X}_{t} & = & x, & \\
\widehat{Y}_{ t+\delta} & = & 0.&
\end{array}
\right.
\end{equation}
From  Lemma \ref{le2.2}, we know $(X,Y,Z,K)=(\widehat{X},\widehat{Y}+\Phi(x),\widehat{Z},\widehat{K})$. For $(\widehat{X},\widehat{Y},\widehat{Z},\widehat{K})$, Theorem \ref{pro6.4}  holds, which means those estimates in Theorem \ref{pro6.4} still holds for $(X,Y,Z,K)$.

\end{remark}

\begin{proposition}
\label{pro6.5} Suppose that $(b_{i},\sigma_{i},g_{i},\Phi_{i}),\
i=1,2,$ all satisfy the assumptions $( \mathbf{H2.1}),\ (
\mathbf{H3.1}),\ ( \mathbf{H3.3})$. Then from Theorem \ref{th6.2} there exists a constant $0<
\delta_{0}$, only depending on the Lipschitz constants $K,\
L_\sigma$ and $L_h(\cdot)$, such that for  $0\leq \delta \leq
\delta_{0}$, and
the same initial state $\zeta \in L^{2}(\Omega,\mathcal{F}_{t},P;\mathbb{R}%
^{n})$, $(X^{i}_{s}, Y^{i}_{s}, Z^{i}_{s})_{s\in[t, t+\delta]}$ is the
solution of FBSDE (\ref{equ6.1}) associated with $(b_{i},\sigma_{i},g_{i}%
,\Phi_{i})$ on the time interval $[t, t+\delta],\ i=1,2$. It follows that there exists a constant $\delta_{1}>0,$ such that for every
$0\leq \delta \leq \delta_{1},$
\[%
\begin{array}
[c]{llll}
&&|Y^{1}_{t}-Y^{2}_{t}|^{2}\\
& &\leq  CE[|\Phi_{1}(t+\delta,X_{t+\delta}%
^{1})-\Phi_{2}(t+\delta,X_{t+\delta}^{1})|^{2}\mid \mathcal{F}_t] +C\delta E[\int_{t}^{t+\delta}|(b_{1}-b_{2})(s,X_{s}%
^{1},Y_{s}^{1},Z_{s}^{1},K_{s}^{1})|^{2}ds\mid \mathcal{F}_{t}] & \\
&  & +C E[\int_{t}^{t+\delta}|(\sigma_{1}-\sigma_{2}%
)(s,X_{s}^{1},Y_{s}^{1},Z_{s}^{1},K_{s}^{1})|^{2}ds\mid \mathcal{F}_{t}]+C\delta E[\int_{t}^{t+\delta}|(g_{1}-g_{2})(s,X_{s}%
^{1},Y_{s}^{1},Z_{s}^{1},K_{s}^{1})|^{2}ds\mid \mathcal{F}_{t}] & \\
&  & +C E[\int_{t}^{t+\delta}\int_{E}|(h_{1}%
-h_{2})(s,X_{s}^{1},Y_{s}^{1},Z_{s}^{1},K_{s}^{1}(e),e)|^{2}\lambda
(de)ds\mid \mathcal{F}_{t}],\
\mbox{P-a.s.} &
\end{array}
\]

\end{proposition}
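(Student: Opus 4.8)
The plan is to reduce the claim to the usual a priori estimates for the linear FBSDE with jumps solved by the difference of the two solutions. Set $\widehat{X}:=X^1-X^2$, $\widehat{Y}:=Y^1-Y^2$, $\widehat{Z}:=Z^1-Z^2$, $\widehat{K}:=K^1-K^2$, write $\Pi^i_s:=(X^i_s,Y^i_s,Z^i_s)$, and observe that $(\widehat{X},\widehat{Y},\widehat{Z},\widehat{K})$ solves, on $[t,t+\delta]$, a fully coupled FBSDE with jumps whose coefficients and terminal value are of the form $l_1(s,\Pi^1_s,K^1_s)-l_2(s,\Pi^2_s,K^2_s)$ (for $l=b,\sigma,g,h$) and $\Phi_1(t+\delta,X^1_{t+\delta})-\Phi_2(t+\delta,X^2_{t+\delta})$. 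I would split each of these as $(l_1-l_2)(s,\Pi^1_s,K^1_s)+\bigl(l_2(s,\Pi^1_s,K^1_s)-l_2(s,\Pi^2_s,K^2_s)\bigr)$ (and analogously for $\Phi$): the first piece is exactly the coefficient difference appearing on the right-hand side of the assertion, while the second piece is controlled, by $(\mathbf{H2.1})$ and $(\mathbf{H3.3})$, by $C(|\widehat{X}_s|+|\widehat{Y}_s|+|\widehat{Z}_s|+|\widehat{K}_s|)$ for $b,g$, by $K(|\widehat{X}_s|+|\widehat{Y}_s|)+L_{\sigma}(|\widehat{Z}_s|+|\widehat{K}_s|)$ for $\sigma$, by $\rho(e)(|\widehat{X}_{s-}|+|\widehat{Y}_{s-}|)+L_h(e)(|\widehat{Z}_s|+|\widehat{K}_s|)$ for $h$, and by $C|\widehat{X}_{t+\delta}|$ for the $\Phi_2$ part of the terminal value. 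All estimates are taken conditionally on $\mathcal{F}_t$, exactly as in Proposition \ref{pro6.1}.

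First I would treat the forward equation: applying It\^o's formula to $|\widehat{X}_s|^2$, the Burkholder--Davis--Gundy inequality to the Brownian and the compensated Poisson integrals, Cauchy--Schwarz to the drift integral, and Gronwall's inequality over $[t,t+\delta]$ to absorb the $|\widehat{X}|^2$-term, one gets
\[
E\bigl[\sup_{t\le s\le t+\delta}|\widehat{X}_s|^2\mid\mathcal{F}_t\bigr]\le C\,\mathcal{I}_{b,\sigma,h}+C\delta\,E\bigl[\sup_{t\le s\le t+\delta}|\widehat{Y}_s|^2\mid\mathcal{F}_t\bigr]+C(\delta+L_{\sigma}^2+\tilde{C}_h)\,E\Bigl[\int_t^{t+\delta}\!\!\bigl(|\widehat{Z}_r|^2+\int_E|\widehat{K}_r(e)|^2\lambda(de)\bigr)dr\mid\mathcal{F}_t\Bigr],
\]
where $\mathcal{I}_{b,\sigma,h}$ denotes $E[\int_t^{t+\delta}(\delta|(b_1-b_2)(s,\Pi^1_s,K^1_s)|^2+|(\sigma_1-\sigma_2)(s,\Pi^1_s,K^1_s)|^2+\int_E|(h_1-h_2)(s,\Pi^1_{s-},K^1_s(e),e)|^2\lambda(de))ds\mid\mathcal{F}_t]$; the extra factor $\delta$ multiplies only the $b_1-b_2$ term (it comes from Cauchy--Schwarz on $ds$), while $\sigma_1-\sigma_2$ and $h_1-h_2$ enter through an isometry and carry no $\delta$. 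Then I would treat the backward equation: applying It\^o's formula to $e^{\beta s}|\widehat{Y}_s|^2$ with $\beta$ large, together with BDG, gives the standard bound $|\widehat{Y}_t|^2+E[\sup_{t\le s\le t+\delta}|\widehat{Y}_s|^2\mid\mathcal{F}_t]+E[\int_t^{t+\delta}(|\widehat{Z}_r|^2+\int_E|\widehat{K}_r(e)|^2\lambda(de))dr\mid\mathcal{F}_t]\le C\,E[|\widehat{Y}_{t+\delta}|^2\mid\mathcal{F}_t]+C\delta\,E[\int_t^{t+\delta}|(g_1-g_2)(s,\Pi^1_s,K^1_s)|^2ds\mid\mathcal{F}_t]+C\,E[\int_t^{t+\delta}|\widehat{X}_r|^2dr\mid\mathcal{F}_t]$, combined with $E[|\widehat{Y}_{t+\delta}|^2\mid\mathcal{F}_t]\le C\,E[|(\Phi_1-\Phi_2)(t+\delta,X^1_{t+\delta})|^2\mid\mathcal{F}_t]+C\,E[\sup_{t\le s\le t+\delta}|\widehat{X}_s|^2\mid\mathcal{F}_t]$.

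Finally I would couple the two estimates: substitute the backward bounds on $E[\sup|\widehat{Y}_s|^2\mid\mathcal{F}_t]$ and on the $(\widehat{Z},\widehat{K})$-norm into the forward estimate, use $E[\int_t^{t+\delta}|\widehat{X}_r|^2dr\mid\mathcal{F}_t]\le\delta\,E[\sup|\widehat{X}_s|^2\mid\mathcal{F}_t]$, and choose $\delta_1\le\delta_0$ small enough so that, using also the smallness of $L_{\sigma}$ and $\tilde{C}_h$ from $(\mathbf{H3.3})$, the total coefficient in front of $E[\sup|\widehat{X}_s|^2\mid\mathcal{F}_t]$ is at most $\tfrac12$. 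Absorbing that term yields $E[\sup|\widehat{X}_s|^2\mid\mathcal{F}_t]\le C\,\mathcal{I}_{b,\sigma,h}+C\,E[|(\Phi_1-\Phi_2)(t+\delta,X^1_{t+\delta})|^2\mid\mathcal{F}_t]+C\delta\,E[\int_t^{t+\delta}|(g_1-g_2)(s,\Pi^1_s,K^1_s)|^2ds\mid\mathcal{F}_t]$, and feeding this back into the backward estimate gives the stated bound for $|\widehat{Y}_t|^2=|Y^1_t-Y^2_t|^2$. The main obstacle I anticipate is closing this forward--backward absorption argument --- precisely where the short horizon and the smallness of the $z,k$-Lipschitz constants of $\sigma$ and $h$ are indispensable, as in Theorem \ref{pro6.3}; a secondary point requiring care is keeping the jump term $h_1-h_2$ with its $\lambda(de)\,ds$-norm (and no $\delta$) throughout the forward estimate for $\widehat{X}$, and making sure the constants $\tilde{C}_h=\max(\sup_{e\in E}L_h^2(e),\int_E L_h^2(e)\lambda(de))$ multiply the $(\widehat{Z},\widehat{K})$-terms there, just as in the proof of Theorem \ref{pro6.3}.
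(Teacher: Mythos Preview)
Your proposal is correct and follows the standard stability argument for fully coupled FBSDEs on a short horizon, which is precisely the approach the paper intends: the paper does not spell out the proof of Proposition~\ref{pro6.5} but refers to Proposition~6.6 in \cite{LW}, and your scheme --- splitting each coefficient as $(l_1-l_2)(s,\Pi^1_s,K^1_s)+(l_2(s,\Pi^1_s,K^1_s)-l_2(s,\Pi^2_s,K^2_s))$, deriving a forward estimate for $\widehat{X}$ and a backward estimate for $(\widehat{Y},\widehat{Z},\widehat{K})$, and then closing the loop via the smallness of $\delta$, $L_\sigma$, and $\tilde{C}_h$ exactly as in the contraction of Theorem~\ref{pro6.3} --- is the natural jump-adapted version of that argument. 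The points you flag as requiring care (the absorption step and the handling of the $h$-term with constant $\tilde{C}_h$) are indeed the only delicate places, and you have identified them correctly.
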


For the proof, it is similar to the proof of  Proposition 6.6 in Li, Wei
\cite{LW}.

\begin{remark}
When $(b_{1},\sigma_{1},h_1, f_{1})=(b_{2},\sigma_{2},h_2,f_{2})$ in
Proposition \ref{pro6.5}, we have
\[
|Y^{1}_{t}-Y^{2}_{t}|\leq C(E[|\Phi_{1}(t+\delta,X^{1}_{t+\delta})-\Phi
_{2}(t+\delta,X^{1}_{t+\delta})|^{2}|\mathcal{F}_{t}])^{{\frac{1}{2}}
},\  \mbox{P-a.s.}
\]
\end{remark}

\begin{corollary}
Under the assumptions $( \mathbf{H2.1}),\ ( \mathbf{H3.1}),\ ( \mathbf{H3.3}%
)$, there exists a constant $0< \delta_{0}$, only depending on the
Lipschitz constants $K,\ L_\sigma$ and $L_h(\cdot)$,\ such that for
every $0\leq \delta \leq \delta_{0}$, $\zeta \in
L^{2}(\Omega,\mathcal{F}_{t},P;\mathbb{R}^{n})$ and $\varepsilon>0$,
if $(X^{t, \zeta}_{s}, Y^{t, \zeta}_{s}, Z^{t, \zeta}_{s}, K^{t,
\zeta}_{s})_{s\in[t, t+\delta]}$ is the solution of FBSDE
(\ref{equ6.1}) associated with $(b, \sigma, f, \zeta, \Phi)$, and
$(\overline{X}^{t, \zeta }_{s}, \overline{Y}^{t, \zeta}_{s},
\overline{Z}^{t, \zeta}_{s}, \overline {K}^{t, \zeta}_{s})_{s\in[t,
t+\delta]}$ is that of FBSDE (\ref{equ6.1}) associated with $(b,
\sigma, f, \zeta, \Phi+\varepsilon)$\ on the time interval $[t,
t+\delta]$, then we have that
\[
|Y^{t, \zeta}_{t}-\overline{Y}^{t, \zeta}_{t}|\leq C\varepsilon
,\  \mbox{P-a.s.}
\]
\end{corollary}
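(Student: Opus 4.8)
The plan is to obtain the estimate as an immediate specialization of Proposition \ref{pro6.5}, in fact of the Remark following it. First I would note that the two FBSDEs in the statement share the same coefficients $b,\ \sigma,\ h,\ f$ and the same initial datum $\zeta$, differing only in their terminal functions: one has $\Phi$, the other $\Phi+\varepsilon$. Hence the hypotheses of Proposition \ref{pro6.5} hold with $(b_1,\sigma_1,h_1,f_1)=(b_2,\sigma_2,h_2,f_2)=(b,\sigma,h,f)$, $\Phi_1=\Phi$, $\Phi_2=\Phi+\varepsilon$, so that $(X^1,Y^1,Z^1,K^1)=(X^{t,\zeta},Y^{t,\zeta},Z^{t,\zeta},K^{t,\zeta})$ and $(X^2,Y^2,Z^2,K^2)=(\overline{X}^{t,\zeta},\overline{Y}^{t,\zeta},\overline{Z}^{t,\zeta},\overline{K}^{t,\zeta})$. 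The threshold $\delta_0$ (from Theorem \ref{th6.2}) and the constants $\delta_1$ and $C$ furnished by Proposition \ref{pro6.5} depend only on the Lipschitz constants $K,\ L_\sigma,\ L_h(\cdot)$ and the linear growth constant $L$ of the coefficients, which are not altered by adding the constant $\varepsilon$ to $\Phi$; in particular they are independent of $\varepsilon$.

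Next I would invoke the Remark right after Proposition \ref{pro6.5}, which, in the case of coinciding coefficients, gives for every $0\le\delta\le\delta_1$
\[
|Y^{t,\zeta}_t-\overline{Y}^{t,\zeta}_t|\le C\Big(E\big[\,|\Phi_1(t+\delta,X^{t,\zeta}_{t+\delta})-\Phi_2(t+\delta,X^{t,\zeta}_{t+\delta})|^2\mid\mathcal{F}_t\big]\Big)^{1/2},\quad\mbox{P-a.s.}
\]
Since $\Phi_1-\Phi_2\equiv-\varepsilon$ is a deterministic constant, the integrand on the right equals $\varepsilon^2$ identically, so the conditional expectation is $\varepsilon^2$, and therefore $|Y^{t,\zeta}_t-\overline{Y}^{t,\zeta}_t|\le C(\varepsilon^2)^{1/2}=C\varepsilon$, P-a.s., which is the claimed bound.

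There is essentially no real obstacle in this argument; it is a direct corollary. The only point deserving a word of care is the uniformity of the constant $C$ and of the threshold $\delta_1$ in $\varepsilon$: this is exactly why one must check, as above, that shifting $\Phi$ by a constant changes neither its Lipschitz constant nor (after a harmless adjustment of $L$) the linear growth bound, so that Proposition \ref{pro6.5} applies with constants that do not depend on $\varepsilon$.
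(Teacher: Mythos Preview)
Your proposal is correct and follows exactly the approach the paper intends: the Corollary is placed immediately after the Remark to Proposition~\ref{pro6.5} and is meant as a direct specialization of that Remark with $\Phi_1-\Phi_2\equiv-\varepsilon$. Your additional observation that the threshold $\delta_0$ and the constant $C$ are unaffected by the shift $\Phi\mapsto\Phi+\varepsilon$ (since only Lipschitz constants of $\Phi$ enter, and these are unchanged) is the only point worth making explicit, and you handle it correctly.
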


 \end{document}